\definecolor{refgreen}{rgb}{0,0.5,0}
\journalname{...}
\date{ \phantom{b} \vspace{45mm}\phantom{e}}
\def\half{\frac{1}{2}}
\newcommand\bfd{{\mathbf d}}
\newcommand\bfe{{\mathbf e}}
\newcommand\bff{{\mathbf f}}
\newcommand\bfg{{\mathbf g}}
\newcommand\bfn{{\mathbf n}}
\newcommand\bfu{{\mathbf u}}
\newcommand\bfv{{\mathbf v}}
\newcommand\bfw{{\mathbf w}}
\newcommand\bfx{{\mathbf x}}
\newcommand\bfy{{\mathbf y}}
\newcommand\bfz{{\mathbf z}}
\newcommand\bfA{{\mathbf A}}
\newcommand\bfH{{\mathbf H}}
\newcommand\bfF{{\mathbf F}}
\newcommand\bfK{{\mathbf K}}
\newcommand\bfM{{\mathbf M}}
\newcommand\bfV{{\mathbf V}}
\newcommand\bfzero{{\mathbf 0}}
\newcommand\calE{{\mathcal E}}
\newcommand\calP{{\mathcal P}}
\newcommand\andquad{\quad\hbox{ and }\quad}
\newcommand\for{\quad\hbox{ for }\quad}
\renewcommand{\d}{\textnormal{d}}
\newcommand{\D}{\mathcal{D}}
\newcommand{\Ga}{\Gamma}
\newcommand{\laplace}{\Delta}
\newcommand{\nbg}{\nabla_{\Gamma}}
\newcommand{\nbgh}{\nabla_{\Gamma_h}}
\newcommand{\mat}{\partial^{\bullet}}
\newcommand{\diff}{\frac{\d}{\d t}}
\newcommand{\eps}{\varepsilon}
\newcommand{\nb}{\nabla}
\newcommand{\pa}{\partial}
\newcommand{\R}{\mathbb{R}}
\newcommand{\spn}{\textnormal{span}}
\def \t {(t)}
\def \ct {(\cdot,t)}
\def \s {(s)}
\def \to {\rightarrow}
\newcommand{\vphi}{\varphi}
\newcommand{\Ih}{\widetilde{I}_h}
\newcommand{\Id}{\mathrm{Id}}
\newcommand{\us}{\bfu^\ast}
\newcommand{\vs}{\bfv^\ast}
\newcommand{\xs}{\bfx^\ast}
\newcommand{\ws}{\bfw^\ast}
\newcommand{\Vs}{\bfV^\ast}
\newcommand{\dotus}{\dot\bfu^\ast}
\newcommand{\dotxs}{\dot\bfx^\ast}
\newcommand{\dotws}{\dot\bfw^\ast}
\newcommand{\eu}{\bfe_\bfu}
\newcommand{\ev}{\bfe_\bfv}
\newcommand{\ex}{\bfe_\bfx}
\newcommand{\ew}{\bfe_\bfw}
\newcommand{\doteu}{\dot\bfe_\bfu}
\newcommand{\dotex}{\dot\bfe_\bfx}
\newcommand{\dotew}{\dot\bfe_\bfw}
\newcommand{\du}{\bfd_\bfu}
\newcommand{\dv}{\bfd_\bfv}
\newcommand{\dw}{\bfd_\bfw}
\newcommand{\Rh}{\widetilde{R}_h}
\newtheorem{result}{Lemma}[section]
\newcommand{\n}{\nu}
\newcommand{\N}{{\mathbb N}}
\newcommand{\dof}{N}
\newcommand{\redoff}{\color{black}}
\newcommand{\blueon}{\color{black}}
\newcommand{\blueoff}{\color{black}}
\newcommand{\bbk}{\color{black}}
\newcommand{\ebk}{\color{black}}
\begin{document}

\title{Numerical analysis for the interaction of mean curvature flow and diffusion on closed surfaces}

\titlerunning{Numerical analysis for the interaction of mean curvature flow and diffusion}        

\author{Charles M. Elliott \and Harald Garcke \and Bal\'azs Kov\'acs}

\authorrunning{C.~M.~Elliott, H.~Garcke, and B.~Kov\'acs} 

\institute{
	C. M. Elliott \at
	Mathematics Institute, University of Warwick \\
	Zeeman Building, Coventry CV4 7AL, UK \\
	\email{C.M.Elliott@warwick.ac.uk}
	\and
	H. Garcke and B. Kov\'{a}cs \at
	Faculty of Mathematics, University of Regensburg, \\
	Universit{\"a}tsstr. 31, 93040 Regensburg, Germany, \\
	\email{\{harald.garcke, balazs.kovacs\}@ur.de}\\
}

\date{\today}

\maketitle

\begin{abstract}
	An evolving surface finite element discretisation is analysed for the evolution of a closed two-dimensional surface governed by a system coupling a generalised forced mean curvature flow and a reaction--diffusion process on the surface, inspired by a gradient flow of a coupled energy. Two algorithms are proposed, both based on a system coupling the diffusion equation to evolution equations for  geometric quantities in the velocity law for the surface.
	One of the numerical methods is proved to be convergent in the $H^1$ norm with optimal-order for finite elements of degree at least two.
	We present numerical experiments illustrating the convergence behaviour and demonstrating the qualitative properties of the flow: preservation of mean convexity, loss of convexity, weak maximum principles, and the occurrence of self-intersections.
\end{abstract}

\section{Introduction}
\label{section:intro}

In this paper we propose and analyse an evolving  surface finite element semi-discretisation of a geometric partial differential equation (PDE) system that couples a forced mean curvature flow to a diffusion equation on the surface.  The unknowns are 
 a time dependent   two-dimensional closed, orientable, immersed surface $\Ga \subset\R^3$, and a time and spatially varying  surface concentration $u$. 

The coupled mean curvature--diffusion flow  system is
\begin{subequations}
\label{eq:MCFdiff}
	\begin{align}
		v = &\ - F(u,H) \nu, \label{mcfeqn}\\
	\mat u =&\ -(\nabla_{\Gamma}\cdot v) u + \nb_\Ga \cdot \blueon \big( \D(u) \nb_\Ga u \big), \blueoff  \label{diffeqn}
\end{align}
\end{subequations}
where $F$ and $\blueon \D$ are  given sufficiently smooth functions. Associated with the surface $\Ga$ are the geometric quantities  the mean curvature $H$, the oriented  continuous unit  normal field of the surface $\nu$, and $v$ the velocity of the evolving surface $\Ga$, where $V = v \cdot \nu$ denotes the normal velocity. In the case that $\Ga$ encloses a domain we always choose the unit outward pointing normal field.

A special case, inspiring this work, \blueon  with $F(u,H)=g(u)H$, $\D(u)=G''(u)$ where $g(u) = G(u) - G'(u)u$ and $G(\cdot)$ is given arises as   \blueoff the  $(L^2,H^{-1})$-gradient flow of the coupled energy, \cite{diss_Buerger,ABG_MCFdiff}, 
\begin{equation}
\label{eq:surface energy functional - intro}
	\calE = \calE(\Ga,u) = \int_\Ga G(u) ,
\end{equation}
yielding
\begin{subequations}
\label{eq:MCF diff - gradient flow}
	\begin{align}
		v = &\ - g(u) H \nu, \\
		\mat u = &\ -(\nabla_{\Gamma}\cdot v) u +\nb_\Ga \cdot \blueon \big( G''(u) \nb_\Ga u \big). \blueoff  	\end{align}
\end{subequations}

It is important to note that \eqref{eq:MCFdiff} contains not only the gradient flow  of \cite{diss_Buerger,ABG_MCFdiff} as a special case, but numerous other geometric flows as well. Examples are pure mean curvature flow \cite{Huisken1984}, the generalised mean curvature flows $v=-V(H)\nu$, see, e.g., \cite{HuiskenPolden},  examples in \cite{MCF_generalised} and \cite{BGN_survey}, additively forced mean curvature flow \cite{BarreiraElliottMadzvamuse2011,CGG}, and \cite{MCF_soldriven} (see also the references therein).  Also it arises as a sub-system  in  coupled bulk--surface models such as that for tumour growth considered in \cite{EylesKingStyles2019}.

\subsection{\bf Notation for evolving hypersurfaces}

We adopt  commonly used  notation for  surface and  geometric partial differential equations. Our setting is that the evolution  takes an initial $C^k$ hypersurface  $\Gamma^0\subset \mathbb R^3$ and an initial distribution $u^0 \colon \Gamma^0\rightarrow \mathbb R$ and evolves the surface 
 so that $\Gamma(t)\equiv\Ga[X]\subset\R^3$ is the image
$$
	\Ga[X] \equiv \Ga[X(\cdot,t)] = \{ X(p,t) \mid p \in \Ga^0 \}, \quad X(\cdot,0) 
	= \Id_{\Ga^0} $$
of a smooth mapping $X\colon \Ga^0\times [0,T]\to \R^3$ such that $X(\cdot,t)$ is the parametrisation of an orientable, immersed hypersurface  for every $t$.
We denote by  $v(x,t)\in\R^3$ at a point $x=X(p,t)\in\Ga[X(\cdot,t)]$ the   \emph{velocity} defined by \begin{equation}
\label{eq:velocity ODE}
	v(X(p,t),t)=\partial_t X(p,t).
\end{equation}
For a function $\eta(x,t)$ ($x\in \Ga[X]$, $0\le t \le T$) we denote the \emph{material derivative} (with respect to the parametrization $X$) as
$$
	\mat \eta(x,t) = \frac \d{\d t} \,\eta(X(p,t),t) \quad\hbox{ for } \ x=X(p,t).
$$
 On any regular surface $\Ga\subset\R^3$, we denote by $\nabla_{\Ga}\eta\colon \Ga\to\R^3$ the  \emph{tangential gradient} of a function $\eta\colon \Ga\to\R$, and in the case of a vector-valued function $\eta=(\eta_1,\eta_2,\eta_3)^T\colon \Ga\to\R^3$, we let
$\nabla_{\Ga}\eta=
(\nabla_{\Ga}\eta_1,
\nabla_{\Ga}\eta_2,
\nabla_{\Ga}\eta_3)$. We thus use the convention that the gradient of $\eta$ has the gradient of the components as column vectors, (in agreement with gradient of a scalar function is a column vector). 
We denote by $\nabla_{\Ga} \cdot \eta = \text{tr}(\nbg \eta)$ the \emph{surface divergence} of a vector field $\eta$ on $\Ga$, 
and by 
$\varDelta_{\Ga} \eta=\nabla_{\Ga}\cdot \nabla_{\Ga}\eta$ the \emph{Laplace--Beltrami operator} applied to $\eta\colon \Ga\to\R$; see the review \cite{DeckelnickDE2005} or \cite[Appendix~A]{Ecker2012}, or any textbook on differential geometry for these notions. 


We suppose that $\Gamma\t$ is an orientable, immersed hypersurface for all $t$.
In the case that $\Gamma$ is  the boundary of a bounded open set $\Omega
\subset \mathbb R^3$ we orient the  unit  normal vector field
$\n\colon\Ga\to\R^3$ to point out of $\Omega$.  The  surface gradient of the
normal field contains the (extrinsic) curvature data of the surface $\Ga$. At every $x\in\Ga$, the matrix of the extended Weingarten map,
$$
	A(x)=\nabla_\Ga \n(x) , 
$$ 
is symmetric and of size $3\times 3$ (see, e.g., \cite[Proposition~20]{Walker2015}). Apart from the eigenvalue $0$ (with eigenvector $\n$), its other two eigenvalues are the principal curvatures $\kappa_1$ and $\kappa_2$. They determine the fundamental quantities
\begin{align}
\label{eq:def - H and A2}
	H:={\rm tr}(A)=\nabla_\Gamma\cdot \nu=\kappa_1+\kappa_2, \qquad |A|^2 = \kappa_1^2 +\kappa_2^2 ,
\end{align}
where $|A|$ denotes the Frobenius norm of the matrix $A$.
Here, the mean curvature $H$ is, as in most of the literature, taken without the factor 1/2. 
In this setting, the mean curvature of a sphere is positive. 

For an evolving surface $\Ga$ with normal velocity $v=V\nu$, using that $\nbg f \cdot \nu = 0$ for any function $f$, we have the fundamental equation
\begin{equation}
\label{eq:divergence of velocity}
	\nabla_\Gamma\cdot v = \nabla_\Gamma\cdot (V\nu) = \nbg V \cdot \nu + V \nbg \cdot \nu = V H.
\end{equation}


The following   geometric identities hold for any sufficiently smooth evolving surface $\Gamma(t)$, (see for example  \cite{Huisken1984,Ecker2012,BGN_survey}):
\begin{align} 
	\nabla_{\Ga} H =&\ \Delta_{\Ga} \nu + |A|^2 \nu, \label{eq:Hui2}\\
	\mat \nu = &\ - \nb_{\Ga} V, \label{eq:Hui1}\\
	\mat H = &\ - \Delta_{\Ga} V -  |A|^2 V \label{eq:Hui3}.
\end{align}
They  are fundamental in the derivation of the system  of evolution equations discretised in this paper.

\subsection{\bf Our approach}
\label{section:approach}

The key idea of our approach is that it is based on a system of  evolution equations  coupling the two equations \eqref{mcfeqn} and \eqref{diffeqn} to parabolic equations for geometric variables in the velocity law. This approach was first used for mean curvature flow \cite{MCF}. The system is derived using the geometric identities 
\eqref{eq:Hui2}, \eqref{eq:Hui1} and \eqref{eq:Hui3}. Using the notation 
$\partial_i\cdot,\; i=1,2$  for appropriate partial derivatives, we prove the following lemma.

\begin{lemma}
\label{lemma:evolution equations}
	Let $\Ga[X]$ and $u$ be sufficiently smooth solutions of the  equations  \eqref{mcfeqn}--\eqref{diffeqn}. Suppose that $F,K:\mathbb R^2\rightarrow \mathbb R$ are sufficiently smooth, satisfy
	\begin{equation}
	\label{eq:velocity law-alt}
		r=-F(s,q) \quad \Longleftrightarrow \quad q=-K(s,r) , \qquad \forall r,s,q\in \mathbb R,
	\end{equation}
	\blueon and in addition assume that $\pa_2 F(u,H)$ is positive. \blueoff 
Then the normal vector $\nu$, the mean curvature $H$ and the normal velocity $V$ satisfy  the two following systems of  non-linear parabolic evolution equations:
	\begin{align}
		\mat H = &\  \laplace_{\Ga[X]} \big( F(u,H) \big) +  |A|^2 F(u,H), \label{H_F}\\
		\frac{1}{\partial_2F(u,H)} \, \mat \nu = &\  \laplace_{\Ga[X]} \nu +  |A|^2 \nu +\frac{\partial_1F(u,H)}{ \partial_2F(u,H)}  \nb_{\Ga[X]} u, \label{normalF}
		\end{align}
and		  
 	\begin{align}
		 \pa_2 K(u,V) \, \mat V = &\  \laplace_{\Ga[X]} V  +  |A|^2 V  - \pa_1 K(u,V) \, \mat u , \label{V_K}\\
                 \partial_2K(u,V) \, \mat \nu = &\  \laplace_{\Ga[X]} \nu +  |A|^2 \nu + \partial_1K(u,V)  \nb_{\Ga[X]} u \label{normalK}. 
			\end{align} 

\end{lemma}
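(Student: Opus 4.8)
The plan is to derive all four evolution equations by differentiating the velocity law and inserting the geometric identities \eqref{eq:Hui2}, \eqref{eq:Hui1}, \eqref{eq:Hui3}, together with the chain rule for tangential and material derivatives. First I would put the velocity law \eqref{mcfeqn} into scalar form: since $v=-F(u,H)\nu$ and $|\nu|=1$, the normal velocity is $V=v\cdot\nu=-F(u,H)$, and by the equivalence \eqref{eq:velocity law-alt} (applied with $s=u$, $q=H$, $r=V$) this is precisely $H=-K(u,V)$. It is also worth differentiating the defining identity $r=-F(s,-K(s,r))$ in $r$ and in $s$: this gives $\partial_2F(u,H)\,\partial_2K(u,V)=1$ and $\partial_1K(u,V)=\partial_1F(u,H)/\partial_2F(u,H)$ along the solution, which both shows $\partial_2F(u,H)\neq0$ — so that the divisions in \eqref{normalF} and the coefficients in \eqref{V_K}, \eqref{normalK} are meaningful — and makes transparent that the two systems are merely two parametrisations of the same equations.

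For the $H$-equation \eqref{H_F} I would simply substitute $V=-F(u,H)$ into \eqref{eq:Hui3}. For the normal equation \eqref{normalF}, identity \eqref{eq:Hui1} gives $\mat\nu=-\nbg V=\nbg\big(F(u,H)\big)$; applying the tangential chain rule $\nbg\big(F(u,H)\big)=\partial_1F(u,H)\,\nbg u+\partial_2F(u,H)\,\nbg H$, replacing $\nbg H$ by $\laplace_\Ga\nu+|A|^2\nu$ via \eqref{eq:Hui2}, and finally dividing by $\partial_2F(u,H)$, yields \eqref{normalF}. The $K$-system is obtained in the same way from $H=-K(u,V)$: taking the material derivative, $\mat H=-\partial_1K(u,V)\,\mat u-\partial_2K(u,V)\,\mat V$, and equating with \eqref{eq:Hui3} gives \eqref{V_K}; taking the tangential gradient, $\nbg H=-\partial_1K(u,V)\,\nbg u-\partial_2K(u,V)\,\nbg V$, inserting $\nbg H=\laplace_\Ga\nu+|A|^2\nu$ from \eqref{eq:Hui2} and $\nbg V=-\mat\nu$ from \eqref{eq:Hui1}, and rearranging gives \eqref{normalK}.

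The computation is otherwise routine, and no fixed-point or implicit-function argument is needed since $K$ is supplied by hypothesis; the only mild obstacle is bookkeeping — checking that the assumed regularity of $\Ga[X]$ and $u$ suffices to justify the two tangential derivatives of $\nu$ and the material derivatives of $H$ and $V$ used above, and verifying the non-degeneracy $\partial_2F(u,H)\neq0$ along the solution, which, as noted, is forced by the bijectivity encoded in \eqref{eq:velocity law-alt}.
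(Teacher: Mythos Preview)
Your proof is correct and follows essentially the same approach as the paper: substitute $V=-F(u,H)$ (respectively $H=-K(u,V)$) into the geometric identities \eqref{eq:Hui1}--\eqref{eq:Hui3} and expand via the chain rule, exactly as you do. Your additional remarks on the derivative relations $\partial_2F\,\partial_2K=1$ and $\partial_1K=\partial_1F/\partial_2F$ and the resulting non-degeneracy of $\partial_2F$ are a nice bonus that the paper leaves implicit.
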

\begin{proof}

These two sets of equations are an easy consequence of the geometric identities
\eqref{eq:def - H and A2},  \eqref{eq:Hui2}--\eqref{eq:Hui3}, and the following calculations
\begin{align*}
	\mat \nu = &\ - \nb_{\Ga[X]} V 
	=  \nb_{\Ga[X]} \big( F(u,H) \big) \\
	= &\ \partial_1F(u,H) \nb_{\Ga[X]} u+\partial_2F(u,H) \nb_{\Ga[X]} H,  
\end{align*}
as well as
\begin{align*}
	\nbg H = &\ - \nb_{\Ga[X]} \big( K(u,V) \big) \\
	= &\ - \pa_2 K(u,V) \nb_{\Ga[X]} V - \pa_1 K(u,V) \nb_{\Ga[X]} u . 
\end{align*}\qed
\end{proof}


Employing the lemma above we see that \blueon a sufficiently smooth \blueoff solution of the original initial value problem \eqref{eq:MCFdiff} \blueon also satisfies  two  other \blueoff different problems involving parabolic PDE systems in which the  dependent variables are a  parametrised surface $\Ga[X]$, the velocity $v$ of $\Ga$, a surface  concentration field $u$, and either the variables $\nu$ and $V$ or $\nu$ and $H$.  In these problems  the variables $\nu,V$ or $\nu,H$ are considered to be \emph{independently evolving unknowns}, rather than being determined by the  associated geometric quantities of the surface $\Ga[X]$ (in contrast to the methods of Dziuk \cite{Dziuk90} or Barrett, Garcke, and N\"urnberg \cite{BGN2008}, etc.).
 \begin{problem}\label{P1}
Given $\{\Gamma^0,u^0,\nu^0,V^0\}$, find 
for $t\in (0,T]$  functions $
\{
X(\cdot, t)\colon\Gamma^0\rightarrow \mathbb R^3,\; v(\cdot,t)\colon\Ga[X(\cdot,t)]
\to \R^3,\; u\colon
\Ga[X(\cdot,t)]
\rightarrow \mathbb R,\; \nu(\cdot,t)\colon\Gamma[X(\cdot,t)]\rightarrow
\mathbb R^3$, $V(\cdot,t)\colon\Gamma[X(\cdot,t)]\rightarrow \mathbb R\}$ 
such that 
\begin{subequations}
\label{eq:coupled system - P1}
\begin{align}
	\label{eq:coupled system - P1 - ODE for positions} 
	\pa_t X = &\ v \circ X, \\
	\label{eq:coupled system - P1 - velocity law} 
	v = &\  V \nu, \\
	\label{eq:coupled system - P1 - normal}
	\pa_2 K(u,V) \, \mat \nu = &\  \laplace_{\Ga[X]} \nu + |A|^2 \nu + \partial_1K(u,V) \, \nb_{\Ga[X]} u , \\
	\label{eq:coupled system - P1 - V}
	\pa_2 K(u,V) \, \mat V = &\  \laplace_{\Ga[X]} V  +  |A|^2 V  -
\pa_1 K(u,V) \, \mat u,  \\
	\label{eq:coupled system - P1 - u}
	\mat u + u \, \nb_{\Ga[X]} \cdot v = &\ \nb_{\Ga[X]} \cdot \big( \blueon \D(u) \nb_{\Ga[X]} u \blueoff \big),  
	\end{align}
\end{subequations}
 with initial data
\begin{alignat*}{3}
	X(\cdot,0) =&\ \Id_{\Gamma^0}, & \qquad \nu(\cdot,0)= &\ \nu^0 ,\\ 
	V(\cdot,0) =&\ V^0 , & \qquad u(\cdot,0) = &\ u^0 ,
\end{alignat*}
where $\nu^0$ is the unit normal to $\Gamma^0$ and $V^0 = -F(u^0,H^0)$ with $H^0$ being the mean curvature of $\Gamma^0$.

\end{problem}

\begin{problem}\label{P2}
Given $\{\Gamma^0,u^0,\nu^0,H^0\}$, find  for $t\in (0,T]$ the functions 
$
\{X(\cdot,t)\colon\Gamma^0\rightarrow \mathbb R^3,u\colon
\Ga[X(\cdot,t)] 
\rightarrow \mathbb R, v(\cdot,t)\colon\Ga[X(\cdot,t)] \to \R^3$,  $\nu(\cdot,t)\colon\Gamma[X(\cdot,t)]\rightarrow \mathbb R^3$, \linebreak $V(\cdot,t)\colon\Gamma[X(\cdot,t)]\rightarrow \mathbb R\}$ 
such that 
\begin{subequations}
\label{eq:coupled system - P2}
\begin{align}
	\label{eq:coupled system - P2 - ODE for positions} 
	\pa_t X = &\ v \circ X ,\\
	\label{eq:coupled system - P2 - velocity law-H} 
	v = &\  -F(u,H) \nu, \\
	\label{eq:coupled system - P2 - normal-H}
	\frac{1}{\partial_2 F(u,H)} \, \mat \nu = &\  \laplace_{\Ga[X]} \nu +  |A|^2 \nu + \frac{\partial_1 F(u,H)}{ \partial_2 F(u,H)}  \nb_{\Ga[X]} u , \\
	\label{eq:coupled system - P2 - H}
	\mat H = &\  \laplace_{\Ga[X]} \big( F(u,H) \big) +  |A|^2
F(u,H),  \\
	\mat u + u \, \nb_{\Ga[X]} \cdot v = &\ \nb_{\Ga[X]} \cdot \big( \blueon \D(u) \nb_{\Ga[X]} u \blueoff  \big),  
	\end{align}
\end{subequations}
 with initial data
\begin{alignat*}{3}
	X(\cdot,0) =&\ \Id_{\Gamma^0}, & \qquad \nu(\cdot,0)= &\ \nu^0 ,\\ 
	H(\cdot,0) =&\ H^0 , & \qquad u(\cdot,0) = &\ u^0 ,
\end{alignat*}
where $\nu^0$ and $H^0$ are, respectively,   the unit  normal to  and mean curvature  of  $\Gamma^0$.

\end{problem}

The idea is to discretise these systems using the evolving surface finite element method, see, e.g., \cite{DziukElliott_ESFEM}, and also \cite{Demlow2009,highorderESFEM}. 
The same approach was successfully used previously for mean curvature flow \cite{MCF}, also with additive forcing \cite{MCF_soldriven}, and in arbitrary codimension \cite{MCF_codim}, for Willmore flow \cite{Willmore}, and for generalised mean curvature flows \cite{MCF_generalised}.

\subsection{\bf Main results}

In Theorem~\ref{theorem:semi-discrete error estimates}, we state and prove optimal-order time-uniform $H^1$ norm error estimates for the spatial semi-discre\-ti\-sation, with finite elements of degree at least $2$, in all variables of Problem~\ref{P1}, over time intervals on which the solution remain sufficiently regular. This excludes the formation of singularities, but not self-intersections. 
We expect that an analogous proof would suffice for the other system Problem~\ref{P2} but due to length this is not presented here.
The convergence proof separates the questions of consistency and stability. 
Stability is proved via \emph{energy estimates}, testing with the errors and also with their time derivatives. Similarly to previous works, the energy estimates are performed in the matrix--vector formulation, and they use technical lemmas comparing different quantities on different surfaces, cf.~\cite{KLLP2017,MCF}. Due to the non-linear structure of the evolution equations in the coupled system we will also need similar but new lemmas estimating differences of solution-dependent matrices, cf.~\cite{MCF_generalised}. A key issue in the stability proof is to establish a $W^{1,\infty}$ norm error bounds for all variables. These are obtained from the time-uniform $H^1$ norm error estimates via an inverse inequality. 

In \cite[Chapter~5]{diss_Buerger} B\"urger  proved qualitative properties for the continuous coupled flow \eqref{eq:MCF diff - gradient flow} with energy \eqref{eq:surface energy functional - intro}, for example the preservation of mean convexity,  the  possible loss of convexity, \blueon the existence  of \blueoff a  weak maximum principle for the diffusion equation, the decay of energy, and the existence of self-intersections. \blueon These properties are enjoyed by our evolving surface finite element method as   illustrated  in the  numerical  simulations  in Section~\ref{section:numerics}.\blueoff
\subsection{\bf Related numerical analysis}
 
Numerical methods for related problems have been proposed and studied in many papers. We first restrict our literature overview for numerical methods for at least two-dimensional \emph{surface} evolutions.

Algorithms for mean curvature flow were proposed, e.g., by Dziuk in \cite{Dziuk90}, in \cite{BGN2008}, and in \cite{ElliottFritz_DT} based on the DeTurck trick. The first provably convergent algorithm was proposed and analysed in \cite{MCF}, while \cite{MCF_soldriven} extends these convergence results to additively forced mean curvature flow coupled to a semi-linear diffusion equation on the surface. Recently, Li \cite{MCF_Dziuk_Li} proved convergence of Dziuk's algorithm, for two-dimensional surfaces requiring surface finite elements of degree $k \geq 6$. 

Evolving surface finite element based algorithms for diffusion equations on evolving surface were analysed, for example, in \cite{DziukElliott_ESFEM,DziukElliott_L2}, in particular non-linear equations were studied in \cite{KPower_quasilinear,surface_maxreg}.
On the numerical analysis of both problems we also refer to the comprehensive survey articles \cite{DeckelnickDE2005,DziukElliott_acta}, and \cite{BGN_survey}.
For curve shortening flow coupled to a diffusion on a closed \emph{curve} optimal-order finite element semi-discrete error estimates were shown in \cite{PozziStinner_curve}, while \cite{BDS} have proved convergence of the corresponding backward Euler full discretisation.  The case of open curves with a fix boundary was analysed in \cite{StylesVanYperen2020}.  
For forced-elastic flow of curves semi-discrete error estimates were proved in \cite{PozziStinner_elastic_curve}. 
For mean curvature flow coupled to a diffusion process on a \emph{graph} optimal-order fully discrete error bounds were recently shown in \cite{DeckelnickStyles2021}.

\subsection{\bf Outline}

The paper is organised as follows. 
Section~\ref{section:intro} introduces basic notation and geometric quantities, and it is mainly devoted to the derivation of the two coupled systems.
In Section~\ref{section:weak formulation and properties} we present the weak formulations of the coupled problems, and explore the properties of the coupled flow.
In Section~\ref{section:finite element discretisation} we briefly recap the evolving surface finite element method, define interpolation operators and Ritz maps.
Section~\ref{section:relating surfaces} presents important technical results relating different surfaces.
In Section~\ref{section:semi-discretisation} we present the semi-discrete systems, while Section~\ref{section:matrix-vector form} presents their matrix--vector formulations, and the error equations.
Section~\ref{section:stability analysis} contains the most important results of the paper: consistency and stability analysis, as well as our main result which proves optimal-order semi-discrete error estimates.
Section~\ref{section:proof - estimates} and \ref{section:proof - stability} are devoted to the proofs of the results presented in Section~\ref{section:stability analysis}.
Finally, in Section~\ref{section:numerics} we describe an efficient fully discrete scheme, based on linearly implicit backward differentiation formulae. Then we present numerical experiments which illustrate and complement our theoretical results. We present numerical experiments testing convergence, and others which preserve mean convexity, but lose convexity, report on weak maximum principles, energy decay, and on an experiment with self-intersection.

\section{Weak formulation, its properties, and examples}
\label{section:weak formulation and properties}

Throughout the paper we will assume the following properties of the nonlinear functions:
%
%
%
%
%
\blueon 
\begin{enumerate}
	\item \label{eq:assumptions on pa_1 F}
	$ \frac{\pa_1F}{\pa_2 F}$  is locally Lipschitz continuous, 
	
	\item \label{eq:assumptions on pa_2 F}
	$\frac{1}{\pa_2 F}$ is positive and locally Lipschitz continuous,
	
	\item \label{eq:assumptions on pa_1 K}
	$\pa_1 K$ and $\pa_2 K$ are locally Lipschitz continuous,
	
	\item \label{eq:assumptions on pa_2 K}
	$\pa_2 K(u,V)$ is positive,
	
	\item \label{eq:assumptions D}
	$\D$ satisfies	$0 < D_0 \leq \D(\cdot) \leq D_1$ and $ \D'$ is locally Lipschitz continuous.
\end{enumerate}
The domain of definitions of the above nonlinearities are depending on the particular problem at hand. 
These properties hold on a compact neighbourhood of the exact smooth solution, on which $\pa_2 K(u,V)$ and $1/\pa_2F(u,H)$ are bounded from above and below by positive constants, and all functions are Lipschitz continuous.
\blueoff

\subsection{\bf Weak formulations}

\subsubsection*{Weak formulation of Problem~\ref{P1}}
The weak formulation of Problem~\ref{P1} reads: Find $X\colon \Ga^0 \to \R^3$ defining the (sufficiently smooth) surface $\Ga[X]$ with velocity $v$, and $\nu \in L^2_{H^1(\Ga[X])^3}$ with $\mat \nu \in L^2_{L^2(\Ga[X])^3}$, $V \in L^2_{H^1(\Ga[X])}$ with $\mat V \in L^2_{L^2(\Ga[X])}$, and $u \in L^2_{H^1(\Ga[X])}$ with $\mat u \in L^2_{L^2(\Ga[X])}$ such that, denoting $A = \nb_{\Ga[X]} \nu$ and $|\cdot|$ the Frobenius norm, 
\begin{subequations}
\label{eq:weak formulation - P1}
	\begin{align}
		&\ \quad v = V \nu , \\[1mm]
		&\ \int_{\Ga[X]} \!\!\!\! \pa_2 K(u,V) \, \mat \nu \cdot \vphi^\nu + \int_{\Ga[X]} \!\!\!\! \nb_{\Ga[X]} \nu \cdot \nb_{\Ga[X]} \vphi^\nu 
		\nonumber  \\ &\ \qquad\qquad\qquad 
		= \int_{\Ga[X]} \!\!\!\! |A|^2 \nu \cdot \vphi^\nu + \int_{\Ga[X]} \!\!\!\! \pa_1 K(u,V) \nb_{\Ga[X]} u \cdot \vphi^\nu  , \\ 
		&\ \int_{\Ga[X]} \!\!\!\! \pa_2 K(u,V) \, \mat V \, \vphi^V + \int_{\Ga[X]} \!\!\!\! \nb_{\Ga[X]} V \cdot \nb_{\Ga[X]} \vphi^V 
		\nonumber  \\ &\ \qquad\qquad\qquad 
		= \int_{\Ga[X]} \!\!\!\! |A|^2 V \vphi^V - \int_{\Ga[X]} \!\!\!\! \pa_1 K(u,V) \, \mat u \, \vphi^V , \\
		\label{eq:weak formulation - P1 - diffusion eq}
		&\ \diff \bigg( \int_{\Ga[X]}  \!\!\! u \, \vphi^u \bigg) + \int_{\Ga[X]} \!\!\!\! \D(u) \, \nb_{\Ga[X]} u \cdot \nb_{\Ga[X]} \vphi^u = \int_{\Ga[X]} \!\!\!\! u \, \mat \vphi^u ,
	\end{align}
\end{subequations}
holds for all test functions $\vphi^\nu \in L^2_{H^1(\Ga[X])^3}$, $\vphi^V \in L^2_{H^1(\Ga[X])}$, and $\vphi^u \in L^2_{H^1(\Ga[X])}$ with $\mat \vphi^u  \in L^2_{L^2(\Ga[X])}$, together with the ODE for the positions \eqref{eq:velocity ODE}. The coupled weak system is endowed with initial data $\Ga^0$, $\nu^0$, $V^0$, and $u^0$. 
For the definition of the Bochner-type spaces $L^2_{L^2(\Ga[X])}$ and $L^2_{H^1(\Ga[X])}$, which consist of time-dependent functions spatially defined on an evolving hypersurface, we refer to \cite{AlphonseElliottStinner}.

\subsubsection*{Weak formulation of Problem~\ref{P2}}
The weak formulation of Problem~\ref{P2} reads:
Find $X\colon \Ga^0 \to \R^3$ defining the (sufficiently smooth) surface $\Ga[X]$ with velocity $v$, and $\nu \in L^2_{H^1(\Ga[X])^3}$ with $\mat \nu \in L^2_{L^2(\Ga[X])^3}$, $H\in L^2_{L^2(\Ga[X])}$ with $\mat H \in L^2_{L^2(\Ga[X])}$, $V \in L^2_{H^1(\Ga[X])}$, and $u \in L^2_{H^1(\Ga[X])}$ with $\mat u \in L^2_{L^2(\Ga[X])}$ such that, denoting $A = \nb_{\Ga[X]} \nu$ and $|\cdot|$ the Frobenius norm, 
\begin{subequations}
	\label{eq:weak formulation - P2}
	\begin{align}
	&\ \quad v = V \nu , \\[1mm]
	&\ \int_{\Ga[X]} \! \frac{1}{ \partial_2 F(u,H)} \, \mat \nu \cdot \vphi^\nu + \int_{\Ga[X]} \!\!\!\! \nb_{\Ga[X]} \nu \cdot \nb_{\Ga[X]} \vphi^\nu 
	\nonumber  \\ &\ \qquad\qquad\qquad 
	= \int_{\Ga[X]} \!\!\!\! |A|^2 \nu \cdot \vphi^\nu + \int_{\Ga[X]} \! \frac{\partial_1 F(u,H)}{ \partial_2 F(u,H)} \nb_{\Ga[X]} u \cdot \vphi^\nu  , \\ 
	&\ \int_{\Ga[X]} \!\!\!\! \mat H \, \vphi^H \blueon -
	\blueoff  \int_{\Ga[X]} \!\!\!\! \nb_{\Ga[X]} V \cdot \nb_{\Ga[X]} \vphi^H = - \int_{\Ga[X]} \!\!\!\! |A|^2 V \vphi^H , \\
	&\ \int_{\Ga[X]} \!\!\!\! V \, \vphi^V + \int_{\Ga[X]} \!\!\!\! F(u,H) \, \vphi^V = 0 , \\
	&\ \diff \bigg( \int_{\Ga[X]} u \, \vphi^u \bigg) + \int_{\Ga[X]} \D(u) \, \nb_{\Ga[X]} u \cdot \nb_{\Ga[X]} \vphi^u = \int_{\Ga[X]} u \, \mat \vphi^u ,
	\end{align}
\end{subequations}
holds for all test functions $\vphi^\nu \in L^2_{H^1(\Ga[X])^3}$, $\vphi^H \in L^2_{H^1(\Ga[X])}$, $\vphi^V \in L^2_{L^2(\Ga[X])}$, and $\vphi^u \in L^2_{H^1(\Ga[X])}$ with $\mat \vphi^u  \in L^2_{L^2(\Ga[X])}$, together with the ODE for the positions \eqref{eq:velocity ODE}. The coupled weak system is endowed with initial data $\Ga^0$, $\nu^0$, $H^0$, and $u^0$.

\subsection{\bf Properties of the weak solution}
\label{section:examples and properties}

\begin{enumerate}
	\item \emph{Conservation of mass:} This is easily seen by testing the weak formulation \eqref{eq:weak formulation - P1 - diffusion eq} with $\vphi^u \equiv 1$.
	\item
	\emph{Weak maximum principle:} By testing the diffusion equation with $\min{(u,0)}$  and assuming that
	$$0\le u^0\le M^0,~~\mbox{a.e.~on}~~\Gamma^0 ,$$
	we find, cf.~\cite[Section~5.4]{diss_Buerger},
	\begin{equation}
	0\le u(\cdot,t),~~\mbox{a.e.~on}~~\Gamma[X].
	\end{equation}
	\item \emph{Energy bounds:}
	\blueon Let $G$ be any convex function, for which $g(u) = G(u) - G'(u)u \geq 0$. 
	Taking the time derivative of the energy $\int_{\Ga[X]} G(u)$, and using the diffusion equation \eqref{diffeqn} and \eqref{eq:divergence of velocity}, we obtain,
	\begin{align*}
	&\ \diff \bigg( \int_{\Ga[X]} G(u) \bigg) \\
	= &\  \int_{\Ga[X]} G'(u) \mat u + \int_{\Ga[X]}  (\nb_{\Ga[X]} \cdot v) G(u) \\
	= &\ \int_{\Ga[X]} G'(u) \Big( \nb_{\Ga[X]} \cdot \big( \D(u) \nb_{\Ga[X]} u \big) - u (\nb_{\Ga[X]} \cdot v) \Big) + \int_{\Ga[X]} (\nb_{\Ga[X]} \cdot v) G(u) \\
	= &\ \int_{\Ga[X]} G'(u) \Big( \nb_{\Ga[X]} \cdot \big( \D(u) \nb_{\Ga[X]} u \big) - u V H \Big) + \int_{\Ga[X]} V H G(u) \\
	= &\ \int_{\Ga[X]} G'(u) \nb_{\Ga[X]} \cdot \big( \D(u) \nb_{\Ga[X]} u \big)
	+ \int_{\Ga[X]} \big(G(u)- G'(u) u ) \big) V H \\
	= &\ - \int_{\Ga[X]} \D(u) G''(u)
	|\nb_{\Ga[X]} u|^2
	+ \int_{\Ga[X]} g(u) V H 
	\end{align*}
	yielding
	\begin{equation}
	\label{eq:energy identity}
		\diff \bigg( \int_{\Ga[X]} G(u) \bigg) +  \int_{\Ga[X]} \D(u) G''(u) |\nb_{\Ga[X]} u|^2 = \int_{\Ga[X]} g(u) V H.
	\end{equation}

Energy decrease and a priori estimates follow provided that  $VH \leq 0$, (note that $\D(u) G''(u) \geq 0$, $g(u) = G(u) - G'(u)u \geq 0$ are already assumed). This inequality holds assuming $K(u,V)V\geq 0$ and $F(u,H)H\geq 0$, respectively, for Problem~\ref{P1} and Problem~\ref{P2}.
For system \eqref{eq:MCF diff - gradient flow} the energy identity \eqref{eq:energy identity} leads to the natural energy decrease for the gradient flow \cite[Section~3.3--3.4]{diss_Buerger}, \cite{ABG_MCFdiff}.
\blueoff 
\end{enumerate}


\section{Finite element discretisation}
\label{section:finite element discretisation}
\subsection{\bf Evolving surface finite elements}\label{section:ESFEM}

For the spatial semi-discretisation of the weak coupled systems \eqref{eq:weak formulation - P1} and \eqref{eq:weak formulation - P2} we will use the evolving surface finite element method (ESFEM) \cite{Dziuk88,DziukElliott_ESFEM}.  We use curved simplicial finite elements and basis functions defined by continuous piecewise polynomial basis functions of degree~$k$ on triangulations, as defined in \cite[Section~2]{Demlow2009}, \cite{highorderESFEM} and \cite{EllRan21}.

\subsubsection{Surface finite elements}

The given smooth initial surface $\Ga^0$ is triangulated by an admissible family of triangulations $\mathcal{T}_h$ of degree~$k$ \cite[Section~2]{Demlow2009}, consisting of curved simplices  of maximal element diameter $h$; see \cite{DziukElliott_ESFEM} and \cite{EllRan21} for the notion of an admissible triangulation, which includes quasi-uniformity and shape regularity. Associated with the triangulation is a collection of unisolvent nodes   $p_j$ $(j=1,\dots,\dof)$ for which nodal variables define the   piecewise polynomial basis functions $\{\phi_j\}_{j=1}^N$.

Throughout we consider triangulations $\Gamma_h[\bfy]$ isomorphic  to $\Gamma_h^0$ with respect to the labelling of the vertices, faces, edges and nodes. We use the notation $\bfy\in \mathbb R^{3N}$  to denote the positions $y_j = \bfy|_j$, of nodes mapped to $p_j$ so that

$$\Gamma_h[\bfy]:= \bigg\{q=\sum_{j=1}^N y_j\phi_j(p) ~\Big|~p\in \Gamma_h^0 \bigg\}.$$
That is we assume there is a unique pullback $\tilde p \in \Ga_h^0$ such that for each $q\in \Ga_h[\bfy]$ it holds $q=\sum_{j=1}^\dof y_j\phi_j(\tilde p)$. 

We define globally continuous finite element \emph{basis functions} using the pushforward
$$
	\phi_i[{\bfy}]\colon \Ga_h[\bfy]\to\R, \qquad i=1,\dotsc,\dof
$$
such that 
$$
\phi_i[{\bfy}](q)=\phi_i(\tilde p), \quad q\in \Ga_h[\bfy].
$$
Thus they have the property that on every curved  triangle their pullback to the reference triangle is polynomial of degree $k$, which satisfy at the nodes $\phi_i[\bfy](y_j) = \delta_{ij}$ for all $i,j = 1,  \dotsc, \dof$.
These  basis functions define  a  finite element space on $\Ga_h[\bfy]$
\begin{equation*}
	S_h[\bfy] = S_h(\Ga_h[\bfy]) = \spn\big\{ \phi_1[\bfy], \phi_2[\bfy], \dotsc, \phi_\dof[\bfy] \big\} .
\end{equation*}

We associate with a vector $\bfz =\{z_j\}_{j=1}^\dof \in \mathbb R^\dof$ a finite element function $z_h\in S_h[\bfy]$ by
$$z_h(q)=\sum_{j=1}^\dof z_j\phi_j[\bfy](q), \qquad q\in \Ga_h[\bfy].$$
For a finite element function $z_h\in S_h[\bfy]$, the tangential gradient $\nabla_{\Ga_h[\bfy]}z_h$ is defined piecewise on each curved  element.

\subsubsection{Evolving surface finite elements}
We set $\Ga_h^0$ to be an admissible  initial  triangulation that interpolates $\Ga^0$ at  the nodes $p_j$ and we denote by $\bfx^0$ the vector in $\R^{3\dof}$ that collects all nodes so $x_j^0=p_j$. Evolving  the $j$th node $p_j$  in time by a  velocity $v_j(t)\in C([0,T])$,  yields a  collection  of surface  nodes denoted by ${\bfx}(t) \in \R^{3\dof}$, with $ x_j(t)=\bfx (t)|_j$ at time $t$ and $\bfx(0)=\bfx^0$.
Given such a collection of surface nodes we may define an evolving  discrete surface by
$$\Ga_h[{\bfx}(t)]:= \bigg\{{X}_h(p,t):=\sum_{j=1}^Nx_j(t)\phi_j(p)~\Big|~p\in \Gamma_h^0 \bigg\}.$$
That is,   the discrete surface at time $t$ is parametrized by the initial discrete surface via the map $X_h(\cdot,t)\colon\Ga_h^0\to\Ga_h[\bfx(t)]$ 
which has the properties that $X_h(p_j,t)=x_j(t)$ for $j=1,\dots,\dof$,  $X_h(p_h,0) = p_h$ for all $p_h\in\Ga_h^0$
and for each $q\in \Ga_h[\bfx(t)]$ there exists a unique pullback $p(q,t)\in \Ga_h^0$ such that $q=\sum_{j=1}^Nx_j(t)\phi_j(p(q,t)).$
We assume that the discrete surface remains admissible, which -- in view of the $H^1$ norm error bounds of our main theorem -- will hold provided the flow map $X$ is sufficiently regular, see~Remark~\ref{remark:convergence result}.

We define globally continuous finite element \emph{basis functions} using the pushforward
$$
	\phi_i[{\bfx(t)}]\colon \Ga_h[\bfx(t)]\to\R, \qquad i=1,\dotsc,\dof
$$
such that 
$$
\phi_i[{\bfx}(t)](q)=\phi_i(p(q,t)), \quad q\in \Ga_h[{\bfx}(t)].
$$
Thus they have the property that on every curved evolving triangle their pullback to the reference triangle is polynomial of degree $k$, and which satisfy at the nodes $\phi_i[\bfx(t)](x_j) = \delta_{ij}$ for all $i,j = 1,  \dotsc, \dof$.
These  basis functions define  an evolving  finite element space on $\Ga_h[\bfx(t)]$
\begin{equation*}
	S_h[\bfx(t)] = S_h(\Ga_h[\bfx(t)]) = \spn\big\{ \phi_1[\bfx(t)], \phi_2[\bfx(t)], \dotsc, \phi_\dof[\bfx(t)] \big\} .
\end{equation*}

We define a {\it material derivative},  $\mat_h\cdot$, on the time dependent finite element space as the push forward of the time derivative of the pullback function. Thus the basis functions satisfy the \emph{transport property} \cite{DziukElliott_ESFEM}:
\begin{equation}
\label{eq:transport property of basis functions}
	\mat_h \phi_j[\bfx(t)] = 0 .
\end{equation}
It follows that for $\eta_h(\cdot,t) \in S_h[\bfx(t)]$, (with nodal values $(\eta_j\t)_{j=1}^N$), we have
$$\mat_h\eta_h=\sum_{j=1}^N\dot \eta_j(t) \phi_j[\bfx(t)] ,$$
where the dot denotes the time derivative $\d/\d t$.
The \emph{discrete velocity} $v_h(q,t) \in \R^3$ at a point $q=X_h(p,t) \in \Ga[X_h(\cdot,t)]$ is given by
$$
	\partial_t X_h(p,t) = v_h(X_h(p,t),t)=\sum_{j=1}^N{\dot x}_j(t)\phi_j(p), \qquad p\in \Gamma_h^0.
$$
%

\begin{definition}[Interpolated-surface]
\label{def:interpolated surface}
Let  $\bfx^*(t)\in \mathbb R^{3\dof}$ and  $\bfv^*(t)\in \mathbb R^{3\dof}$  be the vectors with components $x_j^*(t)=X(p_j,t)$, $v^*_j(t):=\dot X(p_j,t)$ where $X(\cdot,t)$ solves Problem~\ref{P1} and \ref{P2}. The evolving triangulated surface $\Gamma_h[\bfx^*(t)]$ associated with $X_h^*(\cdot,t)$ is called the {\it interpolating surface}, with \emph{interpolating velocity} $v_h^*\t$.
\end{definition}
The interpolating surface $\Gamma_h[\bfx^*(t)]$ associated with $X_h^*(\cdot,t)$  is assumed to be admissible for all $t\in [0,T]$, which indeed holds provided the flow map $X$ is sufficiently regular, see~Remark~\ref{remark:convergence result}.

\subsection{\bf Lifts}
\label{section:lifts}


Any finite element function $\eta_h$ on the discrete surface $\Ga_h[\bfx\t]$, with nodal values $(\eta_j)_{j=1}^N$, is associated with a finite element function $\widehat \eta_h$ on the interpolated surface $\Ga_h[\xs\t]$ with the exact same nodal values. 
This can be further lifted to a function on the exact surface by using the \emph{lift operator} $\,^\ell$, mapping a function on the interpolated surface $\Ga_h[\xs\t]$ to a function on the exact surface $\Ga[X(\cdot,t)]$, via the identity, for $x \in \Ga_h[\xs\t]$,
\begin{equation*}
	x^\ell = x - d(x,t) \nu_{\Ga[X]}(x^\ell,t) , \qquad \text{and setting} \qquad \widehat \eta_h^\ell(x^\ell) = \widehat \eta_h(x) ,
\end{equation*}
using a signed distance function $d$, provided that the two surfaces are sufficiently close. For more details on the lift $\,^\ell$, see \cite{Dziuk88,DziukElliott_acta,Demlow2009}. The inverse lift is denoted by $\eta^{-\ell}\colon \Ga_h[\xs\t] \to \R$ such that $(\eta^{-\ell})^\ell = \eta$.

Then the \emph{composed lift operator} $\,^L$ maps finite element functions on the discrete surface $\Gamma_h[\bfx(t)]$ to functions on the exact surface $\Gamma[X(\cdot,t)]$, see \cite{MCF}, via the interpolated surface $\Gamma_h[\xs(t)]$, by 
$$
\eta_h^L = (\widehat \eta_h)^\ell.
$$

We introduce the notation
$$
	x_h^L(x,t) =  X_h^L(p,t) \in \Ga_h[\bfx(t)] \qquad \hbox{for} \quad x=X(p,t)\in\Ga[X(\cdot,t)],
$$
where, for $p_h \in \Ga_h^0$, from the nodal vector $\bfx\t$ we obtain the function $X_h(p_h,t) = \sum_{j=1}^N x_j\t \phi_j[\bfx(0)](p_h)$, while from $\bfu\t$, with $x_h \in \Ga_h[\bfx\t]$, we obtain $u_h(x_h,t) = \sum_{j=1}^N u_j\t \phi_j[\bfx(t)](x_h)$, and similarly for any other nodal vectors.

\subsection{\bf Surface mass and stiffness matrices and discrete norms}

For a triangulation $\Ga_h[\bfy]$ associated with the nodal vector $\bfy\in \mathbb R^{3N}$, we define the surface-dependent positive definite mass matrix $\bfM(\bfy) \in \R^{N \times N}$ and surface-dependent positive semi-definite stiffness matrix $\bfA(\bfy) \in \R^{N \times N}$:
\begin{align*}
	\bfM(\bfy)|_{ij} = \int_{\Ga_h[\bfy]} \!\!\!\! \phi_i[\bfy] \, \phi_j[\bfy] , \andquad \bfA(\bfy)|_{ij} = \int_{\Ga_h[\bfy]} \!\!\!\! \nb_{\Ga_h[\bfy]} \phi_i[\bfy] \cdot \nb_{\Ga_h[\bfy]} \phi_j[\bfy],
\end{align*}
and then set 
$$\bfK(\bfy) = \bfM(\bfy) + \bfA(\bfy).$$ 
For a pair of finite element functions $z_h,w_h\in S_h[\bfy]$  with nodal vectors $\bfz,\bfw$ we have
$$(z_h,w_h)_{L^2(\Ga_h[\bfy])}=\bfz^T\bfM(\bfy)\bfw ~~\mbox{and}~~~(\nabla_{\Ga_h[\bfy]}z_h,\nabla_{\Ga_h[\bfy]}w_h)_{L^2(\Ga_h[\bfy])}=\bfz^T\bfA(\bfy)\bfw . $$

These finite element matrices  induce discrete versions of Sobolev norms on the discrete surface $\Ga_h[\bfy]$. For any nodal vector $\bfz \in \R^{N}$, with the corresponding finite element function $z_h \in S_h[\bfy]$, we define the following (semi-)norms:
\begin{equation}
\label{eq:norms}
	\begin{aligned}
		\|\bfz\|_{\bfM(\bfy)}^{2} = &\ \bfz^T \bfM(\bfy) \bfz = \|z_h\|_{L^2(\Ga_h[\bfy])}^2 , \\
		\|\bfz\|_{\bfA(\bfy)}^{2} = &\ \bfz^T \bfA(\bfy) \bfz = \|\nb_{\Ga_h[\bfy]} z_h\|_{L^2(\Ga_h[\bfy])}^2 , \\
		\|\bfz\|_{\bfK(\bfy)}^{2} = &\ \bfz^T \bfK(\bfy) \bfz = \|z_h\|_{H^1(\Ga_h[\bfy])}^2 .
	\end{aligned}
\end{equation}

\subsection{\bf Ritz maps}
\label{section:defects}

Let the nodal vectors $\xs\t \in \R^{3N}$ and $\vs\t \in \R^{3N}$, collect the nodal values of the exact solution $X(\cdot,t)$ and $v(\cdot,t)$. Recalling Definition~\ref{def:interpolated surface}, the corresponding finite element functions $X_h^*(\cdot,t)$ and $v_h^*(\cdot,t)$ in $S_h[\xs\t]^3$ are the finite element interpolations of the exact solutions. 

The two Ritz maps below are defined following \cite[Definition~6.1]{highorderESFEM} (which is slightly different from \cite[Definition~6.1]{DziukElliott_L2} or \cite[Definition~3.6]{EllRan21}) and -- for the quasi-linear Ritz map -- following \cite[Definition~3.1]{KPower_quasilinear}.  
\begin{definition}
For any $w \in H^1(\Ga[X])$ the generalised Ritz map $\Rh w \in S_h[\xs]$ uniquely solves, for all $\vphi_h \in S_h[\xs]$,
\begin{equation}
\label{eq:definition Ritz map}
	\begin{aligned}
		&\ \int_{\Ga_h[\xs]} \!\!\!\! \Rh w \, \vphi_h + \int_{\Ga_h[\xs]} \!\!\!\! \nb_{\Ga_h[\xs]} \Rh w \cdot \nb_{\Ga_h[\xs]} \vphi_h \\
		= &\ \int_{\Ga[X]} \!\!\!\! w \, \vphi_h^\ell + \int_{\Ga[X]} \!\!\!\! \nb_{\Ga[X]} w \cdot \nb_{\Ga[X]} \vphi_h^\ell .
	\end{aligned}
\end{equation}
\end{definition}

\begin{definition}For any $u \in H^1(\Ga[X])$ and an arbitrary (sufficiently smooth) $\xi \colon \Ga[X] \to \R$ the  $\xi$-dependent Ritz map $\Rh^\xi u \in S_h[\xs]$  uniquely solves, for all $\vphi_h \in S_h[\xs]$,
\begin{equation}
\label{eq:definition quasi-linear Ritz map}
	\begin{aligned}
		&\ \int_{\Ga_h[\xs]} \!\!\! \Rh^\xi u \, \vphi_h + \int_{\Ga_h[\xs]} \!\!\!\! \D(\xi^{-\ell}) \, \nb_{\Ga_h[\xs]} \Rh^\xi u \cdot \nb_{\Ga_h[\xs]} \vphi_h \\
		= &\ \int_{\Ga[X]} \!\!\!\! u \vphi_h^\ell + \int_{\Ga[X]} \!\!\!\! \D(\xi) \, \nb_{\Ga[X]} u \cdot \nb_{\Ga[X]} \vphi_h^\ell ,
	\end{aligned}
\end{equation}
where $^{-\ell}$ denotes the inverse lift operator, cf.~Section~\ref{section:lifts}.
\end{definition}
We will also refer to $\Rh^\xi$ as quasi-linear Ritz map, since it is associated to a quasi-linear elliptic operator.

\begin{definition}
The Ritz maps $R_h$ and $R_h^\xi$ are then defined as the lifts of $\Rh$ and $\Rh^\xi$, i.e.~$R_h u  = (\Rh u)^\ell \in S_h[\xs]^\ell$ and $R_h^\xi u  = (\Rh^\xi u)^\ell \in S_h[\xs]^\ell$.
\end{definition}

\section{\bf{Relating different surfaces}}
\label{section:relating surfaces}
In this section from \cite{KLLP2017,MCF} we recall useful inequalities relating norms and semi-norms on differing surfaces. First recalling results in a general evolving surface setting, and then proving new results for the present problem.

Given a pair of triangulated surfaces $\Ga_h[\bfx]$ and $\Ga_h[\bfy]$ with nodal vectors $\bfx, \bfy \in \R^{3N}$, we may view  $\Ga_h[\bfx]$  as an evolution of $\Ga_h[\bfy]$ with a constant velocity $\bfe = (e_j)_{j=1}^N = \bfx - \bfy \in \R^{3N}$ yielding a family of intermediate surfaces.
\begin{definition}For $\theta \in [0,1]$  the {\it intermediate surface} $\Ga_h^\theta$ is defined by
	$$ \Ga_h^\theta = \Ga_h[\bfy+\theta\bfe].$$
For the  vectors $\bfx= \bfe + \bfy, \bfw,\bfz \in \R^N$, we define the corresponding finite element functions on $\Ga_h^\theta$:
\begin{align*}
	e_h^\theta = \sum_{j = 1}^N e_j \phi_j[\bfy+\theta\bfe] , \quad 
	w_h^\theta = \sum_{j = 1}^N w_j \phi_j[\bfy+\theta\bfe] , \quad \text{and} \quad
	z_h^\theta = \sum_{j = 1}^N z_j \phi_j[\bfy+\theta\bfe] .
\end{align*}
\end{definition}
Figure~\ref{figure:relating different surfaces} illustrates the described construction. 

\begin{figure}[htbp]
	\begin{center}
		\includegraphics[scale=1]{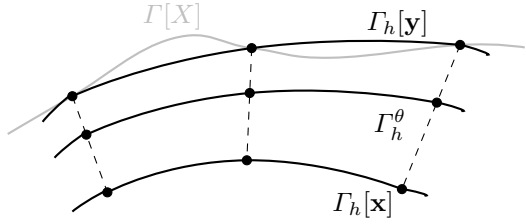}
		\caption{The construction of the intermediate surfaces $\Ga_h^\theta$ for quadratic elements.}
		\label{figure:relating different surfaces}
	\end{center}
\end{figure}

It follows from the evolving surface transport theorems for the $L^2$ and Dirichlet inner products, \cite{DziukElliott_acta}, that for arbitrary vectors $\bfw, \bfz \in \R^N$:
	\begin{align}
	\label{matrix difference M}
		\bfw^T (\bfM(\bfx)-\bfM(\bfy)) \bfz =&\ \int_0^1 \int_{\Ga_h^\theta} w_h^\theta (\nabla_{\Ga_h^\theta} \cdot e_h^\theta) z_h^\theta \; \d\theta, \\
	\label{matrix difference A}
		\bfw^T (\bfA(\bfx)-\bfA(\bfy)) \bfz =&\ \int_0^1 \int_{\Ga_h^\theta} \nb_{\Ga_h^\theta} w_h^\theta \cdot (D_{\Ga_h^\theta} e_h^\theta)\nb_{\Ga_h^\theta}  z_h^\theta \; \d\theta ,
	\end{align}
	where
	$D_{\Ga_h^\theta} e_h^\theta =  \textnormal{tr}(E^\theta) I_3 - (E^\theta+(E^\theta)^T)$ with $E^\theta=\nabla_{\Ga_h^\theta} e_h^\theta \in \R^{3\times 3}$.


The following results relate the mass and stiffness matrices for the discrete surfaces $\Ga_h[\bfx]$ and $\Ga_h[\bfy]$, they follow by the Leibniz rule, and are given in \cite[Lemma~4.1]{KLLP2017}, \cite[Lemma~7.2]{MCF}.
\begin{result}
\label{results collection}
In the above setting, if  
\begin{equation}
\label{Epsass}
	\eps := \| \nabla_{\Ga_h[\bfy]} e_h^0 \|_{L^\infty(\Ga_h[\bfy])} \leq \tfrac14 ,
\end{equation}
then the following hold:
\begin{enumerate}

%

\item
 For $0\le \theta\le 1$ and  $1 \le p \le \infty$  with a constant $c_p > 0$ independent of $h$ and $\theta$:

\begin{equation}
\label{eq:lemma - theta-independence}
	\begin{aligned}
		&\| w_h^\theta \|_{L^p(\Ga_h^\theta)} \leq c_p \, \|w_h^0 \|_{L^p(\Ga_h^0)} , 
		\qquad \| \nabla_{\Ga_h^\theta} w_h^\theta \|_{L^p(\Ga_h^\theta)} \le c_p \, \| \nabla_{\Ga_h^0} w_h^0 \|_{L^p(\Ga_h^0)}.
	\end{aligned}
\end{equation}

\item 
\begin{equation}
\label{norm equivalence}
	\begin{aligned}
		&\ \text{The norms $\|\cdot\|_{\bfM(\bfy+\theta\bfe)}$ and the norms $\|\cdot\|_{\bfA(\bfy+\theta\bfe)}$} 
		\\ &\
		\text{are $h$-uniformly equivalent for $0\le\theta\le 1$.}
	\end{aligned}
\end{equation}
\item For any $\bfw, \bfz \in \R^N$, with an $h$-independent constant $c>0$, we have the estimates
\begin{equation}
\label{matrix difference bounds}
	\begin{aligned}
		\bfw^T (\bfM(\bfx)-\bfM(\bfy)) \bfz \leq &\ c \, \eps \, \|\bfw\|_{\bfM(\bfy)} \|\bfz\|_{\bfM(\bfy)} , \\[1mm]
		\bfw^T (\bfA(\bfx)-\bfA(\bfy)) \bfz \leq &\ c \, \eps \, \|\bfw\|_{\bfA(\bfy)} \|\bfz\|_{\bfA(\bfy)} .
	\end{aligned}
\end{equation}
\item If $z_h \in W^{1,\infty}(\Ga_h[\bfy])$ then, for any $\bfw, \bfz \in \R^N$, with an $h$-independent constant $c>0$, we have
\begin{equation}
\label{matrix difference bounds e_x}
	\begin{aligned}
		\bfw^T (\bfM(\bfx)-\bfM(\bfy)) \bfz \leq &\ c \, \|\bfw\|_{\bfM(\bfy)} \|\bfe\|_{\bfA(\bfy)} \|z_h\|_{L^{\infty}(\Ga_h[\bfy])} , \\[1mm]
		\bfw^T (\bfA(\bfx)-\bfA(\bfy)) \bfz \leq &\ c \, \|\bfw\|_{\bfA(\bfy)} \|\bfe\|_{\bfA(\bfy)}  \|z_h\|_{W^{1,\infty}(\Ga_h[\bfy])}.
	\end{aligned}
\end{equation}

\end{enumerate}
\end{result}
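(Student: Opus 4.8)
The plan is to view $\Ga_h[\bfx]$ as the time-$1$ position of $\Ga_h[\bfy]$ under the artificial flow $\theta \mapsto \Ga_h^\theta = \Ga_h[\bfy+\theta\bfe]$, $\bfe = \bfx-\bfy$, which moves with the \emph{constant} nodal velocity $\bfe$. Along this flow the finite element functions $e_h^\theta$, $w_h^\theta$, $z_h^\theta$ have vanishing discrete material derivative, $\partial^\bullet_\theta w_h^\theta = 0$ etc., because their nodal values are $\theta$-independent; this is exactly the transport property \eqref{eq:transport property of basis functions}. All four assertions then follow from the transport theorems for the $L^2$- and Dirichlet inner products on an evolving surface, together with the smallness hypothesis \eqref{Epsass}, by Hölder's inequality and a Gronwall argument — as in \cite[Lemma~4.1]{KLLP2017} and \cite[Lemma~7.2]{MCF}.

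\emph{The $\theta$-independence estimate \eqref{eq:lemma - theta-independence}.} For $1\le p<\infty$, differentiating $\int_{\Ga_h^\theta}|w_h^\theta|^p$ in $\theta$ and using $\partial^\bullet_\theta w_h^\theta = 0$ gives
\begin{equation*}
	\frac{\d}{\d\theta}\int_{\Ga_h^\theta}|w_h^\theta|^p = \int_{\Ga_h^\theta}(\nabla_{\Ga_h^\theta}\cdot e_h^\theta)\,|w_h^\theta|^p .
\end{equation*}
The quantitative input is that, since $\eps\le\tfrac14$, the map $\Ga_h[\bfy]\to\Ga_h^\theta$ is bi-Lipschitz with constants bounded uniformly in $\theta$, so in particular $\|\nabla_{\Ga_h^\theta}\cdot e_h^\theta\|_{L^\infty(\Ga_h^\theta)}\le c\,\eps$ for all $\theta\in[0,1]$. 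A Gronwall argument in $\theta$ then yields $\int_{\Ga_h^\theta}|w_h^\theta|^p\le e^{c\eps}\int_{\Ga_h^0}|w_h^0|^p$, hence the first inequality of \eqref{eq:lemma - theta-independence}; the case $p=\infty$ follows by letting $p\to\infty$, or directly from the bi-Lipschitz bound. The gradient inequality is obtained in the same way from the transport theorem for the Dirichlet inner product, the extra term now involving $D_{\Ga_h^\theta}e_h^\theta$, which is again $O(\eps)$ in $L^\infty$ by the same reasoning. (For $p<2$ the non-smoothness of $s\mapsto|s|^p$ at $0$ is harmless: use the Lipschitz chain rule, or regularise and pass to the limit.) Assertion \eqref{norm equivalence} — that within each of the two families the norms are mutually equivalent with $h$- and $\theta$-independent constants — is precisely the case $p=2$, rewritten via $\|\bfz\|_{\bfM(\bfy+\theta\bfe)}=\|z_h^\theta\|_{L^2(\Ga_h^\theta)}$ and $\|\bfz\|_{\bfA(\bfy+\theta\bfe)}=\|\nabla_{\Ga_h^\theta}z_h^\theta\|_{L^2(\Ga_h^\theta)}$.

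\emph{The difference bounds \eqref{matrix difference bounds} and \eqref{matrix difference bounds e_x}.} Insert the representations \eqref{matrix difference M} and \eqref{matrix difference A} and estimate the integrands by Hölder's inequality on $\Ga_h^\theta$, then integrate in $\theta$. For \eqref{matrix difference bounds} put $w_h^\theta,z_h^\theta$ in $L^2$ and the factor $\nabla_{\Ga_h^\theta}\cdot e_h^\theta$, respectively $D_{\Ga_h^\theta}e_h^\theta$, in $L^\infty$, bound the latter by $c\,\eps$ as above, and transfer the $L^2$ norms of $w_h^\theta,z_h^\theta$ back to $\theta=0$ by \eqref{eq:lemma - theta-independence}, obtaining $\|\bfw\|_{\bfM(\bfy)}\|\bfz\|_{\bfM(\bfy)}$, resp.\ $\|\bfw\|_{\bfA(\bfy)}\|\bfz\|_{\bfA(\bfy)}$. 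For \eqref{matrix difference bounds e_x} the norms are redistributed: the factor carrying $z_h$ goes into $L^\infty$ (resp.\ $W^{1,\infty}$), the factor involving $e_h^\theta$ into $L^2$ — which after \eqref{eq:lemma - theta-independence} produces $\|\nabla_{\Ga_h^0}e_h^0\|_{L^2}$, comparable to $\|\bfe\|_{\bfA(\bfy)}$ — and the remaining factor ($w_h^\theta$ or $\nabla_{\Ga_h^\theta}w_h^\theta$) into $L^2$; applying \eqref{eq:lemma - theta-independence} once more to replace all $\theta$-surface norms by their $\theta=0$ counterparts gives the stated bounds.

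\emph{Main obstacle.} The only genuine difficulty is the uniform-in-$\theta$ control underlying Step 1: the smallness is hypothesised for $e_h^0$ on $\Ga_h[\bfy]$, whereas the transport computations produce geometric quantities ($\nabla_{\Ga_h^\theta}\cdot e_h^\theta$, $D_{\Ga_h^\theta}e_h^\theta$, surface measures) living on the \emph{intermediate} surfaces $\Ga_h^\theta$. Closing this requires a short bootstrap from $\eps\le\tfrac14$: this smallness forces each $\Ga_h^\theta$, $\theta\in[0,1]$, to remain a mild perturbation of $\Ga_h^0$ with all metric coefficients comparable up to a fixed constant, which then feeds back into the estimates. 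Once this is established, the remainder is a routine application of Hölder's and Gronwall's inequalities.
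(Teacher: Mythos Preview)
Your proposal is correct and follows exactly the approach of the references the paper defers to: the paper does not give its own proof of this lemma but simply cites \cite[Lemma~4.1]{KLLP2017} and \cite[Lemma~7.2]{MCF}, noting that the results follow by the Leibniz rule. Your sketch --- the artificial $\theta$-flow, the transport theorems, the Gronwall-in-$\theta$ argument for part~(1), and the H\"older estimates on the integral representations \eqref{matrix difference M}--\eqref{matrix difference A} for parts~(3)--(4), including your correct identification of the bootstrap for uniform-in-$\theta$ control as the only nontrivial point --- is precisely the content of those cited lemmas.
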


\subsection{\bf Time evolving surfaces}
\begin{itemize}

\item
Let $\bfx:[0,T]\to\R^{3N}$ be a continuously differentiable vector defining a triangulated  surface $\Ga_h[\bfx(t)]$ for every $t\in[0,T]$ with time derivative $\bfv(t)=\dot\bfx(t)$ whose  finite element function $v_h(\cdot,t)$ satisfies
\begin{equation}
\label{vh-bound}
	\| \nabla_{\Ga_h[\bfx(t)]}v_h(\cdot,t) \|_{L^{\infty}(\Ga_h[\bfx(t)])} \le K_v, \qquad 0\le t \le T.
\end{equation}
With $\bfe=\bfx(t)-\bfx(s)=\int_s^t \bfv(r) \d r$, the bounds \eqref{matrix difference bounds} then yield the following bounds, which were first shown in Lemma~4.1 of \cite{DziukLubichMansour_rksurf}: for $0\le s, t \le T$ with $K_v|t-s| \le \tfrac14$, for arbitrary vectors $\bfw, \bfz \in \R^N$, 
we have with $C=c K_v$
\begin{equation}
\label{matrix difference bounds-t}
	\begin{aligned}
		\bfw^T \bigl(\bfM(\bfx(t))  - \bfM(\bfx(s))\bigr)\bfz \leq&\ C \, |t-s| \, \|\bfw\|_{\bfM(\bfx(t))}\|\bfz\|_{\bfM(\bfx(t))} , \\[1mm]
		\bfw^T \bigl(\bfA(\bfx(t))  - \bfA(\bfx(s))\bigr)\bfz \leq&\ C\,  |t-s| \, \|\bfw\|_{\bfA(\bfx(t))}\|\bfz\|_{\bfA(\bfx(t))}.   
	\end{aligned}
\end{equation}
Letting $s\to t$, this implies the bounds stated in Lemma~4.6 of~\cite{KLLP2017}:
\begin{equation}
\label{matrix derivative bounds}
	\begin{aligned}
		\bfw^T \diff \big( \bfM(\bfx(t)) \big) \bfz \leq &\ C \, \|\bfw\|_{\bfM(\bfx(t))}\|\bfz\|_{\bfM(\bfx(t))} , \\[1mm]
		\bfw^T \diff \big( \bfA(\bfx(t)) \big) \bfz \leq &\ C \, \|\bfw\|_{\bfA(\bfx(t))}\|\bfz\|_{\bfA(\bfx(t))} .
	\end{aligned}
\end{equation}
Moreover, by patching together finitely many intervals over which $K_v|t-s| \le \tfrac14$, we obtain that
\begin{equation}
\label{norm-equiv-t}
	\begin{aligned}
		&\ \text{the norms $\|\cdot\|_{\bfM(\bfx(t))}$ and the norms $\|\cdot\|_{\bfA(\bfx(t))}$}
		\\ &\
		\text{are $h$-uniformly equivalent for $0\le t \le T$.}
	\end{aligned}
\end{equation}
\end{itemize}

\subsection{\bf Variable coefficient  matrices}
\label{section:variable coefficient matrices}

Given  $\bfu, \bfV \in \R^N$ with associated finite element functions $u_h,V_h$ we define variable coefficient positive definite  mass matrix $\bfM(\bfx,\bfu,\bfV) \in \R^{N \times N}$ and positive semi-definite stiffness matrix $\bfA(\bfx,\bfu) \in \R^{N \times N}$:
\begin{align}
	\label{eq:solution-dependent mass matrix}
	\bfM(\bfx,\bfu,\bfV)|_{ij} = &\ \int_{\Ga_h[\bfx]} \!\!\!\! \pa_2 K(u_h,V_h) \ \phi_i[\bfx] \, \phi_j[\bfx] , \\
	\label{eq:solution-dependent stiffness matrix}
	\bfA(\bfx,\bfu)|_{ij} = &\ \int_{\Ga_h[\bfx]} \!\!\!\! \D(u_h) \, \nb_{\Ga_h[\bfx]} \phi_i[\bfx] \cdot \nb_{\Ga_h[\bfx]} \phi_j[\bfx] , 
\end{align}
for $i,j = 1,  \dotsc,N$.

The following lemma  is a variable coefficient variant of the estimates above  relating mass matrices, i.e.~\eqref{matrix difference bounds} and \eqref{matrix difference bounds e_x}. 

\begin{lemma}
\label{lemma:solution-dependent mass matrix diff}
	Let $\bfu, \us \in \R^N$ and $\bfV, \Vs \in \R^N$ be such that the corresponding finite element functions $u_h, u_h^*$ and $V_h, V_h^*$ have  $L^\infty$ norms bounded independently of $h$. Let \eqref{Epsass} hold. Then the following bounds hold, for arbitrary vectors $\bfw , \bfz \in \R^N$:
		\begin{align}
		\tag{i}
		\bfw^T \big( \bfM(\bfx,\bfu,\bfV) - \bfM(\bfy,\bfu,\bfV) \big) \bfz 
		&\leq C \, \| \nabla_{\Ga_h[y]} e_h^0 \|_{L^\infty(\Ga_h[\bfy])} \, \| \bfw \|_{\bfM(\bfy)} \, \| \bfz \|_{\bfM(\bfy)}, \\
		\tag{ii}
		\bfw^T \big( \bfM(\bfx,\bfu,\bfV) - \bfM(\bfy,\bfu,\bfV) \big) \bfz 
		&\leq C \, \|\bfe\|_{\bfA(\bfy)} \| \bfw \|_{\bfM(\bfy)} \, \| z_h \|_{L^\infty(\Ga_h[\bfy])} ,
		\end{align}
		and
		\begin{align}
		\nonumber
		&\ \bfw^T \big( \bfM(\bfx,\bfu,\bfV)-\bfM(\bfx,\us,\Vs) \big) \bfz \\
		\tag{iii}
		\leq &\ C \, \big( \| u_h - u_h^* \|_{L^\infty(\Ga_h[\bfy])} + \| V_h - V_h^* \|_{L^\infty(\Ga_h[\bfy])} \big) \, \| \bfw \|_{\bfM(\bfx)}  \, \| \bfz \|_{\bfM(\bfx)} , \\
		\nonumber
		&\ \bfw^T \big( \bfM(\bfx,\bfu,\bfV)-\bfM(\bfx,\us,\Vs) \big) \bfz \\
		\tag{iv}
		\leq &\ C \, \big( \| \bfu - \us \|_{\bfM(\bfx)} + \| \bfV - \Vs \|_{\bfM(\bfx)} \big) \, \| \bfw \|_{\bfM(\bfx)} \, \| z_h \|_{L^\infty(\Ga_h[\bfx])} .
		\end{align}
	The constant $C > 0$ is independent of $h$ and $t$, but depends on $\pa_2 K(u_h,V_h)$ for (i)--(ii) and on $\pa_2 K(u_h^*,V_h^*)$ for (iii)--(iv).
\end{lemma}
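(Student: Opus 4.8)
The plan is to split the four bounds into two groups handled by different mechanisms. Estimates (i) and (ii) compare the \emph{same} coefficient vectors $\bfu,\bfV$ on the two surfaces $\Ga_h[\bfx]$ and $\Ga_h[\bfy]$, and I would obtain them by the intermediate-surface / transport-theorem argument that produced \eqref{matrix difference M} and the bounds \eqref{matrix difference bounds}--\eqref{matrix difference bounds e_x}, simply carrying the extra scalar weight $\pa_2 K$ through the computation. Estimates (iii) and (iv) compare \emph{two} coefficient vectors $(\bfu,\bfV)$ and $(\us,\Vs)$ on the single surface $\Ga_h[\bfx]$, and I would get them by a direct H\"older estimate exploiting the assumed local Lipschitz continuity of $\pa_2 K$.

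For (i) and (ii): by \eqref{eq:solution-dependent mass matrix}, $\bfw^T\bfM(\bfy+\theta\bfe,\bfu,\bfV)\bfz = \int_{\Ga_h^\theta}\pa_2 K(u_h^\theta,V_h^\theta)\,w_h^\theta z_h^\theta$ for $\theta\in[0,1]$. The point I would emphasize is that each of $u_h^\theta,V_h^\theta,w_h^\theta,z_h^\theta$ is constant along material points of the flow $\theta\mapsto\Ga_h^\theta$ (a finite element function with fixed nodal vector has $\theta$-independent nodal-pullback values, cf.\ the transport property \eqref{eq:transport property of basis functions}), so the whole integrand has vanishing discrete material derivative and no differentiability of $\pa_2 K$ is needed. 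The transport theorem for the $L^2$ inner product with velocity $e_h^\theta$ then gives $\frac{\d}{\d\theta}\,\bfw^T\bfM(\bfy+\theta\bfe,\bfu,\bfV)\bfz = \int_{\Ga_h^\theta}\pa_2 K(u_h^\theta,V_h^\theta)\,w_h^\theta z_h^\theta\,(\nb_{\Ga_h^\theta}\cdot e_h^\theta)$, and integration over $\theta$ yields $\bfw^T(\bfM(\bfx,\bfu,\bfV)-\bfM(\bfy,\bfu,\bfV))\bfz = \int_0^1\!\int_{\Ga_h^\theta}\pa_2 K(u_h^\theta,V_h^\theta)\,w_h^\theta z_h^\theta\,(\nb_{\Ga_h^\theta}\cdot e_h^\theta)\,\d\theta$, i.e.\ \eqref{matrix difference M} with the weight $\pa_2 K$ inserted. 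From here the $L^\infty$ bounds on $u_h,V_h$ are inherited $\theta$-uniformly via \eqref{eq:lemma - theta-independence}, so $\pa_2 K(u_h^\theta,V_h^\theta)$ is $L^\infty(\Ga_h^\theta)$-bounded by a constant depending only on $\pa_2 K$ and those bounds; estimating $\nb_{\Ga_h^\theta}\cdot e_h^\theta$ in $L^\infty$ (bounded by $c\eps$) and $w_h^\theta,z_h^\theta$ in $L^2$, then returning to $\theta=0$ by \eqref{eq:lemma - theta-independence}, gives (i); estimating instead $\nb_{\Ga_h^\theta}e_h^\theta$ and $w_h^\theta$ in $L^2$ and $z_h^\theta$ in $L^\infty$, and using $\|\nb_{\Ga_h[\bfy]}e_h^0\|_{L^2(\Ga_h[\bfy])}=\|\bfe\|_{\bfA(\bfy)}$, gives (ii).

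For (iii) and (iv): on $\Ga_h[\bfx]$ one has $\bfw^T(\bfM(\bfx,\bfu,\bfV)-\bfM(\bfx,\us,\Vs))\bfz = \int_{\Ga_h[\bfx]}(\pa_2 K(u_h,V_h)-\pa_2 K(u_h^*,V_h^*))\,w_h z_h$. Since $u_h,V_h,u_h^*,V_h^*$ take values in a fixed compact set (their $L^\infty$ norms being $h$-uniformly bounded by hypothesis), the local Lipschitz continuity of $\pa_2 K$ supplies a uniform constant $L$ with $|\pa_2 K(u_h,V_h)-\pa_2 K(u_h^*,V_h^*)|\le L(|u_h-u_h^*|+|V_h-V_h^*|)$ on $\Ga_h[\bfx]$. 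Bounding this Lipschitz factor in $L^\infty$ and $w_h,z_h$ in $L^2$ gives (iii) (the $L^\infty$ norms over $\Ga_h[\bfx]$ and $\Ga_h[\bfy]$ being $h$-uniformly equivalent under \eqref{Epsass}); bounding $|u_h-u_h^*|+|V_h-V_h^*|$ and $w_h$ in $L^2$ and $z_h$ in $L^\infty$ gives (iv), using that $u_h-u_h^*$ and $V_h-V_h^*$ are the finite element functions on $\Ga_h[\bfx]$ with nodal vectors $\bfu-\us$ and $\bfV-\Vs$, so that $\|u_h-u_h^*\|_{L^2(\Ga_h[\bfx])}=\|\bfu-\us\|_{\bfM(\bfx)}$ and similarly for $\bfV$.

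The only nonroutine issue is the uniformity of every constant in $h$ (and, for (i)--(ii), also in $\theta$): this rests entirely on the hypothesis that the $L^\infty$ norms of $u_h,V_h,u_h^*,V_h^*$ are bounded independently of $h$, which confines the arguments of $\pa_2 K$ to a fixed compact set on which its supremum and its Lipschitz constant are uniform, together with the $\theta$-uniform estimates and norm equivalences collected in Result~\ref{results collection}. Everything else is H\"older's inequality and the definitions \eqref{eq:norms} relating finite element norms to $\bfM$ and $\bfA$.
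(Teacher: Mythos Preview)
Your proposal is correct and follows essentially the same approach as the paper, which defers the details to \cite[Lemma~6.1]{MCF_generalised}; the structure you describe---intermediate-surface transport argument with the weight $\pa_2 K$ carried through (using that its discrete material derivative vanishes) for (i)--(ii), and a direct H\"older/Lipschitz estimate on $\Ga_h[\bfx]$ for (iii)--(iv)---is exactly the template the paper spells out in full for the analogous stiffness-matrix Lemma~\ref{lemma:solution-dependent stiffness matrix diff}.
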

\begin{proof} The proof is an adaptation of the proof in \cite[Lemma~6.1]{MCF_generalised}, \blueon and it uses similar techniques as the proof of Lemma~\ref{lemma:solution-dependent stiffness matrix diff} below. \blueoff \qed
\end{proof}

We will also need the stiffness matrix analogue of Lemma~\ref{lemma:solution-dependent mass matrix diff}.
\begin{lemma}
	\label{lemma:solution-dependent stiffness matrix diff}
	Let $\bfu \in \R^N$ and $\us \in \R^N$ be such that the corresponding finite element functions $u_h$ and $u_h^*$ have bounded $L^\infty$ norms. 
	Let \eqref{Epsass} hold. Then the following bounds hold:
		\begin{align}
		\tag{i}
		\bfw^T \big( \bfA(\bfx,\us) - \bfA(\bfy,\us) \big) \bfz 
		&\leq C \, \|\bfe\|_{\bfA(\bfx)} \| \bfw \|_{\bfA(\bfx)} \, \|\nb_{\Ga_h[\bfy]} z_h \|_{L^\infty(\Ga_h[\bfy])} ,
		\end{align}
		and
		\begin{align}
		\tag{ii}
		\bfw^T \big( \bfA(\bfx,\bfu) - \bfA(\bfx,\us) \big) \bfz 
		&\leq C \, \| \bfu - \us \|_{\bfM(\bfx)} \, \| \bfw \|_{\bfA(\bfx)} \, \| \nb_{\Ga_h[\bfx]} z_h \|_{L^\infty(\Ga_h[\bfx])} .
		\end{align}
	The constant $C > 0$ is independent of $h$ and $t$.
\end{lemma}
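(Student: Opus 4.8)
The two bounds are the stiffness-matrix analogues of Lemma~\ref{lemma:solution-dependent mass matrix diff}, so I would follow the same two-step pattern used there (and in \cite[Lemma~6.1]{MCF_generalised}), with the Dirichlet form replacing the $L^2$ inner product. Estimate~(ii) compares coefficients on a \emph{fixed} surface and is the easier one: by the definition \eqref{eq:solution-dependent stiffness matrix}, the left-hand side equals $\int_{\Ga_h[\bfx]} (\D(u_h)-\D(u_h^*))\, \nb_{\Ga_h[\bfx]} w_h \cdot \nb_{\Ga_h[\bfx]} z_h$. Since $u_h$ and $u_h^*$ have $h$-uniformly bounded $L^\infty$ norms and $\D$ is locally Lipschitz by Assumption~\ref{eq:assumptions D}, I would estimate $|\D(u_h)-\D(u_h^*)| \le L\,|u_h-u_h^*|$ pointwise, pull $\|\nb_{\Ga_h[\bfx]} z_h\|_{L^\infty(\Ga_h[\bfx])}$ out of the integral, and close with the Cauchy--Schwarz inequality, recognising $\|u_h-u_h^*\|_{L^2(\Ga_h[\bfx])} = \|\bfu-\us\|_{\bfM(\bfx)}$ and $\|\nb_{\Ga_h[\bfx]} w_h\|_{L^2(\Ga_h[\bfx])} = \|\bfw\|_{\bfA(\bfx)}$ via \eqref{eq:norms}. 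That is essentially all of~(ii).

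For estimate~(i), which compares the \emph{same} coefficient on the two surfaces $\Ga_h[\bfx]$ and $\Ga_h[\bfy] = \Ga_h[\bfx-\bfe]$, I would use the intermediate-surface family $\Ga_h^\theta = \Ga_h[\bfy+\theta\bfe]$, $\theta\in[0,1]$, together with the moving finite element functions $w_h^\theta, z_h^\theta, e_h^\theta$ and the coefficient function $\xi_h^\theta := \sum_{j} u_j^* \, \phi_j[\bfy+\theta\bfe]$ carrying the fixed nodal vector $\us$. The crucial point is that the nodal values of $\xi_h^\theta$ do not depend on $\theta$, so the transport property \eqref{eq:transport property of basis functions} gives $\mat_h \xi_h^\theta = 0$, hence also $\mat_h \D(\xi_h^\theta) = \D'(\xi_h^\theta)\, \mat_h \xi_h^\theta = 0$. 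Differentiating $\theta \mapsto \bfw^T \bfA(\bfy+\theta\bfe,\us)\bfz$ by the Leibniz rule for moving surfaces — exactly as in the derivation of \eqref{matrix difference A}, but now carrying the inert factor $\D(\xi_h^\theta)$ and using $\mat_h w_h^\theta = \mat_h z_h^\theta = 0$ — then yields
\[
\bfw^T \big(\bfA(\bfx,\us) - \bfA(\bfy,\us)\big) \bfz = \int_0^1 \int_{\Ga_h^\theta} \D(\xi_h^\theta)\, \nb_{\Ga_h^\theta} w_h^\theta \cdot \big(D_{\Ga_h^\theta} e_h^\theta\big)\, \nb_{\Ga_h^\theta} z_h^\theta \; \d\theta .
\]

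From here the estimate is routine. I would bound the integrand by $D_1\, |\nb_{\Ga_h^\theta} w_h^\theta|\, |\nb_{\Ga_h^\theta} e_h^\theta|\, |\nb_{\Ga_h^\theta} z_h^\theta|$ using $\D \le D_1$ (Assumption~\ref{eq:assumptions D}) and $|D_{\Ga_h^\theta} e_h^\theta| \le c\, |\nb_{\Ga_h^\theta} e_h^\theta|$, pull out $\|\nb_{\Ga_h^\theta} z_h^\theta\|_{L^\infty(\Ga_h^\theta)}$, apply Cauchy--Schwarz to the remaining two factors, and then convert everything to the reference surfaces via the $\theta$-uniform $L^p$ bounds and norm equivalences of Lemma~\ref{results collection}, which hold under the smallness condition \eqref{Epsass}: \eqref{eq:lemma - theta-independence} turns $\|\nb_{\Ga_h^\theta} z_h^\theta\|_{L^\infty(\Ga_h^\theta)}$ into $\|\nb_{\Ga_h[\bfy]} z_h\|_{L^\infty(\Ga_h[\bfy])}$ and the two $L^2$ factors into $\|\bfw\|_{\bfA(\bfx)}$ and $\|\bfe\|_{\bfA(\bfx)}$, up to $h$-independent constants; integrating over $\theta$ then gives the claim. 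I expect the only genuinely delicate step to be the variable-coefficient version of the Leibniz identity \eqref{matrix difference A}, i.e.\ checking that the factor $\D(\xi_h^\theta)$ contributes no additional term precisely because its material derivative along the intermediate evolution vanishes; this is the stiffness-matrix counterpart of the corresponding step for the mass matrix in \cite[Lemma~6.1]{MCF_generalised}. Everything after that — the pointwise bound on $D_{\Ga_h^\theta} e_h^\theta$, extracting the $L^\infty$ norm of $\nb_{\Ga_h^\theta} z_h^\theta$, and the surface-to-surface transfer — is bookkeeping on already-available lemmas, and the $L^\infty$ hypotheses on $u_h, u_h^*$ enter only through the local Lipschitz bound on $\D$ used in~(ii).
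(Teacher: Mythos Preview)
Your proposal is correct and follows essentially the same route as the paper. For (ii) the arguments are identical. For (i) you invoke the weighted Dirichlet transport identity in one stroke (the coefficient $\D(\xi_h^\theta)$ riding along since its material derivative vanishes), whereas the paper expands via the basic Leibniz formula into three terms (two with $\mat_{\Ga_h^\theta}(\nb_{\Ga_h^\theta} w_h^\theta)$ and $\mat_{\Ga_h^\theta}(\nb_{\Ga_h^\theta} z_h^\theta)$, one with $D_{\Ga_h^\theta} e_h^\theta$) and then applies the interchange formula \eqref{eq:mat-grad formula} to bound the first two; your one-line identity is the result of that computation once $\mat_{\Ga_h^\theta} w_h^\theta = \mat_{\Ga_h^\theta} z_h^\theta = 0$ is used, so the difference is purely expository.
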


\begin{proof}
The proof is similar to the proof of \blueon \cite[Lemma~6.1]{MCF_generalised}. \blueoff 

(i) Using the fundamental theorem of calculus and the Leibniz formula \cite[Lemma~2.2]{DziukElliott_ESFEM}, and recalling \eqref{matrix difference A}, we obtain
\begin{equation}
\label{eq:c-stiff diff - first identity}
	\begin{aligned}
		&\ \bfw^T \big( \bfA(\bfx,\us) - \bfA(\bfy,\us) \big) \bfz \\
		= &\ \int_{\Ga_h^1} \D(u_h^{*,1}) \, \nb_{\Ga_h^1} w_h^1 \cdot \nb_{\Ga_h^1} z_h^1 
		- \int_{\Ga_h^0} \D(u_h^{*,0}) \, \nb_{\Ga_h^0} w_h^0 \cdot \nb_{\Ga_h^0} z_h^0 \\
		= &\  \int_0^1 \frac{\d}{\d \theta} \int_{\Ga_h^\theta} \D(u_h^{*,\theta}) \, \nb_{\Ga_h^\theta} w_h^\theta \cdot \nb_{\Ga_h^\theta} z_h^\theta \d \theta \\
		= &\
		\int_0^1 \int_{\Ga_h^\theta} \D(u_h^{*,\theta}) \, \mat_{\Ga_h^\theta} (\nb_{\Ga_h^\theta} w_h^\theta) \cdot \nb_{\Ga_h^\theta} z_h^\theta \d \theta \\
		&\ + \int_0^1 \int_{\Ga_h^\theta} \D(u_h^{*,\theta}) \, \nb_{\Ga_h^\theta} w_h^\theta \cdot \mat_{\Ga_h^\theta} (\nb_{\Ga_h^\theta} z_h^\theta) \d \theta \\
		&\ + \int_0^1 \int_{\Ga_h^\theta} \D(u_h^{*,\theta}) \, \nb_{\Ga_h^\theta} w_h^\theta \cdot \big( D_{\Ga_h^\theta} e_h^\theta \big) \nb_{\Ga_h^\theta} z_h^\theta \d \theta ,
	\end{aligned}
\end{equation}
where we used that the due to the $\theta$-independence of $u_h^{*,\theta}$ we have $\mat_{\Ga_h^\theta} u_h^{*,\theta} = 0$, and hence $\mat_{\Ga_h^\theta} ( \D(u_h^{*,\theta})) = 0$. 

For the first two terms we use the interchange formula \cite[Lemma~2.6]{DziukKronerMuller}, for any $w_h \colon \Ga_h \to \R$:
\begin{equation}
\label{eq:mat-grad formula}
	\mat_{\Ga_h^\theta} ( \nb_{\Ga_h^\theta} w_h^\theta ) 
	= \nb_{\Ga_h^\theta} \mat_{\Ga_h^\theta} w_h^\theta	- \bigl(\nb_{\Ga_h^\theta} e_h^\theta - \nu_{\Ga_h^\theta} (\nu_{\Ga_h^\theta})^T (\nb_{\Ga_h^\theta} e_h^\theta )^T \bigr) \nb_{\Ga_h^\theta} w_h^\theta , 
\end{equation}
where $e_h^\theta$ is the velocity and $\nu_{\Ga_h^\theta}$ is the normal vector of the surface $\Ga_h^\theta$, the material derivative associated to $e_h^\theta$ is denoted by $\mat_{\Ga_h^\theta}$.

Using \eqref{eq:mat-grad formula} and recalling $\mat_{\Ga_h^\theta} w_h^\theta = \mat_{\Ga_h^\theta} z_h^\theta = 0$, for \eqref{eq:c-stiff diff - first identity} we obtain the estimate
\begin{equation*}
	\begin{aligned}
		&\ \bfw^T \big( \bfA(\bfx,\us) - \bfA(\bfy,\us) \big) \bfz \\
		\leq &\ c \int_0^1  \|\D(u_h^{*,\theta})\|_{L^\infty(\Ga_h^\theta)} \|\nb_{\Ga_h^\theta} w_h^\theta\|_{L^2(\Ga_h^\theta)} \|\nb_{\Ga_h^\theta} e_h^\theta\|_{L^2(\Ga_h^\theta)} \| \nb_{\Ga_h^\theta} z_h^\theta\|_{L^\infty(\Ga_h^\theta)} \d \theta \\
		\leq &\ C \, 
		\|\bfw\|_{\bfA(\bfx)} \|\bfe\|_{\bfA(\bfx)} \| \nb_{\Ga_h[\bfy]} z_h\|_{L^\infty(\Ga_h[\bfy])} ,
	\end{aligned}
\end{equation*}
where for the last estimate we used the norm equivalences \eqref{eq:lemma - theta-independence}, and the assumed $L^\infty$ bound on $u_h^*$.

(ii) The second estimate is proved using a similar idea, now working only on the surface $\Ga_h[\bfx]$:
\begin{align*}
	&\ \bfw^T \big( \bfA(\bfx,\bfu) - \bfA(\bfx,\us) \big) \bfz \\
	= &\ \int_{\Ga_h[\bfx]} \!\!\!\!\!\! \big( \D(u_h^*) - \D(u_h) \big) \nb_{\Ga_h[\bfx]} w_h \cdot \nb_{\Ga_h[\bfx]} z_h \\
	\leq &\ C \, \|u_h^* - u_h\|_{L^2(\Ga_h[\bfx])} \|\nb_{\Ga_h[\bfx]} w_h\|_{L^2(\Ga_h[\bfx])} \|\nb_{\Ga_h[\bfx]} z_h\|_{L^\infty(\Ga_h[\bfx])} ,
\end{align*}
using the $L^\infty$ boundedness of $u_h$ and $u_h^*$ together with the local Lipschitz continuity of $\D$. \qed 
\end{proof}

As a consequence of the boundedness below of the nonlinear functions $  \pa_2 K(\cdot,\cdot)$ and $\D(\cdot)$, we note here that the matrices $\bfM(\bfx,\bfu,\bfV)$ and $\bfA(\bfx,\bfu)$ (for any $\bfu$ and $\bfV$, with corresponding $u_h$ and $V_h$ in $S_h[\bfx]$)  generate solution-dependent (semi-)norms:
\begin{align*}
	\|\bfz\|_{\bfM(\bfx,\bfu,\bfV)}^{2} = &\ \bfz^T \bfM(\bfx,\bfu,\bfV) \bfz = \int_{\Ga_h[\bfx]} \pa_2 K(u_h,V_h)  \, |z_h|^2 , \\
	\|\bfz\|_{\bfA(\bfx,\bfu)}^{2} = &\ \bfz^T \bfA(\bfx,\bfu) \bfz = \int_{\Ga_h[\bfx]} \D(u_h) \, \blueon |\nbgh z_h|^2 \blueoff ,
\end{align*}
equivalent (independently of $h$ and $t$) to $\|\cdot\|_{\bfM(\bfx)}$ and $\|\cdot\|_{\bfA(\bfx)}$, respectively. The following $h$-independent  equivalence between the $\bfA(\bfx)$ and $\bfA(\bfx,\bfu)$ norms follows by Assumption \ref{eq:assumptions D} on $\mathcal D(\cdot)$: for any $\bfz \in \R^N$
\begin{equation}
\label{eq:Axu norm equivalence}
	c_0 \|\bfz\|_{\bfA(\bfx)}^2 \leq \|\bfz\|_{\bfA(\bfx,\bfu)}^2 \leq c_1 \|\bfz\|_{\bfA(\bfx)}^2 .
\end{equation} 
The equivalence for the $\bfM(\bfx)$ and $\bfM(\bfx,\bfu,\bfV)$ norms will be proved later on.

\subsection{Variable coefficient matrices for time evolving surfaces}

Similarly to \eqref{matrix difference bounds-t}, we will need a result comparing the matrices $\bfA(\bfx,\us)$ at different times. Particularly important will be the $\bfA(\bfx,\us)$ variant of \eqref{matrix derivative bounds}.

\begin{lemma}
\label{lemma:solution-dependent stiffness matrix time difference}
	Let $\us \colon [0,T] \to \R^N$ be such that for all $t$ the corresponding finite element function $u_h^*$ satisfies $\|\mat_h u_h^* \|_{L^\infty(\Ga_h[\bfx])} \leq R$ and $\|\D'(u_h^*)\|_{L^\infty(\Ga_h[\bfx])} \leq R$ for $0 \leq t \leq T$.
	Then the following bounds hold, for $0\le s, t \le T$ with $K_v|t-s| \le \tfrac14$,
	\begin{equation}
	\label{u-stiffness matrix difference bounds-t}
	\bfw^T \big( \bfA(\bfx(t),\us(t))  - \bfA(\bfx(s),\us(s)) \big) \bfz \leq C \, |t-s| \, \|\bfw\|_{\bfA(\bfx(t))}\|\bfz\|_{\bfA(\bfx(t))},
	\end{equation}
	\begin{equation}
	\label{u-stiffness matrix derivative}
	\bfw^T \diff \big( \bfA(\bfx(t),\us(t)) \big) \bfz \leq C \, \|\bfw\|_{\bfA(\bfx(t))}\|\bfz\|_{\bfA(\bfx(t))} .
	\end{equation}
	where the constant $C > 0$ is independent of $h$ and $t$, but depends on $R^2$.
	
\end{lemma}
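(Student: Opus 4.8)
The plan is to reuse the strategy from Lemma~\ref{lemma:solution-dependent stiffness matrix diff}(i) but now viewing the evolution with respect to the \emph{time} variable rather than the artificial $\theta$-parameter, while carefully tracking the fact that $u_h^*$ is itself time-dependent. First I would fix $s \le t$ and regard $\Ga_h[\bfx(t)]$ as an evolution of $\Ga_h[\bfx(s)]$ with velocity $v_h$; writing $\eps = \| \nb_{\Ga_h[\bfx(s)]} e_h \|_{L^\infty}$ with $\bfe = \int_s^t \bfv(r)\,\d r$, the bound $K_v|t-s|\le \tfrac14$ gives $\eps \le \tfrac14$, so Lemma~\ref{results collection} applies. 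Then I would use the fundamental theorem of calculus and the Leibniz formula \cite[Lemma~2.2]{DziukElliott_ESFEM} to write
\begin{equation*}
	\bfw^T \big( \bfA(\bfx(t),\us(t)) - \bfA(\bfx(s),\us(s)) \big) \bfz
	= \int_s^t \frac{\d}{\d r} \int_{\Ga_h[\bfx(r)]} \D(u_h^*) \, \nb_{\Ga_h[\bfx(r)]} w_h \cdot \nb_{\Ga_h[\bfx(r)]} z_h \, \d r ,
\end{equation*}
where $w_h, z_h$ are transported with the surface (so $\mat_h w_h = \mat_h z_h = 0$). Differentiating under the integral with the transport theorem produces four terms: one with $\mat_h \big(\D(u_h^*)\big) = \D'(u_h^*) \, \mat_h u_h^*$, two with $\mat_h(\nb w_h)$ and $\mat_h(\nb z_h)$ handled via the interchange formula \eqref{eq:mat-grad formula}, and one with the $D_{\Ga_h[\bfx(r)]} v_h$ factor from the change of the area element.

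Each of the four terms is then estimated in $L^\infty$--$L^2$--$L^2$ fashion: the $\D(u_h^*)$ and $\D'(u_h^*)$ factors are bounded by $D_1$ and $R$ respectively, the $\mat_h u_h^*$ factor by $R$, and the gradient-of-velocity factors $\nb_{\Ga_h[\bfx(r)]} v_h$ by $K_v$ via \eqref{vh-bound}; the remaining two $\nb w_h$, $\nb z_h$ factors give $\|\bfw\|_{\bfA(\bfx(r))} \|\bfz\|_{\bfA(\bfx(r))}$. Using the $h$-uniform equivalence \eqref{matrix difference bounds-t} (or \eqref{eq:lemma - theta-independence}) to replace the norms at time $r\in[s,t]$ by norms at time $t$, and integrating over $[s,t]$, yields \eqref{u-stiffness matrix difference bounds-t} with $C$ depending on $K_v$, $D_1$ and $R^2$ (the $R^2$ coming from the product $\D'(u_h^*)\,\mat_h u_h^*$, each factor contributing an $R$, actually $R$; the square appears when one also absorbs $\|\D'\|_\infty \le R$ together with the implicit bound on $u_h^*$ — I would state it as "depends on $R^2$" to match the claim). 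Letting $s \to t$ in \eqref{u-stiffness matrix difference bounds-t}, dividing by $|t-s|$, gives the derivative bound \eqref{u-stiffness matrix derivative} directly, exactly as \eqref{matrix derivative bounds} follows from \eqref{matrix difference bounds-t}.

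The main obstacle — though it is more bookkeeping than conceptual — is the term involving $\mat_h\big(\D(u_h^*)\big)$, which is the genuinely new ingredient compared to the constant-coefficient Lemma in the time-evolving setting of Section~4.1 and compared to Lemma~\ref{lemma:solution-dependent stiffness matrix diff}(i), where $u_h^{*,\theta}$ was $\theta$-independent and this term vanished. Here it does not vanish, so one must invoke the hypotheses $\|\mat_h u_h^*\|_{L^\infty} \le R$ and $\|\D'(u_h^*)\|_{L^\infty} \le R$ precisely to control $\|\mat_h(\D(u_h^*))\|_{L^\infty} = \|\D'(u_h^*)\,\mat_h u_h^*\|_{L^\infty} \le R^2$, which is where the stated $R^2$-dependence of the constant originates. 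The only mild subtlety is that $\mat_h u_h^*$ is the \emph{discrete} material derivative of the interpolation $u_h^*$ — not the lift of $\mat u$ — but since $u_h^*$ interpolates the smooth exact solution, the assumed bound is a legitimate regularity hypothesis that will be verified elsewhere; I would simply quote it. Everything else is a routine repetition of the arguments already carried out in the proofs of Lemma~\ref{results collection} and Lemma~\ref{lemma:solution-dependent stiffness matrix diff}.
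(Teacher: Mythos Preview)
Your proposal is correct and follows essentially the same approach as the paper's proof: fundamental theorem of calculus plus the Leibniz/transport formula, then bound the $\D'(u_h^*)\,\mat_h u_h^*$ term by $R^2$ and the velocity-gradient terms by $K_v$, followed by the $s\to t$ limit for \eqref{u-stiffness matrix derivative}. The only cosmetic difference is that the paper invokes the transport theorem for the Dirichlet form directly (yielding just two terms, since the $\mat_h(\nb w_h)$ and $\mat_h(\nb z_h)$ contributions together with the area term are already packaged into the single $D_{\Ga_h[\bfx(r)]} v_h$ term), whereas you unpack these via the interchange formula \eqref{eq:mat-grad formula} into four terms; also, the norm equivalence you need at the end is \eqref{norm-equiv-t}, not \eqref{matrix difference bounds-t}.
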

\begin{proof} We follow the ideas of the proofs of \cite[Lemma~4.1]{DziukLubichMansour_rksurf} and \cite[Lemma~4.1]{KLLP2017}. Similarly to the proof of Lemma~\ref{results collection} (see~\cite{MCF}), by the fundamental theorem of calculus and the Leibniz formula, and recalling that $\mat_h w_h = \mat_h z_h = 0$, we obtain
	\begin{equation}
	\label{eq:c-stiffnes matrix difference}
	\begin{aligned}
	&\ \bfw^T \big( \bfA(\bfx(t),\us(t))  - \bfA(\bfx(s),\us(s)) \big) \bfz \\
	= &\ \int_s^t \frac{\d}{\d r} \int_{\Ga_h[\bfx(r)]} \!\!\!\!\!\! \D(u_h^*) \nb_{\Ga_h[\bfx(r)]} w_h \cdot \nb_{\Ga_h[\bfx(r)]} z_h \ \d r \\
	= &\ \int_s^t \int_{\Ga_h[\bfx(r)]} \! \frac{\d}{\d r} \Big( \D(u_h^*) \Big) \nb_{\Ga_h[\bfx(r)]} w_h \cdot \nb_{\Ga_h[\bfx(r)]} z_h \ \d r \\
	&\ + \int_s^t \int_{\Ga_h[\bfx(r)]} \!\!\!\!\!\! \D(u_h^*)  \nb_{\Ga_h[\bfx(r)]} w_h \cdot \big( D_{\Ga_h[\bfx(r)]} v_h \big) \nb_{\Ga_h[\bfx(r)]} z_h \ \d r ,
	\end{aligned}
	\end{equation}
	where the first order differential operator $D_{\Ga_h[\bfx]}$ is given after \eqref{matrix difference A}.
	
	Similarly as for \eqref{matrix difference bounds-t}, using the bound \eqref{vh-bound} we obtain that
	\begin{equation*}
	\| D_{\Ga_h[\bfx(r)]} v_h(\cdot,r) \|_{L^\infty(\Ga_h[\bfx(r)])} \leq c \| \nabla_{\Ga_h[\bfx(r)]}v_h(\cdot,r) \|_{L^{\infty}(\Ga_h[\bfx(r)])} \leq c \, K_v .
	\end{equation*}
	On the other hand, using the uniform upper bound on the growth of the diffusion coefficient $\D$ (Assumption~\ref{eq:assumptions D}) and the assumed $L^\infty$ bound $\|\mat_h u_h^* \|_{L^\infty(\Ga_h[\bfx])} \leq R$, we have the bound
	\begin{align*}
	\Big\| \frac{\d}{\d r} \Big( \D(u_h^*(\cdot,r)) \Big) \Big\|_{L^\infty(\Ga_h[\bfx(r)])} 
	= &\ \| \D'(u_h^*(\cdot,r)) \, \mat_h u_h^*(\cdot,r) \|_{L^\infty(\Ga_h[\bfx(r)])} 
	\leq R^2 .
	\end{align*} 
	
	By applying the H\"older inequality to \eqref{eq:c-stiffnes matrix difference}, and combining it with the above estimates, we obtain 
	\begin{equation*}
	\begin{aligned}
	&\ \bfw^T \big( \bfA(\bfx(t),\us(t))  - \bfA(\bfx(s),\us(s)) \big) \bfz \\
	\leq &\ c \int_s^t  \| \nb_{\Ga_h[\bfx(r)]} w_h \|_{L^2\Ga_h[\bfx(r)]} \| \nb_{\Ga_h[\bfx(r)]} z_h\|_{L^2\Ga_h[\bfx(r)]} \, \d r .
	\end{aligned}
	\end{equation*}
	The proof of \eqref{u-stiffness matrix difference bounds-t} is then finished using the $h$-uniform norm equivalence in time \eqref{norm-equiv-t}. 
	
	\blueon Dividing \blueoff \eqref{u-stiffness matrix difference bounds-t} by $t-s$ and letting $s \to t$ yields \eqref{u-stiffness matrix derivative}. \qed
\end{proof}

\section{\bf Finite element semi-discretisations of the coupled problem}
\label{section:semi-discretisation}
We present two evolving surface  finite element discretisations of {\bf Problems \ref{P1}, \ref{P2}}. 
In the following  we use the notation $A_h = \half (\nb_{\Ga_h[\bfx]} \nu_h + (\nb_{\Ga_h[\bfx]} \nu_h)^T)$ for the symmetric part of $\nb_{\Ga_h[\bfx]} \nu_h$,  $|\cdot|$ for the Frobenius norm and 
the abbreviations $\pa_j K_h := \pa_j K(u_h,V_h)$, $\pa_j F_h := \pa_j F(u_h,V_h)$ ($j = 1,2$).  We set  $\widetilde I_h = \widetilde I_h[\bfx] \colon C(\Ga_h[\bfx]) \to S_h(\Ga_h[\bfx])$ to be  the finite element interpolation operator on the discrete surface $\Ga_h[\bfx]$. 

%
%

\begin{problem}  \label{SemiDiscreteP1}
Find the finite element functions  $X_h(\cdot,t)\in S_h[\bfx^0]^3$,  $\nu_h(\cdot,t)\in S_h[\bfx(t)]^3$, $V_h(\cdot,t) \in S_h[\bfx(t)]$  and  $u_h(\cdot,t)\in S_h[\bfx(t)]$ such that for $t>0$ 
and for all $\vphi_h^\nu\ct \in S_h[\bfx(t)]^3$, $\vphi_h^V\ct \in S_h[\bfx(t)]$, and $\vphi_h^u\ct \in S_h[\bfx(t)]$ with discrete material derivative $\mat_h \vphi_h^u\ct \in S_h[\bfx(t)]$:
\begin{subequations}
\begin{align}\label{eq:semi-discrete problem - P1}
	&\ v_h = \widetilde I_h (V_h \n_h) , \\ 
	&\ \int_{\Ga_h[\bfx]} \!\!\!\! \pa_2 K_h \, \mat_h \nu_h \cdot \vphi_h^\nu + \int_{\Ga_h[\bfx]} \!\!\!\! \nb_{\Ga_h[\bfx]} \nu_h \cdot \nb_{\Ga_h[\bfx]} \vphi_h^\nu 
	\nonumber  \\ &\ \qquad\qquad\qquad 
	= \int_{\Ga_h[\bfx]} \!\!\!\! |A_h|^2 \nu_h \cdot \vphi_h^\nu + \int_{\Ga_h[\bfx]} \!\!\!\! \pa_1 K_h \nb_{\Ga_h[\bfx]} u_h \cdot \vphi_h^\nu  , \\ 
	&\ \int_{\Ga_h[\bfx]} \!\!\!\! \pa_2 K_h \, \mat_h V_h \, \vphi_h^V + \int_{\Ga_h[\bfx]} \!\!\!\! \nb_{\Ga_h[\bfx]} V_h \cdot \nb_{\Ga_h[\bfx]} \vphi_h^V 
	\nonumber  \\ &\ \qquad\qquad\qquad 
	= \int_{\Ga_h[\bfx]} \!\!\!\! |A_h|^2 V_h \vphi_h^V - \int_{\Ga_h[\bfx]} \!\!\!\! \pa_1 K_h \, \mat_h u_h \, \vphi_h^V , \\
	\label{eq:semi-discrete - P1 - diffusion eq}
	&\ \diff \bigg( \int_{\Ga_h[\bfx]} \!\!\!\! u_h \, \vphi_h^u \bigg) + \int_{\Ga_h[\bfx]} \!\!\!\! \D(u_h) \, \nb_{\Ga_h[\bfx]} u_h \cdot \nb_{\Ga_h[\bfx]} \vphi_h^u = \int_{\Ga_h[\bfx]} \!\!\!\! u_h \, \mat_h \vphi_h^u , \\
	&\ \pa_t X_h(p_h,t) =  v_h . 
\end{align}
\end{subequations}

The initial values for the finite element functions solving this  system  are chosen to be the Lagrange interpolations on the initial surface  of the corresponding data for the PDE, $X^0, \nu^0,V^0 $ and $u^0$.
The initial  data is assumed consistent to be with the equation $V^0=-F(u^0,H^0)$.
\end{problem}

\begin{problem} \label{SemiDiscreteP2}

Find the finite element functions  $X_h(\cdot,t)\in S_h[\bfx^0]^3$,  $\nu_h(\cdot,t)\in S_h[\bfx(t)]^3$, $H_h(\cdot,t) \in S_h[\bfx(t)]$  and  $u_h(\cdot,t)\in S_h[\bfx(t)]$ such that for $t>0$ 
and for all $\vphi_h^\nu\ct \in S_h[\bfx(t)]^3$, $\vphi_h^H\ct \in S_h[\bfx(t)]$, and $\vphi_h^u\ct \in S_h[\bfx(t)]$ with discrete material derivative $\mat_h \vphi_h^u\ct \in S_h[\bfx(t)]$:
\begin{subequations}
\label{eq:semi-discrete problem - P2}
	\begin{align}
	&\ v_h = \widetilde I_h (V_h \n_h) , \\ 
	&\ \int_{\Ga_h[\bfx]} \! \frac{1}{\pa_2 F_h} \, \mat_h \nu_h \cdot \vphi_h^\nu + \int_{\Ga_h[\bfx]} \!\!\!\! \nb_{\Ga_h[\bfx]} \nu_h \cdot \nb_{\Ga_h[\bfx]} \vphi_h^\nu 
	\nonumber  \\ &\ \qquad\qquad\qquad 
	= \int_{\Ga_h[\bfx]} \!\!\!\! |A_h|^2 \nu_h \cdot \vphi_h^\nu + \int_{\Ga_h[\bfx]} \! \frac{\pa_1 F_h}{\pa_2 F_h} \nb_{\Ga_h[\bfx]} u_h \cdot \vphi_h^\nu  , \\ 
	&\  \int_{\Ga_h[\bfx]} \!\!\!\! \mat H_h \, \vphi_h^H
	\blueon  - \int_{\Ga_h[\bfx]} \!\!\!\! \nb_{\Ga_h[\bfx]} V_h \cdot \nb_{\Ga_h[\bfx]} \vphi_h^H = - \int_{\Ga_h[\bfx]} \!\!\!\! |A_h|^2 V_h \vphi_h^H , \blueoff \\
	&\ \label{eq:semi-discrete - P2 - diffusion eqo} \diff \bigg( \int_{\Ga_h[\bfx]} \!\!\!\! u_h \, \vphi_h^u \bigg) + \int_{\Ga_h[\bfx]} \!\!\!\! \D(u_h) \, \nb_{\Ga_h[\bfx]} u_h \cdot \nb_{\Ga_h[\bfx]} \vphi_h^u = \int_{\Ga_h[\bfx]} \!\!\!\! u_h \, \mat_h \vphi_h^u , \\
	&\ \int_{\Ga_h[\bfx]} \!\!\!\! V_h \vphi_h^V + \int_{\Ga_h[\bfx]} \!\!\!\! F(u_h,H_h) \, \vphi_h^V = 0 , \\
	&\ \pa_t X_h(p_h,t) =  v_h . 
\end{align}
\end{subequations}
The initial values for the finite element functions solving this system are chosen to be the Lagrange interpolations on the initial surface of the corresponding data for the PDE, $X^0, \nu^0,H^0 $ and $u^0$.

\end{problem}

\begin{remark}
\label{remark:semi-discrete mass conservation}
-- We note that, in view of the discrete transport property \eqref{eq:transport property of basis functions}, the last term in each of \eqref{eq:semi-discrete - P1 - diffusion eq} and \eqref{eq:semi-discrete - P2 - diffusion eqo} vanishes for all basis functions $\vphi_h^u = \phi_j[\bfx]$. 

-- Also by testing \eqref{eq:semi-discrete - P1 - diffusion eq} and \eqref{eq:semi-discrete - P2 - diffusion eqo} by $\vphi_h^u \equiv 1 \in S_h[\bfx]$ we observe that 
both semi-discrete systems preserve the mass conservation property of the continuous flow, cf.~Section~\ref{section:examples and properties}.  
\end{remark}
 \begin{remark}
Note that the approximate normal vector $\n_h$ and the approximate mean curvature $H_h$ are finite element functions $\n_h(\cdot,t) = \sum_{j=1}^\dof \nu_j(t) \, \phi_j[\bfx(t)] \in S_h[\bfx]^3$ and $H_h(\cdot,t) = \sum_{j=1}^\dof H_j(t) \, \phi_j[\bfx(t)] \in S_h[\bfx]^3$, respectively, and are \emph{not} the normal vector and the mean curvature of the discrete surface $\Ga_h[\bfx(t)]$. Similarly $V_h(\cdot,t) = \sum_{j=1}^\dof V_j(t) \, \phi_j[\bfx(t)] \in S_h[\bfx]$ is \emph{not} the normal velocity of $\Ga_h[\bfx(t)]$.
\end{remark}

%
%
\section{\bf Matrix--vector formulations}
\label{section:matrix-vector form}
The finite element nodal values of the unknown semi-discrete functions $v_h(\cdot,t)$, $\nu_h(\cdot,t)$ and $V_h(\cdot,t)$, $u_h(\cdot,t)$, and (if needed) $H_h(\cdot,t)$ are collected, respectively, into column vectors  $\bfv\t = (v_j\t) \in \R^{3N}$, $\bfn\t = (\n_j\t) \in \R^{3N}$, $\bfV\t = (V_j\t) \in \R^N$, $\bfu\t = (u_j\t) \in \R^N$, and $\bfH\t = (H_j\t) \in \R^N$. 
If it is clear from the context, the time dependencies will be often omitted.

\subsection{\bf Matrix vector evolution equations}

Recalling  the notation \blueon $A_h = \half (\nb_{\Ga_h[\bfx]} \nu_h + (\nb_{\Ga_h[\bfx]} \nu_h)^T)$, \blueoff $\pa_j K_h = \pa_j K(u_h,V_h)$, and $\pa_j F_h = \pa_j F(u_h,V_h)$ from the previous section,  
it is convenient to introduce the following non-linear maps:
\begin{align}
	\bff_1(\bfx,\bfn,\bfV,\bfu)|_{j+(\ell-1)N} 
	= &\ \int_{\Ga_h[\bfx]} \!\!\!\! |A_h|^2 \, (\nu_h)_\ell \, \phi_j[\bfx] 
	+ \int_{\Ga_h[\bfx]} \!\!\!\! \pa_1 K_h \, (\nb_{\Ga_h[\bfx]} u_h)_\ell \, \phi_j[\bfx] , \\
	\label{eq:rhs functions f_2}
	\bff_2(\bfx,\bfn,\bfV,\bfu;\dot\bfu)|_{j} 
	= &\ \int_{\Ga_h[\bfx]} \!\!\!\! |A_h|^2 V_h \, \phi_j[\bfx] 
	- \int_{\Ga_h[\bfx]} \!\!\!\! \pa_1 K_h \, \mat_h u_h \, \phi_j[\bfx] , 
\end{align}
for $j = 1, \dotsc, N$ and $\ell=1,2,3$. 
\blueon Since $\bff_2$ is linear in $\dot\bfu$, we highlight  this by the use of a semi-colon in the list of arguments. \blueoff

For convenience we introduce the following notation.  For $d \in \N$ (with the identity matrices $I_d \in \R^{d \times d}$), we define by the Kronecker products: 
$$
	\bfM^{[d]}(\bfx)= I_d \otimes \bfM(\bfx), \qquad
	\bfA^{[d]}(\bfx)= I_d \otimes \bfA(\bfx) .
$$
When no confusion can arise, we will write $\bfM(\bfx)$ for $\bfM^{[d]}(\bfx)$, $\bfM(\bfx,\bfu)$ for $\bfM^{[d]}(\bfx,\bfu)$, and $\bfA(\bfx)$ for $\bfA^{[d]}(\bfx)$. 
We will use both concepts for other matrices as well. 
Moreover, we use  $\bullet$ to denote the coordinate-wise multiplication for vectors $\bfy \in \mathbb R^{\dof}, \bfz \in \mathbb R^{3\dof}$
$$(\bfy \bullet \bfz)|_j = y_j z_j~\in \mathbb R^3  ~~\mbox{ for}~~j = 1, \dotsc, N . $$

\begin{problem}[Matrix--vector formulation of Problem~\ref{SemiDiscreteP1}]
\label{MVP1}
Using these definitions, and the transport property \eqref{eq:transport property of basis functions} for \eqref{eq:semi-discrete - P1 - diffusion eq}, the semi-discrete Problem~\ref{SemiDiscreteP1} can be written in the matrix--vector form:
\begin{subequations}
	\label{eq:matrix-vector form - P1 - pre}
	\begin{align}
	\label{eq:matrix-vector form - velocity law}
	\bfv =&\ \bfV \bullet \bfn , \\
	\bfM^{[3]}(\bfx,\bfu,\bfV) \dot{\bfn} + \bfA^{[3]}(\bfx) \bfn =&\ \bff_1(\bfx,\bfn,\bfV,\bfu) , \\
	\bfM(\bfx,\bfu,\bfV) \dot{\bfV} + \bfA(\bfx) \bfV =&\ \bff_2(\bfx,\bfn,\bfV,\bfu;\dot\bfu) , \\
	\diff \Big(\bfM(\bfx) \bfu \Big) + \bfA(\bfx,\bfu) \bfu =&\ \bfzero , \\
	\dot \bfx =&\ \bfv .
	\end{align} 
\end{subequations}
\end{problem}

\begin{problem}[Matrix--vector formulation of Problem~\ref{SemiDiscreteP2}]
\label{MVP2}
The semi-discrete Problem \ref{SemiDiscreteP2} can be written in the matrix--vector form (with non-linear matrix \blueon $\bfF$ and vectors $\bff_3, \bff_4$, defined according to \eqref{eq:semi-discrete problem - P2}): \blueoff
\begin{subequations}
	\label{eq:matrix-vector form - P2}
	\begin{align}
	\bfv =&\ \bfV \bullet \bfn , \\
	\bfM^{[3]}(\bfx,\bfu,\bfH) \dot{\bfn} + \bfA^{[3]}(\bfx) \bfn =&\ \bff_3(\bfx,\bfn,\bfH,\bfu) , \\
	\bfM(\bfx) \dot{\bfH} \blueon - \bfA(\bfx) \bfV =&\  \bff_4(\bfx,\bfn,\bfV) , \blueoff \\
	\bfM(\bfx) \bfV + \bfF(\bfx,\bfu,\bfH) =&\ \bfzero , \\
	\diff \Big(\bfM(\bfx) \bfu \Big) + \bfA(\bfx,\bfu) \bfu =&\ \bfzero , \\
	\dot \bfx =&\ \bfv .
	\end{align} 
\end{subequations}
\end{problem}
\begin{remark}
Upon noticing that the equations for $\bfn$ and $\bfV$ in Problem \ref{MVP1} are almost identical, we collect
\begin{equation*}
	\bfw := \begin{pmatrix} \bfn \\ \bfV \end{pmatrix} \in \R^{4N} .
\end{equation*}
Motivated by this abbreviation, we set $\bfM(\bfx,\bfu,\bfw) := \bfM(\bfx,\bfu,\bfV)$ (using these two notations interchangeably, if no confusion can arise), we then rewrite the system into
\begin{problem}[Equivalent matrix--vector formulation of Problem~\ref{SemiDiscreteP1}]

\begin{subequations}
	\label{eq:matrix-vector form - P1}
	\begin{align}
	\bfv =&\ \bfV \bullet \bfn , \\
	\bfM^{[4]}(\bfx,\bfu,\bfw) \dot{\bfw} + \bfA^{[4]}(\bfx) \bfw =&\ \bff(\bfx,\bfw,\bfu;\dot\bfu) , \\
	\diff \Big(\bfM(\bfx) \bfu \Big) + \bfA(\bfx,\bfu) \bfu =&\ \bfzero , \\
	\dot \bfx =&\ \bfv .
	\end{align} 
\end{subequations}
We remind that $\bff = (\bff_1,\bff_2)^T$ is linear in $\dot\bfu$.
\end{problem}
\end{remark}
\begin{remark}
We compare the above matrix--vector formulation \eqref{eq:matrix-vector form - P1} to the same formulas for \emph{forced} mean curvature flow \cite{MCF_soldriven}, with velocity law $v = -H\nu + g(u)\nu$, here $\bfw$ collects $\bfw = (\bfn,\bfH)^T$:
\begin{subequations}
\label{eq:MCF_forced matrix-vector form}
\begin{align}
	\bfv = &\ - \bfH \bullet \bfn , \\
	\bfM^{[4]}(\bfx) \dot{\bfw} + \bfA^{[4]}(\bfx) \bfw = &\ \bff(\bfx,\bfw,\bfu;\dot{\bfu}), \\
	\bfM(\bfx) \dot{\bfu} + \bfA(\bfx) \bfu = &\ \bfg(\bfx,\bfw,\bfu) , \\
	\dot \bfx = &\ \bfv ,
\end{align} 
\end{subequations}
and to \emph{generalised} mean curvature flow \cite[equation~(3.4)]{MCF_generalised}, with velocity law $v = -V(H) \nu$, here $\bfw$ collects $\bfw = (\bfn,\bfV)^T$:
\begin{subequations}
\label{eq:MCF_gen matrix-vector form}
\begin{align}
	\bfv =&\ \bfV \bullet \bfn , \\
	\bfM^{[4]}(\bfx,\bfw) \dot{\bfw} + \bfA^{[4]}(\bfx) \bfw &= \bff(\bfx,\bfw), \\
	\dot \bfx &= \bfv .
\end{align} 
\end{subequations}

The coupled system \eqref{eq:matrix-vector form - P1} has a similar structure to those of  \eqref{eq:MCF_forced matrix-vector form} and \eqref{eq:MCF_gen matrix-vector form}. 
Due to these similarities, in the stability proof we will use similar arguments to \cite{MCF_soldriven} and \cite{MCF_generalised} as wells as  those in \cite{MCF}.

\blueon Compared to previous works, the concentration dependency in the mass matrix $\bfM(\bfx,\bfu,\bfw)$ and in the stiffness matrix $\bfA(\bfx,\bfu)$ requires extra care in estimating the corresponding terms in the stability analysis. For which the results of Section~\ref{section:variable coefficient matrices} will play a key role. \blueoff 
\end{remark}

\subsection{\bf Defect and error equations}
\label{section:defects and error eqns}
We set $\bfu^*$ to be the nodal vector of the Ritz projection $\Rh^u u$ defined by \eqref{eq:definition quasi-linear Ritz map} on the interpolated surface $\Ga_h[\xs\t]$. 
The vectors   $\bfn^*\in \mathbb R^{3\dof}$ and $\bfV^*\in \mathbb R^\dof$ are the nodal vectors associated with  the Ritz projections $\Rh \nu$ and $\Rh V$ defined by \eqref{eq:definition Ritz map} of the normal and the normal velocity of the surface solving the PDE system. We set
\begin{equation*}
	\bfw^* := \begin{pmatrix} \bfn^* \\ \bfV^* \end{pmatrix} \in \R^{4N} .
\end{equation*}

It is convenient to introduce  the following  equations that define  {\it defect quantities}  $\dv,\dw,\du$ which occur when surface finite element interpolations and Ritz projections  of the exact solution (i.e.~$\xs$, $\vs$ and $\ws$, $\us$) are substituted into the matrix--vector equations defining the numerical approximations \eqref{eq:matrix-vector form - P1}. 
\begin{definition}[Defect equations] The {\it defects} $\dv,\dw,\du$ are defined by the following coupled system:
\begin{subequations}
	\label{eq:defect definition}
	\begin{align}
		\vs =&\ \bfV^* \bullet \bfn^* + \dv , \\
		\bfM(\xs,\us,\ws) \dotws + \bfA(\xs) \ws = &\ \bff(\xs,\ws,\us;\dotus) + \bfM(\xs)\dw, \\
		\diff \Big( \bfM(\xs) \us \Big) + \bfA(\xs,\us) \us = &\ \bfM(\xs)\du, \\
		\dotxs = &\ \vs .
	\end{align}
\end{subequations}
\end{definition}

The following {\it error equations} for the 
nodal values of the errors between the  exact and numerical solutions  are obtained by subtracting \eqref{eq:defect definition} from \eqref{eq:matrix-vector form - P1} where the errors are set to be  
\begin{equation*}
	\ex = \bfx - \xs, \quad \ev = \bfv - \vs, \quad \ew = \bfw - \ws, \quad \mbox{and} \quad \eu = \bfu - \us,
\end{equation*}
with corresponding finite element functions, respectively, $$e_x , \quad e_v, \quad e_w, \quad \mbox{and} \quad e_u.$$

\begin{definition}[Error equations]
The error equations are defined by the following system:
\begin{subequations}
	\label{eq:error equations}
	\begin{align}
	\label{eq:error equations - v}
	\ev = &\ \big(\bfV \bullet \bfn - \bfV^* \bullet \bfn^* \big) - \dv , \\
	\label{eq:error equations - w}
	\bfM(\bfx,\bfu,\bfw) \dotew + \bfA(\bfx) \ew 
	= &\ - \big(\bfM(\bfx,\bfu,\bfw) - \bfM(\bfx,\us,\ws)\big) \dotws \nonumber \\ 
	&\ - \big(\bfM(\bfx,\us,\ws) - \bfM(\xs,\us,\ws)\big) \dotws \nonumber \\
	&\ - \big(\bfA(\bfx)-\bfA(\xs)\big) \ws \nonumber \\
	&\ + \big(\bff(\bfx,\bfw,\bfu;\dot\bfu) - \bff(\xs,\ws,\us;\dotus) \big) \nonumber \\ 
	&\ - \bfM(\xs) \dw , \\
	\label{eq:error equations - u}
	\diff \Big( \bfM(\bfx) \eu \Big) +  \bfA(\bfx,\us) \eu 
	= &\ - \diff \Big( \big(\bfM(\bfx) - \bfM(\xs) \big) \us \Big) \nonumber \\
	&\ - \big(\bfA(\bfx,\bfu) - \bfA(\bfx,\us) \big) \eu \nonumber \\
	&\ - \big(\bfA(\bfx,\bfu) - \bfA(\bfx,\us)\big) \us \nonumber \\ 
	&\ - \big(\bfA(\bfx,\us) - \bfA(\xs,\us)\big) \us \nonumber \\ 
	&\ - \bfM(\xs) \du , \\
	\label{eq:error equations - x}
	\dotex = &\ \ev .
	\end{align}
\end{subequations}
\end{definition}
Note that by definition the initial data $\ex(0) = \bfzero$ and $\ev(0) = \bfzero$ whereas $\eu(0) \neq \bfzero$ and $\ew(0) \neq \bfzero$ in general.

\section{\bf Consistency, stability, and convergence}
\label{section:stability analysis}
In this section we prove the main results of this paper. We begin in Section~\ref{boundednessRitz} by noting the  uniform boundedness of some coefficients as a consequence of the approximation properties of the Ritz projections. In Section~\ref{section:consistency} we address the consistency of the finite element approximation by bounding the $L^2$ norms of the defects. 
\subsection{\bf Uniform bounds}
\subsubsection{Boundedness of the Ritz projections}\label{boundednessRitz}
We start by proving $h$- and $t$-uniform $W^{1,\infty}(\Ga_h[\xs\t])$ norm bounds for the finite element projections of the exact solutions (see Section~\ref{section:defects and error eqns}).
\begin{lemma}
	The finite element interpolations $x_h^*$ and  $v_h^*$ and the Ritz maps $w_h^*$, and $u_h^*$ of the exact solutions satisfy
	\begin{equation}
	\label{eq:W^{1,infty} bounds for exact solutions}
		\begin{alignedat}{3}
			&\ \|x_h^*\|_{W^{1,\infty}(\Ga_h[\xs\t])} + \|v_h^*\|_{W^{1,\infty}(\Ga_h[\xs\t])} && \\
			&\ \qquad + \|w_h^*\|_{W^{1,\infty}(\Ga_h[\xs\t])} + \|u_h^*\|_{W^{1,\infty}(\Ga_h[\xs\t])} \leq C & \for & 0 \leq t \leq T,\\
		\end{alignedat}
	\end{equation}
	uniformly in $h$.
\end{lemma}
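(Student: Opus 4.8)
The plan is to split each of the four finite element functions into the inverse lift of the corresponding smooth exact quantity plus an approximation defect, and to bound the two contributions separately: the exact part by the assumed $t$-uniform regularity of the solution of Problem~\ref{P1} on $[0,T]$, and the defect by standard $W^{1,\infty}$ interpolation and Ritz-projection estimates on the quasi-uniform, shape-regular family $\Th$. Since $\{X,\nu,V,u\}$ is assumed sufficiently smooth, the quantities $\mathrm{id}_{\Ga[X]}$, $v = \partial_t X$, $\nu$, $V$ and $u$ are bounded in $W^{1,\infty}(\Ga[X(\cdot,t)])$ uniformly for $t \in [0,T]$, and the same holds for their inverse lifts to $\Ga_h[\xs\t]$, because the interpolated surface is $h$-uniformly close to $\Ga[X(\cdot,t)]$ and the (inverse) lift distorts $W^{1,\infty}$ norms only by $h$-independent factors.

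For $x_h^*$ and $v_h^*$ I would use that, by Definition~\ref{def:interpolated surface}, these are the finite element interpolations of $X(\cdot,t)$ and of $v(\cdot,t)$, i.e.\ the elements of $S_h[\xs\t]^3$ with nodal values $X(p_j,t)$ and $v(p_j,t)$. The evolving-surface interpolation estimates of \cite{Demlow2009,highorderESFEM,EllRan21} then give, with a constant depending only on the $C^{k+1}$ norms of $X$ and $v$ over $[0,T]$,
\begin{equation*}
	\| x_h^* - (\mathrm{id}_{\Ga[X]})^{-\ell} \|_{W^{1,\infty}(\Ga_h[\xs\t])} + \| v_h^* - v^{-\ell} \|_{W^{1,\infty}(\Ga_h[\xs\t])} \longrightarrow 0 \quad \text{as } h \to 0 ,
\end{equation*}
so these defects are bounded, and the triangle inequality with the previous paragraph yields the claimed bounds for $x_h^*$ and $v_h^*$. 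For the generalised Ritz maps $w_h^* = (\nu_h^*,V_h^*)^T$, with $\nu_h^* = \Rh \nu$ and $V_h^* = \Rh V$ defined by \eqref{eq:definition Ritz map}, I would combine the $L^2$- and $H^1$-norm Ritz error bounds of \cite[Section~6]{highorderESFEM} with an inverse inequality on the quasi-uniform mesh (or invoke the maximum-norm Ritz estimates there directly) to obtain that $\| \nu_h^* - \nu^{-\ell} \|_{W^{1,\infty}(\Ga_h[\xs\t])}$ and $\| V_h^* - V^{-\ell} \|_{W^{1,\infty}(\Ga_h[\xs\t])}$ tend to $0$ as $h \to 0$ for finite element degree $k \ge 2$; adding the smooth part again controls $w_h^*$. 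For $u_h^* = \Rh^u u$ the argument is structurally identical but uses the quasi-linear Ritz map \eqref{eq:definition quasi-linear Ritz map}: here I would invoke the $W^{1,\infty}$ stability and error estimates for this Ritz map from \cite{KPower_quasilinear}, which apply because Assumption~\ref{eq:assumptions D} makes $\D(u^{-\ell})$ a uniformly elliptic, locally Lipschitz coefficient and $u$ is smooth, and conclude once more by the triangle inequality. Summing the four pieces gives \eqref{eq:W^{1,infty} bounds for exact solutions}, uniformly in $h$, and uniformly in $t \in [0,T]$ because every regularity constant entering the argument is $t$-uniform on $[0,T]$.

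The step I expect to be the main obstacle is the $W^{1,\infty}$ bound for the quasi-linear Ritz map $u_h^* = \Rh^u u$: since its defining bilinear form carries the variable coefficient $\D(u^{-\ell})$, the classical linear maximum-norm Ritz estimate does not apply off the shelf, and one must either cite the quasi-linear analogue from \cite{KPower_quasilinear} or adapt it, taking care of three points: that the problem is posed on the interpolated surface $\Ga_h[\xs\t]$ rather than on an exactly triangulated surface; that the $u$-dependence of the coefficient is absorbed using the growth and Lipschitz bounds of Assumption~\ref{eq:assumptions D}; and that all constants remain bounded uniformly for $t \in [0,T]$. The remaining ingredients --- $W^{1,\infty}$ interpolation error bounds, inverse inequalities on quasi-uniform meshes, and the $t$-uniform smoothness of the exact solution --- are routine, and the argument parallels the corresponding step in \cite{MCF}.
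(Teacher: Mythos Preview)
Your overall strategy---split each quantity into a smooth part plus an approximation defect, bound the defect by Ritz/interpolation estimates, and add back the bounded exact part---is the same as the paper's. The interpolation bounds for $x_h^*$ and $v_h^*$ are handled identically.

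There is, however, a technical gap in the Ritz-map part. You propose to bound $\|\Rh w - w^{-\ell}\|_{W^{1,\infty}(\Ga_h[\xs])}$ by combining the $H^1$ Ritz error estimate with an inverse inequality. This does not work as written: inverse inequalities of the form $\|\cdot\|_{W^{1,\infty}} \le c h^{-d/2}\|\cdot\|_{H^1}$ hold only for finite element functions, and $w^{-\ell}$ is not one. Your parenthetical alternative---citing direct $W^{1,\infty}$ Ritz estimates---would close the gap, but neither \cite{highorderESFEM} nor \cite{KPower_quasilinear} provides such estimates off the shelf for the evolving-surface / quasi-linear setting.

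The paper's fix is the standard one: insert the \emph{finite element interpolant} $\Ih w$ rather than the inverse lift $w^{-\ell}$. One writes
\[
\|\Rh w\|_{W^{1,\infty}(\Ga_h^*)} \le \|\Rh w - \Ih w\|_{W^{1,\infty}(\Ga_h^*)} + c\,\|I_h w\|_{W^{1,\infty}(\Ga)} ,
\]
applies the inverse inequality to the first term (now a difference of two finite element functions), and then bounds $\|\Rh w - \Ih w\|_{H^1}$ by the sum of the $H^1$ Ritz error and the $H^1$ interpolation error, giving $O(h^{k-d/2})$. The second term is controlled by the $W^{1,\infty}$ interpolation estimate \cite[Proposition~2.7]{Demlow2009}. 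The same argument applies verbatim to the quasi-linear Ritz map $\Rh^u u$, using only the $H^1$ error bound of \cite{KPower_quasilinear}; no $W^{1,\infty}$ quasi-linear Ritz estimate is needed. With this correction your proof goes through.
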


\begin{proof}
	The $W^{1,\infty}$ bounds for the interpolations, $\Ih X = x_h^*$ and $\Ih v = v_h^*$, follow from the error estimates in \cite[Section~2.5]{Demlow2009}.
	
	On the other hand, the $W^{1,\infty}$ bounds on the Ritz maps ($\Rh w = w_h^*$ and $\Rh^u u = u_h^*$) are obtain, using an inverse estimate \cite[Theorem~4.5.11]{BrennerScott}, above interpolation error estimates of \cite{Demlow2009}, and the Ritz map error bounds \cite{highorderESFEM} and \cite{KPower_quasilinear}, by 
	\begin{equation}
	\label{eq:Ritz map W^1,infty bound}
		\begin{aligned}
			\|\Rh u\|_{W^{1,\infty}(\Ga_h^*)} \! 
			\leq &\ \|\Rh u - \Ih u\|_{W^{1,\infty}(\Ga_h^*)} + c \|I_h u\|_{W^{1,\infty}(\Ga)} \\
			\leq &\ c h^{-d/2} \|\Rh u - \Ih u\|_{H^1(\Ga_h^*))} + \|I_h u\|_{W^{1,\infty}(\Ga)} \\
			\leq &\ c h^{-d/2} \big( \|R_h u - u\|_{H^1(\Ga)} + \|u - I_h u\|_{H^1(\Ga)} \big) \\
			&\ + \|I_h u - u\|_{W^{1,\infty}(\Ga)} \! + \|u\|_{W^{1,\infty}(\Ga)} \\
			\leq &\ c h^{k-d/2} \|u\|_{H^{k+1}(\Ga)} 
			+ (c h + 1) \|u\|_{W^{2,\infty}(\Ga)} , 
		\end{aligned}
	\end{equation}
	with $k - d/2 \geq 0$, in dimension $d = 3$ here. Where for the last term we used the (sub-optimal) interpolation error estimate of \cite[Proposition~2.7]{Demlow2009} (with $p=\infty$).
	\qed 
\end{proof}

\subsubsection{A priori boundedness of numerical solution}
\label{subsec:A priori boundedness}

We note here that, by Assumption~\ref{eq:assumptions on pa_2 K}, along the exact solutions $u$, $V$ in the bounded time interval $[0,T]$ the factor $\pa_2 K(u,V)$ is uniformly bounded from  above and below by constants $K_1 \geq K_0 > 0$.

For the estimates of the non-linear terms we establish some $W^{1,\infty}$ norm bounds.
\begin{lemma}
\label{lemma:assumed bounds}
Let $\kappa > 1$.  There exists a maximal $T^* \in (0,T]$ such that the following inequalities hold: 
\begin{equation}
\label{eq:assumed bounds}
	\begin{aligned}
		\|e_x(\cdot,t)\|_{W^{1,\infty}(\Ga_h[\xs\t])} \leq &\ h^{(\kappa - 1)/2} , \\
		\|e_v(\cdot,t)\|_{W^{1,\infty}(\Ga_h[\xs\t])} \leq &\ h^{(\kappa - 1)/2} , \\ 
		\|e_w(\cdot,t)\|_{W^{1,\infty}(\Ga_h[\xs\t])} \leq &\ h^{(\kappa - 1)/2} , \\ 
		\|e_u(\cdot,t)\|_{W^{1,\infty}(\Ga_h[\xs\t])} \leq &\ h^{(\kappa - 1)/2} ,
	\end{aligned}
	\qquad \text{for} \quad t \in [0,T^*] .
\end{equation}
Then for $h$ sufficiently small and for $0 \leq t \leq T^*$, 
\begin{equation}
\label{eq:W^{1,infty} bounds for numerical solutions}
	x_h,v_h,w_h,u_h~~~\mbox{are uniformly bounded in}~~ W^{1,\infty}(\Ga_h[\xs\t]) .
\end{equation}

Furthermore, the functions $\pa_2 K_h^*=\pa_2 K(u_h^*,V_h^*)$ and $\pa_2 K_h = \pa_2 K(u_h,V_h)$ satisfy the following bounds
\begin{align}
	\label{eq:uniform bounds for pa_2 Kh*}
	0 \, < \, \tfrac23 K_0 \, \leq \, \|\pa_2 K_h^*\|_{L^\infty(\Ga_h[\xs(t)])} \, \leq \, \tfrac32 K_1 \qquad \text{$h$-uniformly for $0 \leq t \leq T$,} \\
	\label{eq:uniform bounds for pa_2 Kh}
	0 \, < \, \tfrac12 K_0 \, \leq \, \|\pa_2 K_h\|_{L^\infty(\Ga_h[\xs(t)])} \, \leq \, 2 K_1 \qquad \text{$h$-uniformly for $0 \leq t \leq T^*$.}
\end{align}

Then these $h$- and time-uniform upper and lower bounds imply that the norms $\|\cdot\|_{\bfM(\bfx)}$ and $\|\cdot\|_{\bfM(\bfx,\bfu,\bfw)}$ are indeed $h$- and $t$-uniformly equivalent, for any $\bfz \in \R^N$:
\begin{equation}
\label{eq:Mxuw norm equivalence}
	\tfrac12 K_0 \, \|\bfz\|_{\bfM(\bfx)}^2 \leq \|\bfz\|_{\bfM(\bfx,\bfu,\bfw)}^2 \leq 2 K_1 \, \|\bfz\|_{\bfM(\bfx)}^2 .
\end{equation} 
\end{lemma}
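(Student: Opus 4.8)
The plan is to derive all four conclusions from the triangle inequality, the $W^{1,\infty}$ bounds \eqref{eq:W^{1,infty} bounds for exact solutions} on the interpolation and Ritz maps of the exact solution, the ansatz \eqref{eq:assumed bounds}, the local Lipschitz continuity of $\pa_2 K$ (Assumptions~\ref{eq:assumptions on pa_1 K} and~\ref{eq:assumptions on pa_2 K}), and the two-sided bound $K_0 \le \pa_2 K(u,V) \le K_1$ along the exact solution on the compact interval $[0,T]$. \emph{Step 1 (the $W^{1,\infty}$ bound \eqref{eq:W^{1,infty} bounds for numerical solutions}).} On $\Ga_h[\xs\t]$ one has $x_h = x_h^* + e_x$, and likewise $v_h = v_h^* + e_v$, $w_h = w_h^* + e_w$, $u_h = u_h^* + e_u$; hence by \eqref{eq:W^{1,infty} bounds for exact solutions} and \eqref{eq:assumed bounds}, $\|x_h\|_{W^{1,\infty}(\Ga_h[\xs\t])} \le C + h^{(\kappa-1)/2}$, and similarly for the other three. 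Since $\kappa > 1$, for $h$ small the right-hand sides are bounded by $C+1$.

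\emph{Step 2 (the bound \eqref{eq:uniform bounds for pa_2 Kh*} for $\pa_2 K_h^*$).} The Ritz-map error estimates \cite{highorderESFEM,KPower_quasilinear}, combined with an inverse inequality and the interpolation estimates \cite{Demlow2009} exactly as in \eqref{eq:Ritz map W^1,infty bound} (which uses $k \ge 2$ in dimension $d=3$, so $h^{k-d/2}\to0$), yield $\|u_h^* - u\|_{L^\infty} \to 0$ and $\|V_h^* - V\|_{L^\infty} \to 0$, compared via the lift. Local Lipschitz continuity of $\pa_2 K$ then gives $\|\pa_2 K(u_h^*,V_h^*) - \pa_2 K(u,V)\|_{L^\infty} \to 0$, and choosing $h$ so small that this quantity is at most $\min\{\tfrac13 K_0,\tfrac12 K_1\}$ gives \eqref{eq:uniform bounds for pa_2 Kh*}, valid on all of $[0,T]$ since only the exact solution enters. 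For \eqref{eq:uniform bounds for pa_2 Kh} I would run the same argument one level up: on $[0,T^*]$, \eqref{eq:assumed bounds} gives $\|u_h - u_h^*\|_{L^\infty} + \|V_h - V_h^*\|_{L^\infty} \le \|e_u\|_{W^{1,\infty}(\Ga_h[\xs\t])} + \|e_w\|_{W^{1,\infty}(\Ga_h[\xs\t])} \le 2 h^{(\kappa-1)/2}$ (the $V$-component of $e_w$ dominating $V_h - V_h^*$), so local Lipschitz continuity of $\pa_2 K$ and Step~2 give $\|\pa_2 K_h - \pa_2 K_h^*\|_{L^\infty} \le \min\{\tfrac16 K_0,\tfrac12 K_1\}$ for $h$ small, whence $\tfrac12 K_0 \le \pa_2 K_h \le 2 K_1$ on $[0,T^*]$.

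\emph{Step 3 (the norm equivalence \eqref{eq:Mxuw norm equivalence}).} The pointwise bound $\tfrac12 K_0 \le \pa_2 K(u_h,V_h) \le 2 K_1$ also holds on $\Ga_h[\bfx\t]$: the surfaces $\Ga_h[\bfx\t]$ and $\Ga_h[\xs\t]$ are $W^{1,\infty}$-close by \eqref{eq:assumed bounds}, and $u_h,V_h$ carry the same nodal values on both, so Step~2 together with $\|u_h-u_h^*\|_{L^\infty}, \|V_h-V_h^*\|_{L^\infty} \le 2h^{(\kappa-1)/2}$ gives the same two-sided bound there. Integrating $\tfrac12 K_0 |z_h|^2 \le \pa_2 K(u_h,V_h)|z_h|^2 \le 2 K_1 |z_h|^2$ over $\Ga_h[\bfx\t]$ and recalling $\|\bfz\|_{\bfM(\bfx,\bfu,\bfw)}^2 = \int_{\Ga_h[\bfx]} \pa_2 K(u_h,V_h)|z_h|^2$ and $\|\bfz\|_{\bfM(\bfx)}^2 = \int_{\Ga_h[\bfx]} |z_h|^2$ yields \eqref{eq:Mxuw norm equivalence}.

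The main (if minor) obstacle is the $L^\infty$ convergence of the Ritz projections of the exact solution to $u$ and $V$: this is not immediate from the $H^1$ Ritz estimates and requires the inverse-inequality argument of \eqref{eq:Ritz map W^1,infty bound} together with the hypothesis $k \ge 2$; related care is needed to transfer $L^\infty$ bounds between $\Ga[X]$, $\Ga_h[\xs\t]$ and $\Ga_h[\bfx\t]$ through the lift, using the smallness of $e_x$. Everything else is elementary manipulation with the triangle inequality and the prescribed smallness thresholds for $h$.
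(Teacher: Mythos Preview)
Your argument for the three conclusions \eqref{eq:W^{1,infty} bounds for numerical solutions}--\eqref{eq:Mxuw norm equivalence} is correct and matches the paper's approach essentially line for line: triangle inequality for the numerical-solution bounds, $L^\infty$ Ritz-map error via the inverse-inequality trick of \eqref{eq:Ritz map W^1,infty bound} combined with local Lipschitz continuity of $\pa_2 K$ for \eqref{eq:uniform bounds for pa_2 Kh*}, then the same comparison one level up using \eqref{eq:assumed bounds} for \eqref{eq:uniform bounds for pa_2 Kh}, and finally pointwise integration for the norm equivalence.

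However, you have omitted one part of the statement: the assertion that such a maximal $T^* \in (0,T]$ \emph{exists}, i.e.\ that $T^* > 0$. This is not automatic, since $e_w(\cdot,0)$ and $e_u(\cdot,0)$ are in general nonzero (they are differences between interpolation and Ritz map). The paper fills this by checking the initial time: $e_x(\cdot,0) = e_v(\cdot,0) = 0$ trivially, while for $e_w$ and $e_u$ an inverse inequality together with the $H^1$ Ritz-map error bounds gives
\[
\|e_w(\cdot,0)\|_{W^{1,\infty}(\Ga_h[\xs(0)])} \le c h^{-1} \|e_w(\cdot,0)\|_{H^1(\Ga_h[\xs(0)])} \le c h^{\kappa-1},
\]
and likewise for $e_u$. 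Since $\kappa > 1$, for $h$ small this is strictly below $h^{(\kappa-1)/2}$, so by continuity the bounds \eqref{eq:assumed bounds} hold on a nontrivial interval. You should add this step; without it the lemma's first claim is unproved.
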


\begin{proof}
(a) Since we have assumed $\kappa>1$ we obtain that $T^*$ exists and is indeed positive. This is a consequence of the initial errors $e_x(\cdot,0) = 0$, $e_v(\cdot,0) = 0$, and, by an inverse inequality \cite[Theorem~4.5.11]{BrennerScott}, 
\begin{align*}
	\|e_w(\cdot,0)\|_{W^{1,\infty}(\Ga_h[\xs(0)])} \leq c h^{-1} \|e_w(\cdot,0)\|_{H^1(\Ga_h[\xs(0)])} \leq c h^{\kappa - 1} , \\
	\|e_u(\cdot,0)\|_{W^{1,\infty}(\Ga_h[\xs(0)])} \leq c h^{-1} \|e_u(\cdot,0)\|_{H^1(\Ga_h[\xs(0)])} \leq c h^{\kappa - 1} ,
\end{align*}
and for the last inequalities using the error estimates for the Ritz maps $R_h w$ and $R_h^u u$, \cite[Theorem~6.3 and 6.4]{highorderESFEM} and the generalisations of \cite[Theorem~3.1 and 3.2]{KPower_quasilinear}, respectively. 

The uniform bounds on numerical solutions over $[0,T^*]$ \eqref{eq:W^{1,infty} bounds for numerical solutions} is directly seen using \eqref{eq:assumed bounds}, \eqref{eq:W^{1,infty} bounds for exact solutions}, and a triangle inequality.

(b) We now show the $h$- and $t$-uniform upper- and lower-bounds for the coefficient functions $\pa_2 K_h^*=\pa_2 K(u_h^*,V_h^*)$ and $\pa_2 K_h = \pa_2 K(u_h,V_h)$. We use a few ideas from \cite{MCF_generalised}, where similar estimates were shown.

As a first step, it follows from applying inverse inequalities (see, e.g., \cite[Theorem~4.5.11]{BrennerScott}) on the finite element spaces and $H^1$ norm error bounds on the  Ritz maps $R_h$ and $R_h^u$  and $H^1$ and $L^\infty$ error bounds for interpolants  (e.g.~\cite{highorderESFEM}, \cite{EllRan21}, \cite{Demlow2009} and \cite{KPower_quasilinear}) that the following $L^\infty$ norm error bounds hold in dimension $d = 2$ (but stated for a general $d$ for future reference):
\begin{equation}
\label{eq:Linfty error of Ritz maps}
	\begin{aligned}
		\|(V_h^*)^\ell - V\|_{L^\infty(\Ga[X])} 
		\leq &\ c h^{2-d/2} \|V\|_{H^{2}(\Ga[X])} + c h^2 \|V\|_{W^{2,\infty}(\Ga[X])} , \\
		\text{and } \qquad \|(u_h^*)^\ell - u\|_{L^\infty(\Ga[X])} \leq &\ c h^{2-d/2} \|u\|_{H^{2}(\Ga[X])} + c h^2 \|u\|_{W^{2,\infty}(\Ga[X])} .
	\end{aligned}
\end{equation}

By the definition of the lift map we have the equality $\eta_h^*(x,t) = (\eta_h^*)^\ell(x^\ell,t)$ for any function $\eta_h^*\colon\Ga_h[\xs] \to \R$, and then by the triangle and reversed triangle inequalities and using the local Lipschitz continuity of $\pa_2 K$ in both variables and its uniform upper and lower bounds, in combination with \eqref{eq:W^{1,infty} bounds for exact solutions}, we obtain (with the abbreviations $\pa_2 K = \pa_2 K(u,V)$ and $\pa_2 K_h^* = \pa_2 K(u_h^*,V_h^*)$), written here for a $d$ dimensional surface (we will use $d=2$),
\begin{equation*}
	\begin{aligned}
		|\pa_2 K_h^*| \leq &\ |\pa_2 K| + |\pa_2 K - (\pa_2 K_h^*)^\ell| \\
		\leq &\ |\pa_2 K| + \|\pa_2 K - (\pa_2 K_h^*)^\ell\|_{L^\infty(\Ga[X(\cdot,t)])} \\
		\leq &\ K_1 + c \|V - (V_h^*)^\ell\|_{L^\infty(\Ga[X(\cdot,t)])} + c \|u - (u_h^*)^\ell\|_{L^\infty(\Ga[X(\cdot,t)])}  \\
		\leq &\ K_1 + c h^{2-d/2} , 
	\end{aligned}
\end{equation*}
and
\begin{equation*}
	\begin{aligned}
		|\pa_2 K_h^*| \geq &\ |\pa_2 K| - |\pa_2 K - (\pa_2 K_h^*)^\ell| \\
		\geq &\ |\pa_2 K| - \|\pa_2 K - (\pa_2 K_h^*)^\ell\|_{L^\infty(\Ga[X(\cdot,t)])} \\
		\geq &\ K_0 - c \|V - (V_h^*)^\ell\|_{L^\infty(\Ga[X(\cdot,t)])} - c \|u - (u_h^*)^\ell\|_{L^\infty(\Ga[X(\cdot,t)])} \\
		\geq &\ K_0 - c h^{2-d/2} ,
	\end{aligned} 
\end{equation*}
which proves \eqref{eq:uniform bounds for pa_2 Kh*} on $[0,T]$, \emph{independently} of \eqref{eq:assumed bounds}.

A similar argument comparing $\pa_2 K_h$ with $\pa_2 K_h^*$ now, using \eqref{eq:assumed bounds} (which only hold for $0 \leq t \leq T^*$) instead of \eqref{eq:Linfty error of Ritz maps}, together with \eqref{eq:uniform bounds for pa_2 Kh*}, yields the bounds \eqref{eq:uniform bounds for pa_2 Kh}.

In view of \eqref{eq:uniform bounds for pa_2 Kh} the norm equivalence \eqref{eq:Mxuw norm equivalence} is straightforward.
\qed 
\end{proof}




\subsection{\bf Consistency and stability}
\subsubsection{\bf Consistency}
\label{section:consistency}
\blueon For evolving surface finite elements of polynomial degree $k$, \blueoff the defects satisfy the following consistency bounds:
\begin{proposition}
\label{proposition:consistency}
For $t\in [0,T]$, it holds that 
\begin{align}
	\blueon \|d_v(\cdot,t)\|_{H^1(\Ga_h[\xs(t)])} = \|\dv\t\|_{\bfK(\xs\t)} \blueoff \leq &\ c h^k , \\
	\|d_w(\cdot,t)\|_{L^2(\Ga_h[\xs(t)])} = \|\dw\t\|_{\bfM(\xs\t)} \leq &\ c h^k , \\
	\|d_u(\cdot,t)\|_{L^2(\Ga_h[\xs(t)])} = \|\du\t\|_{\bfM(\xs\t)} \leq &\  c h^k .
\end{align}
\end{proposition}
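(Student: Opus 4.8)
The plan is to test each of the three defect equations in \eqref{eq:defect definition} against an arbitrary finite element function on $\Ga_h[\xs]$, lift the resulting identity to the exact surface $\Ga[X]$, subtract the corresponding weak equation of \eqref{eq:weak formulation - P1} (with test function $\vphi_h^\ell$), and exploit the defining relations of the Ritz maps to cancel the principal (elliptic) parts; the remaining terms are then estimated by the error bounds for the Ritz maps \eqref{eq:definition Ritz map}, \eqref{eq:definition quasi-linear Ritz map} and for the Lagrange interpolation (from \cite{highorderESFEM,KPower_quasilinear,Demlow2009}), the geometric estimates of Section~\ref{section:relating surfaces} relating integrals over $\Ga_h[\xs]$ and $\Ga[X]$, and the uniform $W^{1,\infty}$ bounds \eqref{eq:W^{1,infty} bounds for exact solutions}. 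The $L^2$ norm of a defect is obtained, as usual, by duality: $\|d_\bullet\|_{L^2(\Ga_h[\xs])} = \sup_{\vphi_h} \int_{\Ga_h[\xs]} d_\bullet \, \vphi_h \, / \, \|\vphi_h\|_{L^2(\Ga_h[\xs])}$.

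For $d_v$ the argument is elementary: writing the velocity-law defect equation as a finite element identity, $d_v$ corresponds to $\Ih(V\nu) - \Ih(V_h^\ast \nu_h^\ast)$ (interpreting the lifts appropriately), so that after lifting one has $V\nu - V_h^\ast \nu_h^\ast = (V - (V_h^\ast)^\ell)\,\nu + (V_h^\ast)^\ell\,(\nu - (\nu_h^\ast)^\ell)$, and one bounds the $L^2$ norm by the interpolation error of the smooth function $V\nu$ together with the $L^2$ errors of the Ritz maps $\nu_h^\ast, V_h^\ast$ and the $L^\infty$ bounds \eqref{eq:W^{1,infty} bounds for exact solutions}; this in fact yields the sharper bound $ch^{k+1}$, which certainly implies $ch^k$. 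For $d_u$, one tests the diffusion defect equation against a nodal basis function, uses the Leibniz/transport formula \cite{DziukElliott_acta} to expand $\diff(\bfM(\xs)\us)$, and compares with \eqref{eq:weak formulation - P1 - diffusion eq}: the quasi-linear Ritz map $\Rh^u$ in \eqref{eq:definition quasi-linear Ritz map} is tailored precisely so that the $\D(\cdot)$--weighted stiffness terms cancel up to a geometric perturbation of order $h^{k+1}$; what remains are (i) the commutator between $\mat_h$ and the time-dependent Ritz projection (controlled as in \cite[Section~6]{highorderESFEM} and \cite{KPower_quasilinear}), (ii) the mismatch of the coefficients $\D(\Rh^u u)$ and $\D(u^{-\ell})$, handled via the local Lipschitz continuity of $\D$ and the uniform bounds $0 < D_0 \le \D \le D_1$ of Assumption~\ref{eq:assumptions D}, and (iii) geometric and interpolation errors; all are $O(h^k)$ (several in fact $O(h^{k+1})$).

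The bulk of the work, and the main obstacle, is the defect $d_w$ arising from the coupled $(\nu,V)$ evolution equations, because of the solution-dependent coefficient $\pa_2 K(u_h^\ast,V_h^\ast)$ multiplying the material derivative and the curvature term $|A_h^\ast|^2\nu_h^\ast$. Testing with $\vphi_h$ and lifting, the stiffness term is absorbed by the defining relation of the Ritz map $\Rh$, leaving three contributions to estimate: first, $\pa_2 K(u_h^\ast,V_h^\ast)\,\mat_h \nu_h^\ast$ is compared with $\pa_2 K(u,V)\,\mat\nu$ by splitting off the coefficient error (bounded in $L^2$ using the local Lipschitz continuity of $\pa_2 K$ from Assumption~\ref{eq:assumptions on pa_1 K} and the $L^2$ Ritz errors of $u$ and $V$, against the $L^\infty$-bounded factor $\mat_h\nu_h^\ast$, which is $W^{1,\infty}$-bounded since $\mat_h\Rh\nu$ is close to the $W^{1,\infty}$-bounded $\Rh\mat\nu$) and the material-derivative commutator $\mat_h\Rh\nu - \Rh\mat\nu$ (estimated as in \cite{MCF,highorderESFEM}); second, $|A_h^\ast|^2\nu_h^\ast - |A|^2\nu$, where $|A_h^\ast|^2 - |A|^2 = (A_h^\ast + (A)^{-\ell}) : (A_h^\ast - (A)^{-\ell})$ and $\|A_h^\ast - (A)^{-\ell}\|_{L^2} = \|\nb_{\Ga_h[\xs]}\Rh\nu - (\nb_{\Ga[X]}\nu)^{-\ell}\|_{L^2} = O(h^k)$ by the $H^1$-error bound for the Ritz map --- this is the term that fixes the order to $h^k$ and (via the subsequent inverse-inequality step in the stability argument) forces $k\ge 2$; third, $\pa_1 K(u_h^\ast,V_h^\ast)\,\nb_{\Ga_h[\xs]} u_h^\ast - \pa_1 K(u,V)\,\nb_{\Ga[X]} u$ is treated analogously, again of order $h^k$. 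Collecting all contributions and using the norm equivalence \eqref{eq:Mxuw norm equivalence} together with the geometric estimates gives $\|d_w\|_{\bfM(\xs)} \le ch^k$. Throughout, the estimates are close variants of the consistency analyses in \cite{MCF,MCF_soldriven,MCF_generalised}, the genuinely new feature being the two solution-dependent coefficients $\pa_2 K(u_h^\ast,V_h^\ast)$ and $\D(u_h^\ast)$.
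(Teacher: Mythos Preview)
Your proposal is correct and follows essentially the same approach as the paper: the paper's own proof is a terse reference to \cite[Lemma~8.1]{MCF}, \cite[Lemma~8.1]{MCF_generalised}, and \cite[Theorem~5.1]{KPower_quasilinear} for the three defects respectively, and you have simply unpacked what those references do (lift to the exact surface, cancel the principal part via the Ritz map, and bound the remainders by the $H^1$ Ritz-map and interpolation errors together with geometric perturbation estimates). Your identification of the $|A_h^\ast|^2-|A|^2$ term as the one that limits the order to $h^k$ is exactly the mechanism at work in \cite{MCF}; one small caveat is that your claim of $O(h^{k+1})$ for $d_v$ is slightly optimistic, since $\dv$ involves nodal values of the Ritz maps $V_h^\ast,\nu_h^\ast$ rather than interpolants, but this is immaterial for the stated $O(h^k)$ bound.
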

\begin{proof}
The consistency analysis is heavily relying on \cite{KPower_quasilinear,MCF,Willmore,MCF_generalised}, and the high-order error estimates of \cite{highorderESFEM}. 

For the defect in the velocity $v$, using the \blueon $O(h^k)$ \blueoff error estimates of the finite element interpolation operator \blueon in the $H^1$ norm \blueoff \cite{Demlow2009,highorderESFEM}, \blueon similarly as they were employed in \cite[Section~6]{Willmore}, \blueoff we obtain the estimate \blueon $\|d_v(\cdot,t)\|_{H^1(\Ga_h[\xs(t)])}  = \|\dv\t\|_{\bfK(\xs\t)} \leq c h^k$. \blueoff

Regarding the geometric part, \eqref{eq:coupled system - P1 - normal}--\eqref{eq:coupled system - P1 - V}, the additional terms on the right-hand side compared to those in the evolution equations of pure mean
curvature flow in \cite{MCF} do not present additional difficulties in the consistency error analysis, while the non-linear weights on the left-hand side are treated exactly as in \cite{MCF_generalised}. 
Therefore, by combining the techniques and results of \cite[Lemma~8.1]{MCF} and \cite[Lemma~8.1]{MCF_generalised} we directly obtain for $d_w = (d_\nu,d_V)^T$ the consistency estimate $\|d_w(\cdot,t)\|_{L^2(\Ga_h[\xs(t)])} = \|\dw\t\|_{\bfM(\xs\t)} \leq c h^k$.

For the nonlinear diffusion equation on the surface, \eqref{eq:coupled system - P1 - u}, consistency is shown by the techniques of \cite[Theorem~5.1]{KPower_quasilinear}, and yields the bound $\|d_u(\cdot,t)\|_{L^2(\Ga_h[\xs(t)])} = \|\du\t\|_{\bfM(\xs\t)} \leq c h^k$.
\qed 
\end{proof}

\subsubsection{\bf Stability}

The stability proof is based on the following three key estimates for the surface, concentration, and velocity-law, whose clever combination is the key to our stability proof. These results are energy estimates proved by testing the error equations with the time derivatives of the error, cf.~\cite{MCF}. \bbk The first two stability bounds may formally look similar to those in \cite{MCF,MCF_soldriven,MCF_generalised}, yet their proofs are different and are based on the new results of Section~\ref{section:variable coefficient matrices}. \ebk The proofs are postponed to Section~\ref{section:proof - estimates}.

\begin{lemma}
\label{lemma:three estimates}
For the time interval $[0,T^*]$, where Lemma~\ref{lemma:assumed bounds} holds, there exist constants $c_0>0$ and $c>0$ independent of $h$ and $T^*$  such that the following bounds hold:
\begin{enumerate}
\item {\bf Surface estimate:}
\begin{equation}
\label{eq:A - combined estimate}
	\begin{aligned}
		&\ \frac{c_0}{2} \|\dotew\|_{\bfM}^2 + \frac{1}{2} \diff \| \ew \|_{\bfA}^2 \\
		\leq &\ c_1 \|\doteu\|_{\bfM}^2  
		+ c\,  \big( \| \ex \|_{\bfK}^2 + \| \ev \|_{\bfK}^2 + \| \ew \|_{\bfK}^2 + \| \eu \|_{\bfK}^2 \big)
		+ c \, \|\dw\|_{\bfM^*}^2  \\
		&\ - \diff \Big( \ew^T \big( \bfA(\bfx) - \bfA(\xs) \big) \ws \Big) .
	\end{aligned}
\end{equation}

\item{\bf Concentration estimate:}
\begin{equation}
\label{eq:B - combined estimate}
	\begin{aligned}
		\frac14 \|\doteu\|_{\bfM}^2 + \frac{1}{2} \diff \| \eu \|_{\bfA(\bfx,\us)}^2 
		\leq &\ c \, \big( \|\ex\|_{\bfK}^2 + \|\ev\|_{\bfK}^2 + \|\eu\|_{\bfK}^2 \big) 
		 + c \, \|\du\|_{\bfM^*}^2 \\
		&\ - \half \diff \Big( \eu^T \big(\bfA(\bfx,\bfu) - \bfA(\bfx,\us)\big) \eu \Big) \\
		&\ - \diff \Big( \eu^T \big(\bfA(\bfx,\bfu) - \bfA(\bfx,\us)\big) \us \Big) \\
		&\ - \diff \Big( \eu^T \big(\bfA(\bfx,\us) - \bfA(\xs,\us)\big) \us \Big) .
	\end{aligned}
\end{equation}
\item {\bf Velocity-law estimate:}
	\begin{equation}
	\label{eq:velocity error estimate}
	\begin{aligned}
		\|\ev\|_{\bfK} \leq &\ c \|\eu\|_{\bfK} + c \|\dv\|_{\bfK^*} .
	\end{aligned}
	\end{equation}
\end{enumerate}
\end{lemma}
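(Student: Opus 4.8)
The plan is to establish all three bounds by the energy technique of \cite{MCF}: each of the error equations in \eqref{eq:error equations} is tested with the discrete time derivative of the corresponding error (the equation for $\ew$ with $\dotew$, the one for $\eu$ with $\doteu$), the uniform positive definiteness of the solution-dependent mass and stiffness matrices --- Assumptions~\ref{eq:assumptions on pa_2 K} and \ref{eq:assumptions D} together with the norm equivalences \eqref{eq:Mxuw norm equivalence} and \eqref{eq:Axu norm equivalence} --- is used to extract the coercive quantities $\|\dotew\|_{\bfM}^2$, $\diff\|\ew\|_{\bfA}^2$, $\|\doteu\|_{\bfM}^2$ and $\diff\|\eu\|_{\bfA(\bfx,\us)}^2$ on the left, and every term on the right is controlled by the matrix-comparison results of Section~\ref{section:relating surfaces} (Lemma~\ref{results collection}, Lemmas~\ref{lemma:solution-dependent mass matrix diff}--\ref{lemma:solution-dependent stiffness matrix time difference}) and the $W^{1,\infty}$ bounds of Lemma~\ref{lemma:assumed bounds}. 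Whenever a term cannot be bounded directly by the right-hand side of \eqref{eq:A - combined estimate} or \eqref{eq:B - combined estimate}, I would rewrite it via the Leibniz rule as a full time derivative --- which becomes one of the advertised $-\diff(\cdot)$ contributions --- plus a manageable remainder.

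For the \textbf{surface estimate}, testing \eqref{eq:error equations - w} with $\dotew$ gives on the left $\dotew^T\bfM(\bfx,\bfu,\bfw)\dotew \ge \tfrac12 K_0\|\dotew\|_{\bfM}^2$ and $\dotew^T\bfA(\bfx)\ew = \tfrac12\diff\|\ew\|_{\bfA}^2 - \tfrac12\ew^T\diff(\bfA(\bfx))\ew$, the latter bounded by $c\|\ew\|_{\bfK}^2$ via \eqref{matrix derivative bounds}. On the right: the two mass-matrix differences acting on $\dotws$ are handled by parts~(iv) and~(ii) of Lemma~\ref{lemma:solution-dependent mass matrix diff} (using that $\dot w_h^*$ is $L^\infty$-bounded), producing $c(\|\eu\|_{\bfK}+\|\ew\|_{\bfK}+\|\ex\|_{\bfK})\|\dotew\|_{\bfM}$; the stiffness difference $(\bfA(\bfx)-\bfA(\xs))\ws$ is rewritten as $\diff\big(\ew^T(\bfA(\bfx)-\bfA(\xs))\ws\big) - \ew^T\diff\big(\bfA(\bfx)-\bfA(\xs)\big)\ws - \ew^T(\bfA(\bfx)-\bfA(\xs))\dotws$, the first being the final term of \eqref{eq:A - combined estimate} and the other two bounded by $c(\|\ex\|_{\bfK}^2+\|\ev\|_{\bfK}^2+\|\ew\|_{\bfK}^2)$ using \eqref{matrix difference bounds e_x} and its time-derivative analogue; the nonlinear difference $\bff(\bfx,\bfw,\bfu;\dot\bfu)-\bff(\xs,\ws,\us;\dotus)$ splits into a geometric part (where $|A_h|^2-|A_h^*|^2$ factors through $A_h+A_h^*$, which is $W^{1,\infty}$-bounded, and $A_h-A_h^*$) contributing $c(\|\ex\|_{\bfK}^2+\|\ew\|_{\bfK}^2+\|\eu\|_{\bfK}^2)$, and the crucial contribution from the $\pa_1 K_h\mat_h u_h$-term of $\bff_2$, whose difference produces --- up to lower-order $\ex$- and $W^{1,\infty}$-small terms --- a term of the form $c\|\doteu\|_{\bfM}\|\dotew\|_{\bfM}$, and hence, after Young's inequality, the irreducible $c_1\|\doteu\|_{\bfM}^2$; the defect term gives $c\|\dw\|_{\bfM^*}^2$. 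Absorbing the $\|\dotew\|_{\bfM}^2$-contributions into the left (possible since $\tfrac12 K_0>0$) yields \eqref{eq:A - combined estimate}.

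For the \textbf{concentration estimate}, testing \eqref{eq:error equations - u} with $\doteu$ gives on the left $\doteu^T\bfM(\bfx)\doteu = \|\doteu\|_{\bfM}^2$ (the cross term $\doteu^T\diff(\bfM(\bfx))\eu$ absorbed as $\tfrac14\|\doteu\|_{\bfM}^2+c\|\eu\|_{\bfK}^2$) together with $\tfrac12\diff\|\eu\|_{\bfA(\bfx,\us)}^2$, the term $\tfrac12\eu^T\diff(\bfA(\bfx,\us))\eu$ being bounded by Lemma~\ref{lemma:solution-dependent stiffness matrix time difference}. The three stiffness differences on the right of \eqref{eq:error equations - u} are each converted into a full time derivative --- reproducing exactly the three $-\diff(\cdot)$ terms of \eqref{eq:B - combined estimate} --- plus remainders bounded via Lemma~\ref{lemma:solution-dependent stiffness matrix diff}, the local Lipschitz continuity of $\D$ and $\D'$, the bound $\|\mat_h u_h^*\|_{L^\infty}\le R$, and the $W^{1,\infty}$ bounds, altogether $c(\|\ex\|_{\bfK}^2+\|\ev\|_{\bfK}^2+\|\eu\|_{\bfK}^2)$; the term $\diff\big((\bfM(\bfx)-\bfM(\xs))\us\big)$ is expanded by Leibniz and estimated with \eqref{matrix difference bounds e_x} and its time-derivative version, and the defect contributes $c\|\du\|_{\bfM^*}^2$. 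Absorbing the $\|\doteu\|_{\bfM}^2$-terms yields \eqref{eq:B - combined estimate}. The \textbf{velocity-law estimate} is the simplest: from \eqref{eq:error equations - v} one bounds $\|\ev\|_{\bfK}$ using the $H^1$-stability of $\widetilde I_h$ (legitimate for $k\ge2$), the product rule applied to $\bfV\bullet\bfn-\bfV^*\bullet\bfn^*$, and the $W^{1,\infty}$ bounds of Lemma~\ref{lemma:assumed bounds}, which reduces it to the errors in the variables entering the velocity law plus the defect $\dv$.

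I expect the main obstacle to be twofold. First, because the stiffness matrices $\bfA(\bfx,\bfu)$ are \emph{solution-dependent} one has to control differences such as $\bfA(\bfx,\bfu)-\bfA(\bfx,\us)$ and, worse, their time derivatives --- objects absent from pure mean curvature flow, which is exactly why the $-\diff(\cdot)$ bookkeeping terms appear in the statement and why Lemma~\ref{lemma:solution-dependent stiffness matrix time difference} (with its dependence on $\|\mat_h u_h^*\|_{L^\infty}$ and $\|\D'(u_h^*)\|_{L^\infty}$) is indispensable; keeping scrupulous track of which surface each norm lives on and of all the constants will be the bulk of the work. Second, and more structurally, the coupling term $\pa_1 K_h\mat_h u_h$ in $\bff_2$ forces the surface estimate to carry a $c_1\|\doteu\|_{\bfM}^2$ that it cannot itself absorb; this is only controllable through the concentration estimate, whose factor $\tfrac14$ in front of $\|\doteu\|_{\bfM}^2$ is chosen precisely so that a suitable linear combination of \eqref{eq:A - combined estimate} and \eqref{eq:B - combined estimate} --- with the velocity-law estimate disposing of the $\|\ev\|_{\bfK}$ terms and the $-\diff(\cdot)$ terms absorbed after integration in time --- closes the Gronwall argument of the main theorem. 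It is this interdependence that makes it necessary to prove all three estimates simultaneously.
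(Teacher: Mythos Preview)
Your proposal is correct and follows essentially the same approach as the paper: energy testing of the error equations \eqref{eq:error equations - w} and \eqref{eq:error equations - u} with $\dotew$ and $\doteu$, extracting coercivity via \eqref{eq:Mxuw norm equivalence} and \eqref{eq:Axu norm equivalence}, handling the matrix differences by Lemmas~\ref{lemma:solution-dependent mass matrix diff}--\ref{lemma:solution-dependent stiffness matrix time difference}, and rewriting the recalcitrant stiffness-difference terms as full time derivatives plus controllable remainders --- exactly the $-\diff(\cdot)$ terms appearing in the statement. You have also correctly identified the structural reason for the $c_1\|\doteu\|_{\bfM}^2$ term (the $\pa_1 K_h\,\mat_h u_h$ contribution in $\bff_2$) and that the velocity-law estimate is a direct consequence of the product structure $\bfV\bullet\bfn$ together with the $W^{1,\infty}$ bounds, which the paper refers to \cite{Willmore} for.
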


\begin{remark}
\label{remark:notational conventions - 1}
	Regarding notational conventions: By $c$ and $C$ we will denote generic $h$-independent constants, which might take different values on different occurrences. 
	In the norms the matrices $\bfM(\bfx)$ and $\bfM(\xs)$ will be abbreviated to $\bfM$ and $\bfM^*$, i.e.~we will write $\|\cdot\|_{\bfM}$ for $\|\cdot\|_{\bfM(\bfx)}$ and $\|\cdot\|_{\bfM^*}$ for $\|\cdot\|_{\bfM(\xs)}$, and similarly for the other norms.
\end{remark}

The following result provides the key stability estimate.
\begin{proposition}[{Stability}]
\label{proposition:stability - semi-discrete}
	Assume that, for some $\kappa$ with $1 < \kappa \leq k$, the defects are bounded, for $0 \leq t \leq T$, by 
	\begin{equation}
		\label{eq:assumed defect bounds}
		\begin{aligned}
			\|\dv(t)\|_{\bfK(\xs(t))} \leq c h^\kappa , \quad
			\|\dw(t)\|_{\blueon \bfM(\xs(t))\blueoff } \leq c h^\kappa , \quad
			\|\du(t)\|_{\bfM(\xs(t))} \leq c h^\kappa ,
		\end{aligned}
	\end{equation}
	and that also the errors in the initial errors satisfy
	\begin{equation}
		\label{eq:assumed initial error bounds}
		\| \ew(0) \|_{\bfK(\xs(0))} \le c h^\kappa, \andquad \| \eu(0) \|_{\bfK(\xs(0))} \le c h^\kappa .
	\end{equation}
	Then, there exists  $h_0>0$ such that the following stability estimate holds for all $h\leq h_0$ and $0\leq t \leq T$:
	\begin{equation}
		\label{eq:stability estimate}
		\begin{aligned}
			& \| \ex(t) \|_{\bfK(\xs(t))}^2 + \|\ev\t\|_{\bfK(\xs(t))}^2 
			+ \| \ew(t) \|_{\bfK(\xs(t))}^2 + \| \eu(t) \|_{\bfK(\xs(t))}^2 \\[2mm]
			& \leq  C \big( \| \ew(0) \|_{\bfK(\xs(0))}^2 + \| \eu(0) \|_{\bfK(\xs(0))}^2 \big) \\
			&\ \quad + C \max_{0 \leq s \leq t} \|\dv(s)\|_{\bfK(\xs(s))}^2  
			+ C \int_0^t \big( \|\dw(s)\|_{\bfM(\xs(s))}^2 + \|\du(t)\|_{\bfM(\xs(s))}^2 \big) \d s ,
		\end{aligned}
	\end{equation}
	where $C$ is independent of $h$ and $t$, but depends exponentially on the final time $T$.
\end{proposition}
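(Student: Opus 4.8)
The plan is to follow the energy-method strategy developed for mean curvature flow and its variants in \cite{MCF,MCF_soldriven,MCF_generalised}. All estimates are carried out on the maximal interval $[0,T^*]$ of Lemma~\ref{lemma:assumed bounds}; there the matrix comparison results, the norm equivalences, the lower/upper bounds for $\pa_2 K_h$ and $\D(u_h)$, and the three estimates of Lemma~\ref{lemma:three estimates} all apply, and in particular $\eps := \|\nabla_{\Ga_h[\xs]} e_x\|_{L^\infty(\Ga_h[\xs])}\le h^{(\kappa-1)/2}\le\tfrac14$, so that the $\bfM(\bfx)$-, $\bfA(\bfx)$-, $\bfK(\bfx)$-norms are $h$-uniformly equivalent to the corresponding $\xs$-norms. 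The last step is a bootstrapping argument promoting $T^*$ to $T$.

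First I would combine the three estimates of Lemma~\ref{lemma:three estimates}. Fixing a constant $\mu$ larger than four times the coefficient of $\|\doteu\|_{\bfM}^2$ on the right of the surface estimate \eqref{eq:A - combined estimate}, I multiply the concentration estimate \eqref{eq:B - combined estimate} by $\mu$ and add it to \eqref{eq:A - combined estimate}; the term $\mu\cdot\tfrac14\|\doteu\|_{\bfM}^2$ then absorbs the $\|\doteu\|_{\bfM}^2$ coming from \eqref{eq:A - combined estimate}, leaving $\|\dotew\|_{\bfM}^2$ and $\|\doteu\|_{\bfM}^2$ with positive coefficients on the left together with $\tfrac12\diff\|\ew\|_{\bfA}^2+\tfrac{\mu}{2}\diff\|\eu\|_{\bfA(\bfx,\us)}^2$, while the right-hand side is $c(\|\ex\|_{\bfK}^2+\|\ev\|_{\bfK}^2+\|\ew\|_{\bfK}^2+\|\eu\|_{\bfK}^2)+c\|\dw\|_{\bfM^*}^2+c\|\du\|_{\bfM^*}^2$ plus the four $-\diff(\cdot)$ terms collected from \eqref{eq:A - combined estimate}--\eqref{eq:B - combined estimate}. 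I then remove $\|\ev\|_{\bfK}^2$ via the velocity-law estimate \eqref{eq:velocity error estimate} in the form $\|\ev\|_{\bfK}^2\le c\|\eu\|_{\bfK}^2+c\|\dv\|_{\bfK^*}^2$, and adjoin the elementary bound $\diff\|\ex\|_{\bfK(\bfx)}^2\le\|\ev\|_{\bfK}^2+c\|\ex\|_{\bfK}^2\le c\|\eu\|_{\bfK}^2+c\|\ex\|_{\bfK}^2+c\|\dv\|_{\bfK^*}^2$, which follows from $\dotex=\ev$ and the matrix-derivative bounds \eqref{matrix derivative bounds}.

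Next I would integrate from $0$ to $t\in[0,T^*]$. The terms $\diff\|\ew\|_{\bfA}^2$, $\diff\|\eu\|_{\bfA(\bfx,\us)}^2$, $\diff\|\ex\|_{\bfK}^2$ give $\|\ew(t)\|_{\bfA}^2$, $\|\eu(t)\|_{\bfA(\bfx,\us)}^2$, $\|\ex(t)\|_{\bfK}^2$ up to their $t=0$ values; the four hidden time-derivative terms integrate to boundary contributions. At $t=0$ these vanish or are $O(h^{2\kappa})$: since $\ex(0)=\bfzero$ we have $\bfx(0)=\xs(0)$, so $\bfA(\bfx(0))-\bfA(\xs(0))=0$ and $\bfA(\bfx(0),\us(0))-\bfA(\xs(0),\us(0))=0$, while the rest are bounded with $\|\eu(0)\|_{\bfK}\le ch^\kappa$ and Lemma~\ref{lemma:solution-dependent stiffness matrix diff}(ii). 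At time $t$ each boundary term is estimated, e.g.
\[
	\bigl|\ew(t)^T\bigl(\bfA(\bfx(t))-\bfA(\xs(t))\bigr)\ws(t)\bigr|\le c\,\|\ew(t)\|_{\bfA}\,\|\ex(t)\|_{\bfA}\,\|w_h^*\|_{W^{1,\infty}}\le\tfrac18\|\ew(t)\|_{\bfA}^2+c\,\|\ex(t)\|_{\bfA}^2 ,
\]
using \eqref{matrix difference bounds e_x} and the $W^{1,\infty}$ bound on $w_h^*$, and analogously for the $u$-boundary terms via Lemma~\ref{lemma:solution-dependent stiffness matrix diff}(i)--(ii), where the smallness $\|\nabla e_u\|_{L^\infty}\le h^{(\kappa-1)/2}$ is what lets the quadratic-in-$\ew(t)$, $\eu(t)$ pieces be absorbed into the left-hand side (up to $\bfx$--$\xs$ norm equivalence), the $\|\ex(t)\|_{\bfA}^2$ remainders joining the Gronwall functional. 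To recover the $\bfM$-parts of the $\bfK$-norms I use $\|\ew(t)\|_{\bfM}^2\le\|\ew(0)\|_{\bfM}^2+\int_0^t\|\dotew\|_{\bfM}^2\,\d s+c\int_0^t\|\ew\|_{\bfM}^2\,\d s$ (from $\diff(\ew^T\bfM(\bfx)\ew)\le\|\dotew\|_{\bfM}^2+c\|\ew\|_{\bfM}^2$) and its analogue for $\eu$, the integrals of $\|\dotew\|_{\bfM}^2$, $\|\doteu\|_{\bfM}^2$ being controlled by the already bounded left-hand side. Writing $\tilde E(t):=\|\ex(t)\|_{\bfK(\xs)}^2+\|\ew(t)\|_{\bfK(\xs)}^2+\|\eu(t)\|_{\bfK(\xs)}^2$, this produces an inequality of the form
\[
	\tilde E(t)\le C\,\tilde E(0)+C\max_{0\le s\le t}\|\dv(s)\|_{\bfK^*}^2+C\int_0^t\bigl(\|\dw(s)\|_{\bfM^*}^2+\|\du(s)\|_{\bfM^*}^2\bigr)\d s+C\int_0^t\tilde E(s)\,\d s ,
\]
and Gronwall's inequality, together with $\tilde E(0)=\|\ew(0)\|_{\bfK(\xs(0))}^2+\|\eu(0)\|_{\bfK(\xs(0))}^2$ (as $\ex(0)=\bfzero$) and $\|\ev(t)\|_{\bfK}^2\le c\|\eu(t)\|_{\bfK}^2+c\|\dv(t)\|_{\bfK^*}^2$, gives \eqref{eq:stability estimate} on $[0,T^*]$.

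Finally, inserting the assumed bounds \eqref{eq:assumed defect bounds}--\eqref{eq:assumed initial error bounds} gives $\tilde E(t)+\|\ev(t)\|_{\bfK(\xs)}^2\le Ch^{2\kappa}$ on $[0,T^*]$; an inverse inequality (in dimension $d=2$, i.e.\ factor $h^{-1}$) then yields $\|e_x(t)\|_{W^{1,\infty}}+\|e_v(t)\|_{W^{1,\infty}}+\|e_w(t)\|_{W^{1,\infty}}+\|e_u(t)\|_{W^{1,\infty}}\le Ch^{\kappa-1}$, which is strictly smaller than $h^{(\kappa-1)/2}$ for $h$ small since $\kappa>1$. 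Hence the defining inequalities \eqref{eq:assumed bounds} hold with strict inequality at $t=T^*$, so by continuity $T^*$ cannot be maximal unless $T^*=T$, and \eqref{eq:stability estimate} holds on all of $[0,T]$. I expect the main obstacle to be the bookkeeping around the hidden time-derivative terms and the recovery of full $\bfK$-norm control: one must verify that every boundary contribution at time $t$ is genuinely \emph{absorbed} into the left-hand side (not merely bounded by $\tilde E(t)$, which would be circular), and that the $\|\dotew\|_{\bfM}^2$, $\|\doteu\|_{\bfM}^2$ terms retained on the left really suffice to lift the $\bfA$-norm bounds to $\bfK$-norm bounds — both of which hinge on the smallness $\eps\le h^{(\kappa-1)/2}$ supplied by Lemma~\ref{lemma:assumed bounds}.
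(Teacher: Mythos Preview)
Your proposal is correct and follows essentially the same route as the paper's proof: a weighted combination of the surface and concentration estimates (the paper uses weight $8c_1$, you use $\mu>4c_1$), elimination of $\|\ev\|_{\bfK}$ via the velocity-law estimate, integration in time with careful treatment of the boundary contributions from the total-derivative terms, recovery of the $\bfM$-part of the $\bfK$-norm from the retained $\int_0^t\|\dot\bfe\|_{\bfM}^2$, Gronwall, and the final bootstrap $T^*=T$ via the inverse inequality. The only place where the paper is slightly more explicit than your sketch is in handling the residual $c_2\|\eu(t)\|_{\bfM}^2$ on the right-hand side: the paper introduces a free Young parameter $\varrho$ in the inequality $\|\eu(t)\|_{\bfM}^2\le \varrho\int_0^t\|\doteu\|_{\bfM}^2+\varrho^{-1}c\int_0^t\|\eu\|_{\bfM}^2+\|\eu(0)\|_{\bfM}^2$ and chooses $\varrho$ small enough that $c_2\varrho$ is absorbed by the left-hand coefficient of $\int_0^t\|\doteu\|_{\bfM}^2$, whereas your version with $\varrho=1$ would only work if that coefficient happened to be large enough --- but you have already flagged exactly this absorption issue as the point requiring care, so the fix is immediate.
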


The proof to this result is obtained by an adept combination of the three estimates of Lemma~\ref{lemma:three estimates} and a Gronwall argument, and is postponed to Section~\ref{section:proof - stability}. We also note that by the consistency result, Proposition~\ref{proposition:consistency}, the estimates \eqref{eq:assumed defect bounds} hold with $\kappa = k$.

\subsection{\bf Convergence}
We are now in the position to state the main result of the paper, which provide optimal-order error bounds for the finite element semi-discretisation \eqref{eq:semi-discrete problem - P1}, for finite elements of polynomial degree $k \geq 2$.
\begin{theorem}
\label{theorem:semi-discrete error estimates} 
	Suppose Problem \ref{P1} admits a  sufficiently regular exact solution $(X,v,\nu,V,u)$  on the time interval $t \in [0,T]$ for which the flow map $X(\cdot,t)$ is non-degenerate so that $\Ga[X(\cdot,t)]$ is a regular orientable immersed hypersurface. 
	Then there exists a constant $h_0 > 0$ such that for all mesh sizes $h \leq h_0$ the following error bounds for the lifts of the discrete position, velocity, normal vector, normal velocity and concentration over the exact surface $\Ga[X(\cdot,t)]$ for $0 \leq t \leq T$:
	\begin{align*}
		\|x_h^L(\cdot,t) - \Id_{\Ga[X(\cdot,t)]}\|_{H^1(\Ga[X(\cdot,t)])^3} \leq &\ Ch^k, \\
		\|v_h^L(\cdot,t) - v(\cdot,t)\|_{H^1(\Ga[X(\cdot,t)])^3} \leq &\ C h^k, \\
		\|\nu_h^L(\cdot,t) - \nu(\cdot,t)\|_{H^1(\Ga[X(\cdot,t)])^3} \leq &\ C h^k, \\
		\|V_h^L(\cdot,t) - V(\cdot,t)\|_{H^1(\Ga[X(\cdot,t)])} \leq &\ C h^k, \\
		\|u_h^L(\cdot,t) - u(\cdot,t)\|_{H^1(\Ga[X(\cdot,t)])} \leq &\ C h^k, \\
	\text{and} \qquad \qquad 
		\|X_h^\ell(\cdot,t) - X(\cdot,t)\|_{H^1(\Ga^0)^3} \leq &\ Ch^k ,
	\intertext{furthermore, by \eqref{eq:velocity law-alt} and the smoothness of $K$, for the mean curvature we also have}
		\|H_h^L(\cdot,t) - H(\cdot,t)\|_{H^1(\Ga[X(\cdot,t)])} \leq &\ C h^k .
	\end{align*}
	The constant $C$ is independent of $h$ and $t$, but depends on bounds of higher derivatives of the solution $(X,v,\nu,V,u)$ of the coupled problem, and exponentially on the length $T$ of the time interval.
\end{theorem}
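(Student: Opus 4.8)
The plan is to deduce the theorem from the consistency bounds of Proposition~\ref{proposition:consistency} and the stability estimate of Proposition~\ref{proposition:stability - semi-discrete}, closing the circular dependence on the a priori assumption \eqref{eq:assumed bounds} by a bootstrapping (continuation) argument, and finally transferring the resulting matrix--vector error bounds to $H^1$ errors of the lifted finite element functions on $\Ga[X(\cdot,t)]$. First I would fix $\kappa = k$: Proposition~\ref{proposition:consistency} then shows that the defects satisfy \eqref{eq:assumed defect bounds} with this $\kappa$, while the interpolation and Ritz-map error estimates recalled in the proof of Lemma~\ref{lemma:assumed bounds}(a) give the initial error bounds \eqref{eq:assumed initial error bounds}. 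Thus the hypotheses of Proposition~\ref{proposition:stability - semi-discrete} are met.

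The central step is the bootstrap. Let $T^* \in (0,T]$ be maximal such that the four $W^{1,\infty}$ bounds \eqref{eq:assumed bounds} hold on $[0,T^*]$; by Lemma~\ref{lemma:assumed bounds}(a), $T^*>0$. On $[0,T^*]$ all comparison results of Section~\ref{section:relating surfaces} and the three energy estimates of Lemma~\ref{lemma:three estimates} are available, so Proposition~\ref{proposition:stability - semi-discrete} yields
\[
	\| \ex(t) \|_{\bfK(\xs(t))}^2 + \| \ev(t) \|_{\bfK(\xs(t))}^2 + \| \ew(t) \|_{\bfK(\xs(t))}^2 + \| \eu(t) \|_{\bfK(\xs(t))}^2 \le C h^{2k}, \qquad 0 \le t \le T^*,
\]
with $C$ independent of $h$ and $T^*$. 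Applying an inverse inequality on $S_h[\xs(t)]$ \cite[Theorem~4.5.11]{BrennerScott} and using that the surface is two-dimensional, each of $e_x,e_v,e_w,e_u$ satisfies $\|\cdot(\cdot,t)\|_{W^{1,\infty}(\Ga_h[\xs(t)])} \le C h^{-1}\cdot Ch^k = Ch^{k-1}$ on $[0,T^*]$. Since $k \ge 2$ we have $k-1 > (k-1)/2 = (\kappa-1)/2$, so for $h \le h_0$ small enough $Ch^{k-1} \le \tfrac12 h^{(\kappa-1)/2}$, i.e.\ \eqref{eq:assumed bounds} holds on $[0,T^*]$ with the strict constant $\tfrac12$. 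As $t \mapsto \|e_\cdot(\cdot,t)\|_{W^{1,\infty}(\Ga_h[\xs(t)])}$ is continuous, if $T^*<T$ these strict bounds would persist slightly beyond $T^*$, contradicting maximality; hence $T^* = T$, the stability estimate holds on all of $[0,T]$, and in particular $\|\ex(t)\|_{W^{1,\infty}}\le Ch^{k-1}$ keeps $\Ga_h[\bfx(t)]$ admissible and $O(h^k)$-close to $\Ga_h[\xs(t)]$, so all lifts below are well defined. (This bootstrap may equivalently be viewed as the final step in the proof of Proposition~\ref{proposition:stability - semi-discrete}.)

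Next I would transfer the bounds to the exact surface. For each variable, split, e.g., $u_h^L - u = \big(u_h^L - R_h^u u\big) + \big(R_h^u u - u\big)$, and analogously for $x,v,\nu,V$. The first summand is the lift of the finite element error function associated with $\eu$, which lives on $\Ga_h[\xs(t)]$; by the norm identities \eqref{eq:norms}, the $h$-uniform equivalence of $\|\cdot\|_{\bfK(\bfx)}$ and $\|\cdot\|_{\bfK(\xs)}$ (a consequence of \eqref{eq:assumed bounds} together with Lemma~\ref{results collection}), and the fact that the lift $\Ga_h[\xs(t)] \to \Ga[X(\cdot,t)]$ distorts $H^1$ norms only by a factor $1+O(h^k)$ \cite{Demlow2009,highorderESFEM}, it is bounded by $C\|\eu(t)\|_{\bfK(\xs(t))} \le Ch^k$, and likewise for the remaining variables using $\ex,\ev,\ew$. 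The second summand is an error of the quasi-linear Ritz map for $u$, of the generalised Ritz map for $\nu$ and $V$, and of interpolation for $x$ and $v$, each of order $O(h^k)$ in $H^1$ by \cite{highorderESFEM,KPower_quasilinear,Demlow2009}. Adding the two contributions gives all stated estimates, including $\|X_h^\ell(\cdot,t)-X(\cdot,t)\|_{H^1(\Ga^0)^3}\le Ch^k$.

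Finally, the mean curvature estimate is a corollary: with $H$ recovered from $u,V$ through $H=-K(u,V)$ (cf.~\eqref{eq:velocity law-alt}) and $H_h$ defined by the corresponding post-processing of $u_h,V_h$, the bound $\|H_h^L-H\|_{H^1(\Ga[X(\cdot,t)])}\le Ch^k$ follows from the smoothness of $K$, the chain rule, interpolation error estimates, and the $H^1$ and $W^{1,\infty}$ error bounds already obtained for $u_h$ and $V_h$. The genuine difficulty lies in Proposition~\ref{proposition:stability - semi-discrete}, proved in Section~\ref{section:proof - stability} via Lemma~\ref{lemma:three estimates} and the solution-dependent matrix lemmas of Section~\ref{section:relating surfaces}; within the present argument the only delicate points are the power counting in the bootstrap --- which is exactly where the restriction $k \ge 2$ enters --- and the careful bookkeeping of geometric perturbation estimates needed to pass from the discrete to the exact surface.
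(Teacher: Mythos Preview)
Your proposal is correct and follows essentially the same approach as the paper: decompose each error into a discrete part controlled by the stability estimate of Proposition~\ref{proposition:stability - semi-discrete} (fed with the consistency bounds of Proposition~\ref{proposition:consistency} and the $O(h^k)$ initial errors) and a projection part bounded by the interpolation/Ritz-map estimates, then use \eqref{eq:velocity law-alt} for $H$. The only organisational difference is that the paper places the bootstrap (showing $T^*=T$ via the inverse inequality and $k\ge 2$) entirely inside the proof of Proposition~\ref{proposition:stability - semi-discrete} in Section~\ref{section:proof - stability}, so that the theorem's proof invokes stability directly on $[0,T]$; you reproduce that step here but correctly note the equivalence.
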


\begin{proof}
The errors are decomposed using the interpolation $I_h$ for $X$ and $v$, using the Ritz map $R_h$ \eqref{eq:definition Ritz map} for $w = (\nu,V)^T$, and using the quasi-linear Ritz map $R_h^u$ \eqref{eq:definition quasi-linear Ritz map} for $u$. For a variable $z \in \{X,v,w,u\}$ and the appropriate ESFEM projection operator $\calP_h$ (with $\calP_h = (\widetilde{\calP}_h)^\ell$), we rewrite the error as
\begin{equation*}
	z_h^L - z = \big( \widehat{z}_h - \widetilde{\calP}_h z \big)^\ell + \big( \calP_h z - z \big) .
\end{equation*}

In each case, the second terms are bounded as $c h^k$ by the error estimates for the above three operators, \cite{Demlow2009,highorderESFEM,KPower_quasilinear,EllRan21}. By the same arguments, the initial values satisfy the $O(h^k)$ bounds of \eqref{eq:assumed initial error bounds}.

The first terms on the right-hand side are bounded using the stability estimate Proposition~\ref{proposition:stability - semi-discrete} together with the defect bounds of Proposition~\ref{proposition:consistency} (and the above error estimates in the initial values), to obtain
\begin{equation*}
	\| \ex(t) \|_{\bfK(\xs(t))} + \|\ev\t\|_{\bfK(\xs(t))} 
	+ \| \ew(t) \|_{\bfK(\xs(t))} + \| \eu(t) \|_{\bfK(\xs(t))} \leq c h^k .
\end{equation*}

Combining the two estimates completes the proof.
\qed 
\end{proof}

\medskip
Sufficient regularity assumptions are the following: with bounds that are uniform in $t\in[0,T]$, we assume $X(\cdot,t) \in  H^{k+1}(\Ga^0)^3$, $v(\cdot,t) \in H^{k+1}(\Ga[X(\cdot,t)])^3$, and for $w=(\nu,V,u)$ we assume $\ w(\cdot,t), \mat w(\cdot,t) \in W^{k+1,\infty}(\Ga[X(\cdot,t)])^5$. 

\begin{remark}
Under these regularity conditions on the solution, for the numerical analysis we only require local Lipschitz continuity of the non-linear functions in Problem~\ref{P1}. These local-Lipschitz conditions are, of course, not sufficient to ensure the existence of even just a weak solution. For regularity results we refer to \cite{diss_Buerger} and \cite{ABG_MCFdiff}. 
Here we restrict our attention to cases where a sufficiently regular solution exists, excluding the formation of singularities but not self-intersections, which we can then approximate with optimal-order under weak conditions on the nonlinearities.
\end{remark}
\begin{remark}
	We note here that the above theorem remains true if we add a non-linear term $f(u,\nbg u)$ (locally Lipschitz in both variables) to the diffusion equation \eqref{diffeqn}. This is due to the fact that we already control the $W^{1,\infty}$ norm of both the exact and numerical solutions (see~\eqref{eq:W^{1,infty} bounds for exact solutions} and \eqref{eq:W^{1,infty} bounds for numerical solutions}). Hence the corresponding terms in the stability analysis can be estimated analogously to the non-linear terms in the geometric evolution equations, see Section~\ref{subsection:energy est for geometric eqns}.
\end{remark}

\begin{remark}
\label{remark:convergence result}
	The remarks made after the convergence result in \cite[Theorem~4.1]{MCF} apply also here, which we briefly recap here. 
	
	-- A key issue in the proof is to ensure that the $W^{1,\infty}$ norm of the position error of the surfaces remains small. The $H^1$ error bound and an inverse estimate yield an $O(h^{k-1})$ error bound in the $W^{1,\infty}$ norm. For two-dimensional surfaces, this is small only for $k \geq 2$, which is why we impose the condition $k \geq 2$ in the above result. For higher-dimensional surfaces a larger polynomial degree is required.
	
	-- Provided the flow map $X$ parametrises sufficiently regular surfaces $\Ga[X]$, the admissibility of the numerical triangulation over the whole time interval $[0,T]$ is preserved for sufficiently fine grids. 
\end{remark}

\section{\bf Proof of Lemma~\ref{lemma:three estimates}}
\label{section:proof - estimates}

The proof Lemma~\ref{lemma:three estimates} is separated into three subsections for the three estimates.

The proofs extend the main ideas of the proof of Proposition~7.1 of \cite{MCF} to the coupled mean curvature flow and diffusion system. Together they form the main technical part of the stability analysis. The first two estimates are based on energy estimates testing the error equations \eqref{eq:error equations - w} and \eqref{eq:error equations - u} with the time-derivative of the corresponding error. The third bound for the error in the velocity is shown using Lemma~5.3 of \cite{Willmore}. 
The proofs combines the approach of \cite[Proposition~7.1]{MCF} with those of \cite[Theorem~4.1]{MCF_soldriven} on handling the time-derivative term in $\bff$ in \eqref{eq:error equations - w}, of \cite[Proposition~7.1]{MCF_generalised} on dealing with the solution-dependent mass matrix $\bfM(\bfx,\bfu,\bfw)$ in \eqref{eq:error equations - w}.

The estimates for the terms with $\bfu$-dependent stiffness matrices in \eqref{eq:error equations - u} require new and more elaborate techniques, which are developed here, slightly inspired by the estimates for the stiffness-matrix differences in \cite{KLLP2017}.

Due to these reasons, a certain degree of familiarity with these papers (but at least \cite{MCF}) is required for a concise presentation of this proof.

\begin{remark}
	In addition to Remark~\ref{remark:notational conventions - 1}, throughout the present proof we will use the following conventions: 
	References to the proof Proposition~7.1 in \cite{MCF} are abbreviated to \cite{MCF}, unless a specific reference therein is given. For example, (i) in part (A) of the proof of Proposition~7.1 of \cite{MCF} is referenced as \cite[(A.i)]{MCF}. 	
\end{remark}

\subsection{\bf {Proof of \eqref{eq:A - combined estimate}}}
\label{subsection:energy est for geometric eqns}
\begin{proof}
We test \eqref{eq:error equations - w} with \blueon $\dotew$ \blueoff  and obtain:
\begin{equation}
	\begin{aligned}
		\dotew^T \bfM(\bfx,\bfu,\bfw) \dotew + \dotew^T \bfA(\bfx) \ew 
		= &\ - \dotew^T \big(\bfM(\bfx,\bfu,\bfw) - \bfM(\bfx,\us,\ws)\big) \dotws \\ 
		&\ - \dotew^T \big(\bfM(\bfx,\us,\ws) - \bfM(\xs,\us,\ws)\big) \dotws \\
		&\ - \dotew^T \big(\bfA(\bfx)-\bfA(\xs)\big) \ws \\
		&\ + \dotew^T \big(\bff(\bfx,\bfw,\bfu;\dot\bfu) - \bff(\xs,\ws,\us;\dotus) \big) \\
		&\ - \dotew^T \bfM(\xs) \dw .
	\end{aligned}
\end{equation}


(i) For the first term on the left-hand side, by the definition of $\bfM(\bfx,\bfu,\bfw) = \bfM(\bfx,\bfu,\bfV)$ and the $h$-uniform lower bound from \eqref{eq:uniform bounds for pa_2 Kh}, 
 we have
\begin{align*}
	\dotew^T \bfM(\bfx,\bfu,\bfw) \dotew = &\ \dotew^T \bfM(\bfx,\bfu,\bfV) \dotew \\
	= &\ \int_{\Ga_h[\bfx]} \!\!\!\! \pa_2 K(u_h,V_h) \, |\mat_h e_w|^2 \geq \tfrac12 K_0 \int_{\Ga_h[\bfx]} \!\!\!\! |\mat_h e_w|^2 = c_0  \|\dotew\|_{\bfM}^2 ,
\end{align*}
with the constant $c_0 = \tfrac12 K_0$, see Lemma~\ref{lemma:assumed bounds}.

(ii) By the symmetry of $\bfA$ and \eqref{matrix derivative bounds} we obtain
\begin{align*}
	\dotew^T \bfA(\bfx) \ew 
	= &\ - \frac{1}{2} \ew^T \diff \big(\bfA(\bfx)\big) \ew 
	+ \frac{1}{2} \diff \Big( \ew^T \bfA(\bfx) \ew \Big) 
	\\ &
	\geq - c \| \ew \|_{\bfA}^2 + \frac{1}{2} \diff \| \ew \|_{\bfA}^2 .
\end{align*}

(iii) On the right-hand side the two terms with differences of the solution-dependent mass matrix (recall the notation $\bfM(\bfx,\bfu,\bfw) = \bfM(\bfx,\bfu,\bfV)$) are estimated using Lemma~\ref{lemma:solution-dependent mass matrix diff}, together with \eqref{eq:assumed bounds} and \eqref{eq:W^{1,infty} bounds for exact solutions}, \eqref{eq:W^{1,infty} bounds for numerical solutions}. 

For the first term, by Lemma~\ref{lemma:solution-dependent mass matrix diff} (iv) (with $\dotew$, $\dotws$, and $\bfu$, $\us$ in the role of $\bfw$, $\bfz$, and $\bfu$, $\us$, respectively), together with \eqref{eq:assumed bounds}, the uniform bounds \eqref{eq:W^{1,infty} bounds for exact solutions} and \eqref{eq:W^{1,infty} bounds for numerical solutions}, and the $W^{1,\infty}$ bound on $\mat_h u_h^*$, proved in \cite[(A.iii)]{MCF}. Using the norm equivalence \eqref{norm equivalence}, we altogether obtain
\begin{align*}
	- \dotew \big( \bfM(\bfx,\bfu,\bfV) - \bfM(\bfx,\us,\Vs) \big) \dotws 
	\leq &\ c \big( \| \eu \|_{\bfM} + \| \bfe_\bfV \|_{\bfM} \big) \, \| \dotew \|_{\bfM} \\
	\leq &\ c \big( \| \eu \|_{\bfM} + \| \ew \|_{\bfM} \big) \, \| \dotew \|_{\bfM} .
\end{align*}

For the other term we apply Lemma~\ref{lemma:solution-dependent mass matrix diff} (ii) (with $\dotew$, $\dotws$, and $\us$ in the role of $\bfw$, $\bfz$, and $\bfu$, respectively), and obtain
\begin{align*}
	- \dotew^T \big( \bfM(\bfx,\us,\Vs)-\bfM(\xs,\us,\Vs) \big) \dotws
	\leq c \| \ex \|_{\bfA}  \, \| \dotew \|_{\bfM} ,
\end{align*}
where we again used the norm equivalence \eqref{norm equivalence}.

(iv) The third term on the right-hand side is estimated exactly as in \cite[(A.iv)]{MCF} by (recalling $\bfK = \bfM + \bfA$)
\begin{align*}
	- \dotew^T \big( \bfA(\bfx) - \bfA(\xs) \big) \ws 
	\leq &\ - \diff \Big( \ew^T \big( \bfA(\bfx) - \bfA(\xs) \big) \ws \Big) 
		\\ &\ 
	+ c \| \ew \|_{\bfA} \big( \| \ev \|_{\bfK} + \| \ex \|_{\bfK} \big) .
\end{align*}

(v) Before estimating the non-linear terms $\bff$, let us split off the part which depends linearly on $\dot{\bfu}$, cf.~\eqref{eq:rhs functions f_2}:
\begin{equation*}
	\bff(\bfx,\bfw,\bfu;\dot\bfu) = \widetilde{\bff}(\bfx,\bfw,\bfu) + \bfF(\bfx,\bfw,\bfu)\dot\bfu .
\end{equation*}

Since the estimates for the non-linear term \cite[(A.v)]{MCF} were shown for a general locally Lipschitz function, they apply for the estimates for the difference $\widetilde{\bff} -  \widetilde{\bff}^*$ as well (note, however, that $\bff$ defined in \cite[Section~3.3]{MCF} is different from the one here), and yield 
\begin{align*}
	\dotew^T \big( \widetilde{\bff}(\bfx,\bfw,\bfu) - \widetilde{\bff}(\xs,\ws,\us) \big) 
	&\leq c \| \dotew \|_{\bfM} \big( \| \ex \|_{\bfK} + \| \ew \|_{\bfA} + \| \eu \|_{\bfA} \big) .
\end{align*}

The remaining difference is bounded
\begin{align*}
	&\ \dotew^T \big( \bfF(\bfx,\bfw,\bfu) \dot\bfu - \bfF(\xs,\ws,\us) \dotus \big) \\
	\leq &\ \dotew^T \bfF(\bfx,\bfw,\bfu) \doteu + \dotew^T \big( \bfF(\bfx,\bfw,\bfu) - \bfF(\xs,\ws,\us) \big) \dotus \\
	\leq &\ c \|\dotew\|_{\bfM} \|\doteu\|_{\bfM} + c \|\dotew\|_{\bfM} \big( \| \ex \|_{\bfK} + \| \ew \|_{\bfA} + \| \eu \|_{\bfA} \big) ,
\end{align*}
where for the first term we have used \eqref{eq:rhs functions f_2} and the $W^{1,\infty}$ boundedness of the numerical solutions \eqref{eq:W^{1,infty} bounds for numerical solutions}, while the second term is bounded by the same arguments used for the previous estimate.

(vi) The defect term is simply bounded by the Cauchy--Schwarz inequality and a norm equivalence \eqref{norm equivalence}:
\begin{align*}
	- \dotew^T \bfM(\xs) \dw \leq &\ c \| \dotew \|_{\bfM} \|\dw\|_{\bfM^*} . 
\end{align*}

Altogether, collecting the estimates in (i)--(vi), and using Young's inequality and absorptions to the left-hand side, we obtain the desired estimate \eqref{eq:A - combined estimate}.
\qed
\end{proof}

\subsection{\bf {Proof of \eqref{eq:B - combined estimate}}}

\begin{proof}
We test \eqref{eq:error equations - u} with $\doteu$, and obtain:
\begin{equation}
\label{eq:error equation B - tested}
	\begin{aligned}
		\doteu^T \diff \Big( \bfM(\bfx) \eu \Big) + \doteu^T \bfA(\bfx,\us) \eu 
		= &\ - \doteu^T \diff \Big( \big(\bfM(\bfx) - \bfM(\xs) \big) \us \Big) \\
		&\ - \doteu^T \big( \bfA(\bfx,\bfu) - \bfA(\bfx,\us) \big) \eu \\
		&\ - \doteu^T \big( \bfA(\bfx,\bfu) - \bfA(\bfx,\us) \big) \us \\ 
		&\ - \doteu^T \big( \bfA(\bfx,\us) - \bfA(\xs,\us) \big) \us \\ 
		&\ - \doteu^T \bfM(\xs) \du .
	\end{aligned}
\end{equation}

To estimate these terms we again use the same techniques as in Section~\ref{subsection:energy est for geometric eqns}.

(i) For the first term on the left-hand side, using \eqref{matrix derivative bounds} and the Cauchy--Schwarz inequality, we obtain
\begin{align*}
	\doteu^T \diff \Big( \bfM(\bfx) \eu \Big) 
	= &\ \doteu^T \diff \big( \bfM(\bfx) \big) \eu + \|\doteu\|_{\bfM}^2 \\
	\geq &\ \|\doteu\|_{\bfM}^2 - c \|\doteu\|_{\bfM} \|\eu\|_{\bfM} .
\end{align*}
Then Young's inequality yields
\begin{equation}
\label{eq:c error est - i}
	\doteu^T \diff \Big( \bfM(\bfx) \eu \Big) 
	\geq \half \|\doteu\|_{\bfM}^2 - c \|\eu\|_{\bfM}^2 . 
\end{equation}

(ii) The second term on the left-hand side is bounded, using the symmetry of $\bfA(\bfx,\us)$ and \eqref{u-stiffness matrix derivative}, (via a similar argument as in (A.ii)), by
\begin{align*}
	\doteu^T \bfA(\bfx,\us) \eu 
	= &\  \frac{1}{2} \diff \Big( \eu^T \bfA(\bfx,\us) \eu \Big) - \frac{1}{2} \eu^T \diff \big(\bfA(\bfx,\us)\big) \eu \\ 
	\geq &\	\frac{1}{2} \diff \| \eu \|_{\bfA(\bfx,\us)}^2  - c \| \eu \|_{\bfA}^2 .
\end{align*}
For the estimate \eqref{u-stiffness matrix derivative}, which follows from Lemma~\ref{lemma:solution-dependent stiffness matrix time difference}, the latter requires the bounds
\begin{equation}
\label{eq:boundedness of mat_h u_h^*}
	\begin{aligned}
		\|\mat_h u_h^*\|_{L^{\infty}(\Ga_h^*)} \leq &\ R , \\
		\|\D'(u_h^*)\|_{L^{\infty}(\Ga_h^*)} \leq &\ R .
	\end{aligned}
\end{equation}
The first estimate is proved exactly as in \cite[(A.iii)]{MCF}, while the second one is shown (by a similar idea) using the local Lipschitz continuity of $\D'$ (Assumption~\ref{eq:assumptions D}) and the bounds \eqref{eq:W^{1,infty} bounds for exact solutions}, for sufficiently small $h$:
\begin{align*}
	\|\D'(u_h^*)\|_{L^{\infty}} \leq \|\D'(u_h^*) - \D'(u)\|_{L^{\infty}} + \|\D'(u)\|_{L^{\infty}} \leq 2 R .
\end{align*}

(iii) The time-differentiated mass matrix difference, the first term on the right-hand side of \eqref{eq:error equation B - tested}, is bounded, by the techniques for the analogous term in (A.iii) of the proof of \cite[Proposition~6.1]{KLLP2017}, by
\begin{align*}
	&\ - \doteu^T \diff \Big( \big( \bfM(\bfx) - \bfM(\xs) \big) \us \Big) \\
	\leq &\ c \| \mat_h e_u \|_{L^2(\Ga_h[\bfx])} \| \nb_{\Ga_h[\bfx]}e_x \|_{L^2(\Ga_h[\bfx])} \| \mat_h u_h^* \|_{L^\infty(\Ga_h[\bfx])} \\
	&\ + c \|\mat_h e_u\|_{L^2(\Ga_h[\bfx])} \big(\| \nb_{\Ga_h[\bfx]}e_x \|_{L^2(\Ga_h[\bfx])} + \| \nb_{\Ga_h[\bfx]}e_v \|_{L^2(\Ga_h[\bfx])} \big) \| \mat_h u_h^* \|_{L^\infty(\Ga_h[\bfx])} \\
	\leq &\ c \|\doteu\|_{\bfM} \|\ex\|_{\bfA} 
	+ c \|\doteu\|_{\bfM} \big(\|\ex\|_{\bfA} + \|\ev\|_{\bfA} \big) .
\end{align*}
For the last inequality we use \eqref{eq:boundedness of mat_h u_h^*}.

(iv) 
%
%
By the symmetry of the matrices $\bfA(\bfx,\bfu)$ and $\bfA(\bfx,\us)$, and the product rule, we obtain
\begin{equation}
\label{eq:estimate for term (iv) - pre}
	\begin{aligned}
		- \doteu^T \big(\bfA(\bfx,\bfu) - \bfA(\bfx,\us)\big) \eu 
		= &\ - \half \diff \Big( \eu^T \big(\bfA(\bfx,\bfu) - \bfA(\bfx,\us)\big) \eu \Big) \\
		&\ + \eu^T \diff \big(\bfA(\bfx,\bfu) - \bfA(\bfx,\us)\big) \eu .
	\end{aligned}
\end{equation}
The first term will be estimated later after an integration in time, while the second term is bounded similarly to \cite[(A.iv)]{MCF}. 

Lemma~\ref{lemma:solution-dependent stiffness matrix time difference} and the Leibniz formula yields, for any vectors $\bfw, \bfz \in \R^N$ (which satisfy $\mat_h w_h = \mat_h z_h = 0$), 
\begin{equation}
\label{eq:stiffness matrix difference diff - c variant}
\begin{aligned}
	&\ \bfw^T \diff \big(\bfA(\bfx,\bfu) - \bfA(\bfx,\us)\big) \bfz \\
	= &\ \diff \int_{\Ga_h[\bfx]} \big( \D(u_h) - \D(u_h^*) \big) \, \nb_{\Ga_h[\bfx]} w_h \cdot \nb_{\Ga_h[\bfx]} z_h \\
	= &\ \int_{\Ga_h[\bfx]} \mat_h \big( \D(u_h) - \D(u_h^*) \big) \, \nb_{\Ga_h[\bfx]} w_h \cdot \nb_{\Ga_h[\bfx]} z_h \\
	&\ + \int_{\Ga_h[\bfx]} \big( \D(u_h) - \D(u_h^*) \big) \, \mat_h (\nb_{\Ga_h[\bfx]} w_h) \cdot \nb_{\Ga_h[\bfx]} z_h \\
	&\ + \int_{\Ga_h[\bfx]} \big( \D(u_h) - \D(u_h^*) \big) \, \nb_{\Ga_h[\bfx]} w_h \cdot \mat_h(\nb_{\Ga_h[\bfx]} z_h) \\
	&\ + \int_{\Ga_h[\bfx]} \big( \D(u_h) - \D(u_h^*) \big) \, \nb_{\Ga_h[\bfx]} w_h \cdot \nb_{\Ga_h[\bfx]} z_h \big( \nb_{\Ga_h[\bfx]} \cdot v_h \big) \\
	=: &\ J_1 + J_2 + J_3 + J_4 .
\end{aligned}
\end{equation}
Here $v_h$ is the velocity of the discrete surface $\Ga_h[\bfx]$ with nodal values $\bfv$.

We now estimate the four terms separately. 
For the first term in $J_1$, we compute
\begin{align*}
	\big| \mat_h \big( \D(u_h) - \D(u_h^*) \big) \big|
	= &\ \big| \D'(u_h)  (\mat_h u_h - \mat_h u_h^*)  + ( \D'(u_h) - \D'(u_h^*) ) \mat_h u_h^* \big| \\
	\leq &\ |\D'(u_h)| |\mat_h e_u| + |\D'(u_h) - \D'(u_h^*)| |\mat_h u_h^*| .
\end{align*}
The local Lipschitz continuity of $\D'$ and \eqref{eq:boundedness of mat_h u_h^*}, together with a H\"older inequality then yields
\begin{equation*}
	J_1 
	\leq c \big( \|\mat_h e_u\|_{L^2(\Ga_h[\bfx])} + \|e_u\|_{L^2(\Ga_h[\bfx])} \big) \|\nb_{\Ga_h[\bfx]} w_h\|_{L^2(\Ga_h[\bfx])} \|z_h\|_{W^{1,\infty}(\Ga_h[\bfx])} .
\end{equation*}

The two middle terms are estimated by first interchanging $\mat_h$ and $\nb_{\Ga_h[\bfx]}$ via the formula \eqref{eq:mat-grad formula}.
Using \eqref{eq:mat-grad formula} together with $\mat_h w_h = \mat_h z_h = 0$ and the boundedness of $\nu_{\Ga_h[\bfx]}$ and $v_h$ \eqref{eq:W^{1,infty} bounds for numerical solutions}, we obtain the estimate
\begin{align*}
	J_2 + J_3 
	\leq &\ c \|e_u\|_{L^2(\Ga_h[\bfx])} \|\nb_{\Ga_h[\bfx]} w_h\|_{L^2(\Ga_h[\bfx])} \|z_h\|_{W^{1,\infty}(\Ga_h[\bfx])} .
\end{align*}

The last term $J_4$ is estimated by a similar argument as the first, now using the local Lipschitz continuity of $\D'$ and the $h$-uniform $W^{1,\infty}$ boundedness of $v_h$ \eqref{eq:W^{1,infty} bounds for numerical solutions}:
\begin{equation*}
	J_4 
	\leq c \|e_u\|_{L^2(\Ga_h[\bfx])} \|\nb_{\Ga_h[\bfx]} w_h\|_{L^2(\Ga_h[\bfx])} \|z_h\|_{W^{1,\infty}(\Ga_h[\bfx])} .
\end{equation*}

By combining these bounds, and recalling the $W^{1,\infty}$ norm bounds \eqref{eq:assumed bounds} and \eqref{eq:W^{1,infty} bounds for numerical solutions}, we obtain
\begin{align*}
	&\ \eu^T \diff \big(\bfA(\bfx,\bfu) - \bfA(\bfx,\us)\big) \eu \\
	\leq &\ c \big( \|\mat_h e_u\|_{L^2(\Ga_h[\bfx])} + \|e_u\|_{L^2(\Ga_h[\bfx])} \big) \|\nb_{\Ga_h[\bfx]} e_u\|_{L^2(\Ga_h[\bfx])} \bbk \| e_u \|_{W^{1,\infty}(\Ga_h[\bfx])} \ebk \\
	\leq &\ c \big( \|\doteu\|_{\bfM} + \|\eu\|_{\bfM} \big) \|\eu\|_{\bfA} .
\end{align*}

Altogether we obtain
\begin{align*}
	- \doteu^T \big(\bfA(\bfx,\bfu) - \bfA(\bfx,\us)\big) \eu 
	\leq &\ - \half \diff \Big( \eu^T \big(\bfA(\bfx,\bfu) - \bfA(\bfx,\us)\big) \eu \Big) \\
	&\ + c \big( \|\doteu\|_{\bfM} + \|\eu\|_{\bfM} \big) \|\eu\|_{\bfA} ,
\end{align*}
which does not contain a critical term $\|\doteu\|_{\bfA}$.

(v) Almost verbatim as the argument in (iv) we rewrite and estimate the third term on the right-hand side of \eqref{eq:error equation B - tested} as
\begin{align*}
	&\ - \doteu^T \big(\bfA(\bfx,\bfu) - \bfA(\bfx,\us)\big) \us \\
	= &\ - \diff \Big( \eu^T \big(\bfA(\bfx,\bfu) - \bfA(\bfx,\us)\big) \us \Big) \\
	&\ + \eu^T \diff \Big( \bfA(\bfx,\bfu) - \bfA(\bfx,\us) \Big) \us  + \eu^T \big(\bfA(\bfx,\bfu) - \bfA(\bfx,\us)\big) \dotus \\
	\leq &\ - \diff \Big( \eu^T \big(\bfA(\bfx,\bfu) - \bfA(\bfx,\us)\big) \us \Big) \\
	&\ + c \big( \|\doteu\|_{\bfM} + \|\eu\|_{\bfM} \big) \|\eu\|_{\bfA} + c \|\eu\|_{\bfM} \|\eu\|_{\bfA} .
\end{align*}
For the non-differentiated term here we used Lemma~\ref{lemma:solution-dependent stiffness matrix diff} (ii) (together with \eqref{eq:W^{1,infty} bounds for exact solutions} and \eqref{eq:W^{1,infty} bounds for numerical solutions}) and the $W^{1,\infty}$ variant of \eqref{eq:boundedness of mat_h u_h^*}. The latter is shown (omitting the argument $t$) by
\begin{equation}
\label{eq:boundedness of mat_h u_h^* - grad}
	\begin{aligned}
		&\ \|\mat_h u_h^*\|_{W^{1,\infty}(\Ga_h^*)} \leq 
		c \| (\mat_h u_h^*)^\ell  \|_{W^{1,\infty}(\Ga[X])}
		\\
		& \leq c \| (\mat_h u_h^{*})^\ell  - I_h \mat u\|_{W^{1,\infty}(\Ga[X])} \\
		& \quad +
		c \| I_h  \mat u -  \mat u \|_{W^{1,\infty}(\Ga[X])}
		+ c \|   \mat u \|_{W^{1,\infty}(\Ga[X])} \\
		& \le
		\frac c h \| (\mat_h u_h^{*})^\ell  - I_h \mat u\|_{H^1(\Ga[X])} 
		\\ & \quad +
		c \| I_h  \mat u -  \mat u \|_{W^{1,\infty}(\Ga[X])}
		+ c \|    \mat u \|_{W^{1,\infty}(\Ga[X])}
		\\
		&\le  \frac c h \| (\mat_h u_h^{*})^\ell  -  \mat u\|_{H^1(\Ga[X])} +
		\frac c h \|  \mat u - I_h \mat u\|_{H^1(\Ga[X])} 
		\\ & \quad +
		c \| I_h  \mat u -  \mat u \|_{W^{1,\infty}(\Ga[X])}
		+ c \|   \mat u \|_{W^{1,\infty}(\Ga[X])}
		\\
		& \leq Ch^{k-1} + Ch^{k-1} + Ch^k +  C .
	\end{aligned}
\end{equation}
Here we subsequently used the norm equivalence for the lift operator (see \cite[(2.15)--(2.16)]{Demlow2009}) in the first inequality,
an inverse inequality \cite[Theorem~4.5.11]{BrennerScott} in the second inequality, and the known error bounds for interpolation (see \cite[Proposition~2]{Demlow2009}) 
and for the Ritz map $u_h^* = \Rh^u u$ (a direct modification of \cite[Theorem~3.1]{KPower_quasilinear}) in the last inequality.

(vi) The argument for the fourth term on the right-hand side of \eqref{eq:error equation B - tested} is slightly more complicated since it compares stiffness matrices on different surfaces. We will estimate this term by a $\D$-weighted extension of the argument of \cite[(A.iv)]{MCF}.

We start by rewriting this term as a total derivative
\begin{equation*}
	\begin{aligned}
		&\ - \doteu^T \big(\bfA(\bfx,\us) - \bfA(\xs,\us)\big) \us \\
		= &\ - \diff \Big( \eu^T \big(\bfA(\bfx,\us) - \bfA(\xs,\us)\big) \us \Big) \\
		&\ + \eu^T \diff \Big( \bfA(\bfx,\us) - \bfA(\xs,\us) \Big) \us  + \eu^T \big(\bfA(\bfx,\us) - \bfA(\xs,\us)\big) \dotus \\
		\leq &\ - \diff \Big( \eu^T \big(\bfA(\bfx,\us) - \bfA(\xs,\us)\big) \us \Big) \\
		&\ + \eu^T \diff \Big( \bfA(\bfx,\us) - \bfA(\xs,\us) \Big) \us + c \|\eu\|_{\bfA} \|\ex\|_{\bfA} ,
	\end{aligned}
\end{equation*}
where we now used Lemma~\ref{lemma:solution-dependent stiffness matrix diff} (i) (together with \eqref{eq:W^{1,infty} bounds for exact solutions} and \eqref{eq:W^{1,infty} bounds for numerical solutions}).

The remaining term is bounded similarly to \eqref{eq:stiffness matrix difference diff - c variant}. In the setting of Section~\ref{section:relating surfaces}, analogously to Lemma~\ref{results collection}, using Leibniz formula we obtain, for any vectors $\bfw, \bfz \in \R^N$, \bbk but for a fixed $\us \in \R^N$ in both matrices, \ebk 
\begin{equation}
\label{eq:stiffness matrix difference diff - x variant}
	\begin{aligned}
		& \bfw^T \Big(\diff \big( \bfA(\bfx,\us)-\bfA(\xs,\us) \big) \Big) \bfz \\
		& =\frac{\d}{\d t} \int_0^1 \int_{\Ga_h^\theta} \D(u_h^{*,\theta}) \, \nb_{\Ga_h^\theta} w_h^\theta \cdot (D_{\Ga_h^\theta} e_x^\theta) \nb_{\Ga_h^\theta}  z_h^\theta \ \d\theta  \\
		&= 
		\int_0^1 \int_{\Ga_h^\theta} \mat_{\Ga_h^\theta} \big( \D(u_h^{*,\theta}) \big) \nb_{\Ga_h^\theta} w_h^\theta \cdot (D_{\Ga_h^\theta} e_x^\theta)  \nb_{\Ga_h^\theta}  z_h^\theta \ \d\theta  \\
		&\quad +
		\int_0^1 \int_{\Ga_h^\theta} \D(u_h^{*,\theta}) \,\mat_{\Ga_h^\theta} \big(\nb_{\Ga_h^\theta} w_h^\theta \big) \cdot (D_{\Ga_h^\theta} e_x^\theta)  \nb_{\Ga_h^\theta}  z_h^\theta \ \d\theta  \\
		&\quad +
		\int_0^1 \int_{\Ga_h^\theta} \D(u_h^{*,\theta}) \,\nb_{\Ga_h^\theta} w_h^\theta \cdot \mat_{\Ga_h^\theta} \big( D_{\Ga_h^\theta} e_x^\theta \big) \nb_{\Ga_h^\theta}  z_h^\theta \ \d\theta  \\
		&\quad 
		+ \int_0^1 \int_{\Ga_h^\theta} \D(u_h^{*,\theta}) \,\nb_{\Ga_h^\theta} w_h^\theta \cdot (D_{\Ga_h^\theta} e_x^\theta) \mat_{\Ga_h^\theta} \big( \nb_{\Ga_h^\theta}  z_h^\theta \big) \ \d\theta  \\
		&\quad 
		+ \int_0^1 \int_{\Ga_h^\theta} \D(u_h^{*,\theta}) \,\nb_{\Ga_h^\theta} w_h^\theta \cdot (D_{\Ga_h^\theta} e_x^\theta) \nb_{\Ga_h^\theta}  z_h^\theta \, (\nb_{\Ga_h^\theta} \cdot  v_{\Ga_h^\theta}) \d\theta \\
		&=: \int_0^1 \big( J_0^\theta + J_1^\theta + J_2^\theta + J_3^\theta + J_4^\theta \big) \,\d\theta .
	\end{aligned}
\end{equation}
We recall from Section~\ref{section:relating surfaces} that
$\Ga_h^\theta(t)$ is the discrete surface with nodes $\xs(t)+\theta \ex(t)$ (with unit normal field $\nu_h^\theta := \nu_{\Ga_h^\theta}$), and with finite element space $S_h[\xs(t)+\theta \ex(t)]$. 
The function $u_h^{*,\theta}=u_h^{*,\theta}(\cdot,t) \in S_h[\xs(t)+\theta \ex(t)]$ with $\theta$-independent nodal values $\us\t$.
\bbk \ebk 
We denote by $w_h^\theta(\cdot,t)$ and $z_h^\theta(\cdot,t)$ the finite element functions in $S_h[\xs(t)+\theta \ex(t)]$ with the time- and $\theta$-independent nodal vectors $\bfw$ and $\bfz$, respectively.
The velocity of $\Ga_h^\theta(t)$ is $v_{\Ga_h^\theta}(\cdot,t)$ (as a function of $t$), which is the finite element function in $S_h[\xs(t)+\theta \ex(t)]$ with nodal vector $\dotxs(t) + \theta \dot\bfe_\bfx(t) = \vs(t) + \theta \ev(t)$. Related to this velocity, $\mat_{\Ga_h^\theta}$ denotes the corresponding material derivative on ${\Ga_h^\theta}$. We thus have
\begin{equation}
\label{vhtheta}
	v_{\Ga_h^\theta} = v_h^{*,\theta} + \theta e_v^\theta .
\end{equation} 

The various time- and $\theta$-independencies imply
\begin{equation}
\label{eq:mat derivatives of things - zeros}
	\begin{gathered}
		\mat_{\Ga_h^\theta} w_h^\theta = 0, \qquad \mat_{\Ga_h^\theta} z_h^\theta = 0 , 
	\end{gathered}
\end{equation}
and since for the nodal vectors we have $\dotex = \ev$ \eqref{eq:error equations - x} we also have
\begin{equation}
\label{eq:mat derivative of e_x}
	\mat_{\Ga_h^\theta} e_x^\theta = e_v^\theta .
\end{equation}

\bbk 
The terms $J_k^\theta$ for $k = 0,\dotsc,4$ are bounded almost exactly as the analogous terms in \cite[(A.iv)]{MCF}.

For the first term we have $\mat_{\Ga_h^\theta} \big( \D(u_h^{*,\theta}) \big) = \D'(u_h^{*,\theta}) \, \mat_{\Ga_h^\theta} u_h^{*,\theta}$, this together with \eqref{eq:boundedness of mat_h u_h^*} and recalling that $u_h^{*,\theta}$ is $\theta$-independent yields $\|\D'(u_h^{*,\theta}) \, \mat_{\Ga_h^\theta} u_h^{*,\theta}\|_{L^\infty(\Ga_h^\theta)} \leq R^2$, cf.~the proof of Lemma~\ref{lemma:solution-dependent stiffness matrix time difference}. We then obtain bound
\begin{equation*}
	J_0^\theta \leq c \|\nb_{\Ga_h^\theta} w_h\|_{L^2(\Ga_h^\theta)} \|\nb_{\Ga_h^\theta} e_x^\theta\|_{L^2(\Ga_h^\theta)} \|z_h^\theta\|_{W^{1,\infty}(\Ga_h^\theta)} .
\end{equation*}
\ebk 

The identities in \eqref{eq:mat derivatives of things - zeros} in combination with the interchange formula \eqref{eq:mat-grad formula} yield
\begin{equation*}
	J_1^\theta + J_3^\theta \leq c \|\nb_{\Ga_h^\theta} w_h\|_{L^2(\Ga_h^\theta)} \|\nb_{\Ga_h^\theta} e_x^\theta\|_{L^2(\Ga_h^\theta)} \|z_h^\theta\|_{W^{1,\infty}(\Ga_h^\theta)} ,
\end{equation*}
where we have used the uniform boundedness of $K$ \eqref{eq:assumptions D}.

The interchange formula for $\mat_{\Ga_h^\theta}$ and $D_{\Ga_h^\theta}$, cf.~\cite[equation~(7.27)]{MCF}, analogous to \eqref{eq:mat-grad formula}, and reads
\begin{equation}
\label{eq:mat-D formula}
	\begin{aligned}
		\mat_{\Gamma_h^\theta} (D_{\Ga_h^\theta} e_x^\theta) = &\ \mat_{\Gamma_h^\theta} \Big( \textnormal{tr}(\nb_{\Ga_h^\theta} e_x^\theta) - \big( \nb_{\Ga_h^\theta} e_x^\theta+(\nb_{\Ga_h^\theta} e_x^\theta)^T \big) \Big) \\
		=&\ D_{\Ga_h^\theta} (\mat_{\Gamma_h^\theta} e_x^\theta) 
		+ \textnormal{tr}(\bar E^\theta) - (\bar E^\theta+(\bar E^\theta)^T) ,
	\end{aligned}
\end{equation}
with $\bar E^\theta= - \bigl(\nabla_{\Gamma_h^\theta} v_{\Ga_h^\theta} -	\nu_h^\theta(\nu_h^\theta)^T(\nabla_{\Gamma_h^\theta } v_{\Ga_h^\theta} )^T \bigr) \nabla_{\Gamma_h^\theta} e_x^\theta$, as follows from \cite[Lemma~2.6]{DziukKronerMuller} and the definition of the first order linear differential operator $D_{\Gamma_h^\theta}$.

The interchange identity \eqref{eq:mat-D formula} and \eqref{eq:mat derivative of e_x} (together with \eqref{eq:assumptions D}) then yields
\begin{equation*}
	J_2^\theta \leq c \|\nb_{\Ga_h^\theta} w_h^\theta\|_{L^2(\Ga_h^\theta)} \big( \|\nb_{\Ga_h^\theta} e_x^\theta\|_{L^2(\Ga_h^\theta)} + \|\nb_{\Ga_h^\theta} e_v^\theta\|_{L^2(\Ga_h^\theta)} \big) \|z_h^\theta\|_{W^{1,\infty}(\Ga_h^\theta)} .
\end{equation*}

The last term is directly bounded, using the $W^{1,\infty}$ boundedness of $v_h$, as
\begin{equation*}
	J_4^\theta \leq c \|\nb_{\Ga_h^\theta} w_h\|_{L^2(\Ga_h^\theta)} \|\nb_{\Ga_h^\theta} e_x^\theta\|_{L^2(\Ga_h^\theta)} \|z_h^\theta\|_{W^{1,\infty}(\Ga_h^\theta)} .
\end{equation*}

Using the norm equivalences \eqref{eq:lemma - theta-independence} for the bounds of $(J_k^\theta)_{k=0}^4$ we obtain
\begin{align*}
	& \bfw^T \Big(\diff \big( \bfA(\bfx,\us)-\bfA(\xs,\us) \big) \Big) \bfz \\
	\leq &\ c \|\nb_{\Ga_h[\bfx]} w_h\|_{L^2(\Ga_h[\bfx])} \big( \|\nb_{\Ga_h[\bfx]} e_x\|_{L^2(\Ga_h[\bfx])} + \|\nb_{\Ga_h[\bfx]} e_v\|_{L^2(\Ga_h[\bfx])} \big) \|z_h\|_{W^{1,\infty}(\Ga_h[\bfx])} .
\end{align*} 

Altogether, using the bound \eqref{eq:boundedness of mat_h u_h^* - grad}, we obtain
\begin{equation*}
	\begin{aligned}
		- \doteu^T \big(\bfA(\bfx,\us) - \bfA(\xs,\us)\big) \us 
		\leq &\ - \diff \Big( \eu^T \big(\bfA(\bfx,\us) - \bfA(\xs,\us)\big) \us \Big) \\
		&\ + c \|\eu\|_{\bfA} \big( \|\ex\|_{\bfA} + \|\ev\|_{\bfA} \big) + c \|\eu\|_{\bfA} \|\ex\|_{\bfA} .
	\end{aligned}
\end{equation*}

(vii) Finally, the defect terms are bounded by
\begin{equation*}
	- \doteu^T \bfM(\xs) \du \leq c \| \doteu \|_{\bfM} \|\du\|_{\bfM^*} . 
\end{equation*}

Altogether, collecting the above estimates in (i)--(vii), and using Young's inequality and absorptions to the left-hand side, we obtain the desired  estimate (recalling $\bfK = \bfM + \bfA$). 
\qed
\end{proof}

\subsection{\bf {Proof of \eqref{eq:velocity error estimate}}}
\begin{proof}
The error equation for the velocity law is estimated exactly as the velocity error equation for Willmore flow \cite[(B)]{Willmore} (based on Lemma~5.3 therein), and recalling $\bfu = ( \bfn , \bfV )^T \in \R^{4N}$, we obtain
\begin{equation}
\label{eq:velocity error estimate - in proof}
	\begin{aligned}
	\|\ev\|_{\bfK} \leq &\ c \big( \|\bfe_\bfV\|_{\bfK} + \|\bfe_\bfn\|_{\bfK} \big) + \|\dv\|_{\bfK} \\
	\leq &\ c \|\eu\|_{\bfK} + c \|\dv\|_{\bfK^*} ,
	\end{aligned}
\end{equation}
where for the last inequality we used a norm equivalence \eqref{norm equivalence}.
\qed
\end{proof}

\section{\bf Proof of Proposition \ref {proposition:stability - semi-discrete} }
\label{section:proof - stability}

\begin{proof}
The aim is to to combine the three  estimates of Lemma~\ref{lemma:three estimates}, following the main ideas of \cite{MCF}. 

First we use the differential equation $\dotex = \ev$ \eqref{eq:error equations - x}, using \eqref{matrix derivative bounds}, to show the bound
\begin{equation}
\label{eq:position error estimate}
	\begin{aligned}
		\|\ex\t\|_{\bfK(\xs\t)}^2 = &\ \int_0^t \frac{\d}{\d s} \|\ex\s\|_{\bfK(\xs\s)}^2 \d s \\
		\leq &\ c \int_0^t \|\ev\s\|_{\bfK(\xs\s)}^2 \d s + c \int_0^t \|\ex\s\|_{\bfK(\xs\s)}^2 \d s  .
	\end{aligned}
\end{equation}

 We first take the weighted linear combination of \eqref{eq:A - combined estimate} and \eqref{eq:B - combined estimate} with weights $1$ and $8 c_1$, respectively, to absorb the term $c_1 \|\doteu\|_{\bfM}^2$ from \eqref{eq:A - combined estimate}. We then obtain
\begin{equation}
\label{eq:combined estimates - pre int}
	\begin{aligned}
		&\ \frac{c_0}{2} \|\dotew\|_{\bfM}^2 + \frac{1}{2} \diff \| \ew \|_{\bfA}^2 
		+ c_1 \, \|\doteu\|_{\bfM}^2 + 4 c_1 \, \diff \| \eu \|_{\bfA(\bfx,\us)}^2 \\
		\leq &\
		c\, \big( \| \ex \|_{\bfK}^2 + \| \ev \|_{\bfK}^2 + \| \ew \|_{\bfK}^2 + \| \eu \|_{\bfK}^2 \big) \\
		&\ + c \, \big( \|\du\|_{\bfM^*}^2 + \|\dw\|_{\bfM^*}^2 \big) \\
		&\ - \diff \Big( \ew^T \big( \bfA(\bfx) - \bfA(\xs) \big) \ws \Big) \\
		&\ - c \half \diff \Big( \eu^T \big(\bfA(\bfx,\bfu) - \bfA(\bfx,\us)\big) \eu \Big) \\
		&\ - c \diff \Big( \eu^T \big(\bfA(\bfx,\bfu) - \bfA(\bfx,\us)\big) \us \Big) \\
		&\ - c \diff \Big( \eu^T \big(\bfA(\bfx,\us) - \bfA(\xs,\us)\big) \us \Big) .
	\end{aligned}
\end{equation}

We now connect $\|\dot\bfe\|_{\bfM}^2$ with $\d / \d t \, \|\bfe\|_{\bfM}^2$, \blueon and will use the result either for $\ex$ or for $\eu$ in place of $\bfe$. \blueoff We estimate, using \eqref{matrix derivative bounds}, by
\begin{equation}
\label{eq:dote - e estimate}
	\begin{aligned}
		\diff \|\bfe\|_{\bfM}^2 = &\ 2 \bfe^T \bfM(\bfx) \dot\bfe + \bfe^T \diff \big(\bfM(\bfx)\big) \bfe^T \\
		\leq &\ c \|\dot\bfe\|_{\bfM} \|\bfe\|_{\bfM} + c \|\bfe\|_{\bfM}^2 \\
		\leq &\ \varrho \|\dot\bfe\|_{\bfM}^2 + \varrho^{-1} c \|\bfe\|_{\bfM}^2 ,
	\end{aligned}
\end{equation}
where for the last step we used Young's inequality with an arbitrary number $\varrho > 0$, to be chosen later on independently of $h$. We will also use its time-integrated version:
\begin{equation}
\label{eq:dote - e estimate - time-int}
	\|\bfe\t\|_{\bfM\t}^2 \leq \varrho \int_0^t \|\dot\bfe\s\|_{\bfM\s}^2 \d s + \varrho^{-1} c\int_0^t  \|\bfe\s\|_{\bfM\s}^2 \d s + \|\bfe(0)\|_{\bfM(0)}^2 .
\end{equation}

Using \eqref{eq:dote - e estimate} for $\bfe = \ew$ with $\varrho = 1$, the left-hand side of \eqref{eq:combined estimates - pre int} simplifies to 
\begin{equation*}
	\frac{c_0}{2} \diff \|\ew\|_{\bfM}^2 + \frac{1}{2} \diff \| \ew \|_{\bfA}^2 
	+ c_1 \, \|\doteu\|_{\bfM}^2 + 4 c_1 \, \diff \| \eu \|_{\bfA(\bfx,\us)}^2 ,
\end{equation*}
with additional terms on the right-hand side which already appeared before.

In order to estimate the remaining time derivatives on the left-hand side of \eqref{eq:combined estimates - pre int} we integrate both sides form $0$ to $t$ and use the norm equivalence \eqref{eq:Axu norm equivalence}, recalling $\bfK = \bfM + \bfA$ we obtain (after factoring out a constant and dividing through):
\begin{equation}
\label{eq:combined estimates - pre tricks}
	\begin{aligned}
		&\ \|\ew\t\|_{\bfK\t}^2 + c_1 \int_0^t \|\doteu\s\|_{\bfM\s}^2 \d s + \| \eu\t \|_{\bfA\t}^2 \\
		\leq &\ c \int_0^t \big( \| \ex\s \|_{\bfK\s}^2 + \| \ev\s \|_{\bfK\s}^2 + \| \ew\s \|_{\bfK\s}^2 + \| \eu\s \|_{\bfK\s}^2 \big) \d s \\
		&\ + c \int_0^t \big( \|\dw\s\|_{\bfM^*\s}^2 + \|\du\s\|_{\bfM^*\s}^2 \big) \d s \\
		&\ + \|\ew(0)\|_{\bfK(0)}^2 + \|\eu(0)\|_{\bfM(0)}^2 + \| \eu(0) \|_{\bfA(\bfx(0),\us(0))}^2 \\
		&\ + c \, \| \ew\t \|_{\bfK\t} \| \ex\t \|_{\bfK\t} \\
		&\ + c \, \| \eu\t \|_{\bfA\t} \| \eu\t \|_{\bfM\t} + c \, \| \eu(0) \|_{\bfA(0)} \| \eu(0) \|_{\bfM(0)} \\
		&\ + c \, \| \eu\t \|_{\bfK\t} \| \ex\t \|_{\bfK\t} ,
	\end{aligned}
\end{equation}
where for the four time-differentiated terms on the right-hand side (cf.~\eqref{eq:combined estimates - pre int}), we used, in order, \eqref{matrix difference bounds e_x}, Lemma~\ref{lemma:solution-dependent stiffness matrix diff} (ii) twice, and Lemma~\ref{lemma:solution-dependent stiffness matrix diff} (i), in combination with the uniform $W^{1,\infty}$ norm bounds \eqref{eq:assumed bounds} and \eqref{eq:W^{1,infty} bounds for exact solutions}, \eqref{eq:W^{1,infty} bounds for numerical solutions}.

Using Young's inequality and absorptions to the left-hand side yields (after factoring out a constant and dividing through)
\begin{equation}
\label{eq:combined estimates - pre Gronwall}
	\begin{aligned}
		&\ \|\ew\t\|_{\bfK\t}^2 + c_1 \int_0^t \|\doteu\s\|_{\bfM\s}^2 \d s + \| \eu\t \|_{\bfA\t}^2  \\
		\leq &\ c \int_0^t \big( \| \ex\s \|_{\bfK\s}^2 + \| \ev\s \|_{\bfK\s}^2 + \| \ew\s \|_{\bfK\s}^2 + \| \eu\s \|_{\bfK\s}^2 \big) \d s \\
		&\ + c \int_0^t \big( \|\dw\s\|_{\bfM^*\s}^2 + \|\du\s\|_{\bfM^*\s}^2 \big) \d s \\
		&\ + c \big( \|\eu(0)\|_{\bfK(0)}^2 + \|\eu(0)\|_{\bfK(0)}^2 \big) \\
		&\ + c \, \| \ex\t \|_{\bfK\t}^2 
		+ c_2 \, \| \eu\t \|_{\bfM\t}^2 .
	\end{aligned}
\end{equation}

In order to apply Gronwall's inequality we need to estimate the last two terms of \eqref{eq:combined estimates - pre Gronwall}: The first of which is bounded by \eqref{eq:position error estimate}. The second is estimated using \eqref{eq:dote - e estimate - time-int} for $\bfe = \eu$ with a factor $\varrho > 0$ such that $c_2 \varrho < c_1 / 2$, allowing an absorption to the left-hand side. This, and using \eqref{eq:dote - e estimate - time-int} for $\bfe = \eu$ with a factor $\varrho = 1$ now on the left-hand side, yields 
\begin{equation}
\label{eq:combined estimates - pre Gronwall - 2}
\begin{aligned}
	&\ \|\ew\t\|_{\bfK\t}^2 + \|\eu\t\|_{\bfK\t}^2  \\
	\leq &\ c \int_0^t \big( \| \ex\s \|_{\bfK\s}^2 + \| \ev\s \|_{\bfK\s}^2 + \| \ew\s \|_{\bfK\s}^2 + \| \eu\s \|_{\bfK\s}^2 \big) \d s \\
	&\ + c \int_0^t \big( \|\dw\s\|_{\bfM^*\s}^2 + \|\du\s\|_{\bfM^*\s}^2 \big) \d s \\
	&\ + c \big( \|\ew(0)\|_{\bfK(0)}^2 + \|\eu(0)\|_{\bfK(0)}^2 + \| \eu(0) \|_{\bfA(\bfx(0),\us(0))}^2 \big).
\end{aligned}
\end{equation}

Substituting \eqref{eq:combined estimates - pre Gronwall - 2} into the right-hand side of \eqref{eq:velocity error estimate}, and then summing up the resulting estimate with \eqref{eq:combined estimates - pre Gronwall - 2} with \eqref{eq:position error estimate} (analogously as in \cite[(C)]{MCF}), and using the norm equivalence \eqref{norm equivalence}, and then finally using Gronwall's inequality we obtain the stated stability estimate \eqref{eq:stability estimate} for all $0 \leq t \leq T^*$.

Finally, it remains to show that in fact $T^\ast = T$ for $h$ sufficiently small, i.e.~that Lemma~\ref{lemma:assumed bounds} holds for thw whole time interval $[0,T]$. 
Upon noting that by the assumed defect bounds \eqref{eq:assumed defect bounds} and \eqref{eq:assumed initial error bounds}, combined with the obtained stability bound \eqref{eq:stability estimate}, yields
\begin{equation*}
\| \ex(t) \|_{\bfK(\xs\t)}
+ \| \ev(t) \|_{\bfK(\xs\t)}
+ \| \ew(t) \|_{\bfK(\xs\t)}
+ \| \eu(t) \|_{\bfK(\xs\t)}
\leq C h^\kappa , 
\end{equation*}
and therefore by an inverse inequality \cite[Theorem~4.5.11]{BrennerScott} we obtain, for $t \in [0,T^*]$,
\begin{equation}
\label{eq:showing assumed bound beyond t*}
\begin{aligned}
&\ \| e_x(\cdot,t) \|_{W^{1,\infty}(\Ga_h[\xs\t])} 
+ \| e_v(\cdot,t) \|_{W^{1,\infty}(\Ga_h[\xs\t])} \\
&\ \qquad 
+ \| e_w(\cdot,t) \|_{W^{1,\infty}(\Ga_h[\xs\t])} 
+ \| e_u(\cdot,t) \|_{W^{1,\infty}(\Ga_h[\xs\t])} \\
&\ \ \leq \frac{c}{h} \Big( \| \ex(t) \|_{\bfK(\xs\t)}
+\| \ev(t) \|_{\bfK(\xs\t)}
+\| \ew(t) \|_{\bfK(\xs\t)} 
+\| \eu(t) \|_{\bfK(\xs\t)} \Big) \\
&\ \ \leq c \, C \, h^{\kappa-1} \leq \frac{1}{2} h^{(\kappa-1)/2} ,
\end{aligned}
\end{equation}
for sufficiently small $h$. This means that the bounds \eqref{eq:assumed bounds}, (and hence all other estimates in Lemma~\ref{lemma:assumed bounds}), can be extended beyond $T^*$, contradicting the maximality of $T^\ast$, unless $T^\ast = T$ already. Therefore we have show the stability bound \eqref{eq:stability estimate} over the whole time interval $[0,T]$.
\qed
\end{proof}

\section{Numerical experiments}
\label{section:numerics}

\blueon We performed  numerical simulations and experiments  for the flow \eqref{eq:MCFdiff} formulated as Problem~\ref{P1} in which: \blueoff 
\begin{itemize}
	\item[-] \blueon The rate of convergence in an example involving  a radially symmetric exact solution is studied in order to illustrate the  theoretical results of Theorem~\ref{theorem:semi-discrete error estimates}.\blueoff 
	\item[-]  \blueon Flows decreasing the energy $\calE(\Ga,u) = \int_\Ga G(u)$ and their qualitative properties, see Section~\ref{section:examples and properties},  are investigated in several simulations.  \blueoff 
	\item[-] \blueon Numerical experiments exhibiting loss of convexity and self-intersection are presented. This is in contrast to mean curvature flow which \emph{preserves convexity}, and  for which \emph{self-intersections} are not possible. 
	\blueoff 
	\item[-]  \blueon The preservation under discretisation of  mass conservation and  the existence of a weak maximum principle together with the  energy decay, and mean convexity properties enjoyed by the underlying PDE system are studied.  \blueoff 
\end{itemize}

The numerical experiments use quadratic evolving surface finite elements.  Quadratures of sufficiently high order  are employed  to compute the finite element vectors and matrices so that the resulting quadrature error does not feature in the discussion of the accuracies of the schemes. Similarly, sufficiently high-order linearly implicit BDF time discretisations, see the next Section~\ref{eq:BDF},  with small time steps were employed for the solution of the time dependent ODE system.
The parametrisation of the quadratic elements was inspired by \cite{BCH2006}. 
The initial meshes were all generated using DistMesh \cite{distmesh}, without taking advantage of any symmetry of the surface.
\subsection{\bf Linearly implicit backward difference full discretisation}
\label{section:BDF}

For the time discretisation of the system of ordinary differential equations \eqref{eq:matrix-vector form - P1} we use a $q$-step linearly implicit backward difference formula (BDF method). For a step size $\tau>0$, and with $t_n = n \tau \leq T$, we determine the approximations to all variables $\bfx^n$ to $\bfx(t_n)$, $\bfv^n$ to $\bfv(t_n)$, $\bfw^n = (\bfn^n,\bfV^n)^T$ to $\bfw(t_n) = (\bfn(t_n),\bfV(t_n))^T$, and $\bfu^n$ to $\bfu(t_n)$ by the fully discrete system of \emph{linear} equations, for $n \geq q$,
\begin{subequations}
	\label{eq:BDF}
	\begin{align}
	\label{eq:BDF -- v}
	\bfv^n =&\ \bfV^n \bullet \bfn^n , \\
	\label{eq:BDF -- w}
	\bfM^{[4]}(\widetilde \bfx^n,\widetilde \bfu^n;\widetilde \bfw^n) \dot{\bfw}^n + \bfA^{[4]}(\widetilde \bfx^n) \bfw^n =&\ \bff(\widetilde \bfx^n,\widetilde \bfw^n,\widetilde \bfu^n;\dot\bfu^n) , \\
	\label{eq:BDF -- u}
	\Big(\bfM(\widetilde \bfx^n) \bfu^n \Big)^{\bm \cdot} 
	+ \bfA(\widetilde \bfx,\widetilde \bfu^n) \bfu^n =&\ \bfzero , \\
	\label{eq:BDF -- x}
	\dot \bfx^n =&\ \bfv^n ,
	\end{align}
\end{subequations}
where we denote the discretised time derivatives
\begin{equation}
\label{eq:backward differences def}
	\dot \bfx^n = \frac{1}{\tau} \sum_{j=0}^q \delta_j \bfx^{n-j} , 
	\qquad n \geq q ,
\end{equation}
while by $\widetilde \bfx^n$ we denote the extrapolated values 
\begin{equation}
\label{eq:extrapolation def}
	\widetilde \bfx^n = \sum_{j=0}^{q-1} \gamma_j \bfx^{n-1-j} , 
	\qquad n \geq q .
\end{equation}
Both notations are used for all other variables, in particular note the BDF time derivative of the the product $(\bfM(\widetilde \bfx^n) \bfu^n)^{\bm \cdot}$.

The starting values $\bfx^i$ and $\bfu^i$ ($i=0,\dotsc,q-1$) are assumed to be given. Furthermore, we set $\widetilde \bfx^i = \bfx^i$ and $\widetilde \bfu^i = \bfu^i$ ($i=0,\dotsc,q-1$). The initial values can be precomputed using either a lower order method with smaller step sizes or an implicit Runge--Kutta method.

The method is determined by its coefficients, given by 
\begin{equation*}
	\delta(\zeta)=\sum_{j=0}^q \delta_j \zeta^j=\sum_{\ell=1}^q \frac{1}{\ell}(1-\zeta)^\ell 
	\andquad 
	\gamma(\zeta) = \sum_{j=0}^{q-1} \gamma_j \zeta^j = (1 - (1-\zeta)^q)/\zeta .
\end{equation*}
The classical BDF method is known to be zero-stable for $q\leq6$ and to have order $q$; see \cite[Chapter~V]{HairerWannerII}.
This order is retained, for $q \leq 5$ see \cite{LubichMansourVenkataraman_bdsurf}, also by the linearly implicit variant using the above coefficients $\gamma_j$; cf.~\cite{AkrivisLubich_quasilinBDF,AkrivisLiLubich_quasilinBDF}.
In \cite{Akrivisetal_BDF6}, the multiplier techniques of \cite{NevanlinnaOdeh} have been recently extended, via a new approach, to the six-step BDF method. 

We again point out that the fully discrete system \eqref{eq:BDF} is extremely similar to the fully discrete system for the mean curvature flow \cite[equations (5.1)--(5.4)]{MCF}, and Theorem~6.1 in \cite{MCF} proves optimal-order error bounds for the combined ESFEM--BDF full discretisation
of the mean curvature flow system, for finite elements of polynomial degree $k \geq 2$ and BDF methods of order $2 \leq q \leq 5$.

We note that in each time step the method decouples and hence only requires solving a few linear systems (with symmetric positive definite matrices): first \eqref{eq:BDF -- u} is solved with  $\delta_0 \bfM(\widetilde \bfx^n) + \tau\bfA(\widetilde \bfx^n,\widetilde \bfu^n)$, then, since $\dot\bfu^n$ is already known for $\bff$, \eqref{eq:BDF -- w} is solved with $\delta_0 \bfM(\widetilde \bfx^n,\widetilde \bfu^n,\widetilde \bfw^n) + \tau\bfA(\widetilde \bfx^n)$, and finally \eqref{eq:BDF -- x} with \eqref{eq:BDF -- v} is computed.

\subsection{\bf Convergence experiment}

\blueon We will construct a radially symmetric solution to Problem~\ref{P1} of the form $u(\cdot,t) \equiv u(t)$ on $\Ga[X(\cdot,t)]$, where the surface $\Ga[X(\cdot,t)] \subset \R^{m+1}$ is a sphere of radius $R\t$ with $R(0) = R_0$.
We choose $\D(u) \equiv 1$ and $F(u,H)=-g(u) H$, so that  $K(u,V)= - V / g(u)$. The positive function $g$ here will be chosen later on.

\blueon Since the flow preserves the radial symmetry of $\Ga[X]$, it remains a sphere of radius $R\t$, and  inspection of the diffusion equation  yields that $u(\cdot,t)$ remains spatially constant. For more details on this example we refer to \cite[Section~3.4]{diss_Buerger}.

The  velocity and mean curvature of the evolving sphere in $\mathbb R^m$ of radius $R(t)$  are  $V = \dot R$, and  $H = m/R\t$ so that  \begin{align*}
	v = &\ V \nu = - g(u) H \nu 
\end{align*} yields \begin{equation}
	\label{eq:ODE for R - pre}
	\dot R\t = - g(u\t) \frac{m}{R\t} .
\end{equation}
\blueoff 
On the other hand, by  mass conservation, see~Section~\ref{section:examples and properties}, we have
\begin{equation}
\label{eq:exact solution - u}
	u\t = u_0 \, \Big( \frac{R_0}{R\t} \Big)^m 
\end{equation}
and together with \eqref{eq:ODE for R - pre} this yields
\begin{equation}
\label{eq:ODE for R}
	\dot R\t = - g\bigg( u_0 \, \Big( \frac{R_0}{R\t} \Big)^m \bigg) \frac{m}{R\t} .
\end{equation}

\blueon We set, with $\alpha \in \R$ to be chosen later on,
\begin{align}
	\label{eq:def g - pol}
	g(r) 
	= &\ (1+\alpha) r^{-\alpha} ,
	\intertext{which corresponds to the energy density} 
	\label{eq:def G - pol}
	G(r) = &\ r^{-\alpha} ,
\end{align}
recalling that $g(r) = G(r) - G'(r)r$ for the gradient flow \eqref{eq:MCF diff - gradient flow}. With these coefficient functions this problem satisfies our assumptions from Section~\ref{section:weak formulation and properties}. \redoff 

By choosing \eqref{eq:def g - pol} in the ODE \eqref{eq:ODE for R}, we obtain
\begin{equation}
\label{eq:ODE for R - pol}
	\begin{aligned}
		\dot R\t = &\ - (1+\alpha)\bigg( u_0 \, \Big( \frac{R_0}{R\t} \Big)^m \bigg)^{-\alpha} \frac{m}{R\t} \\
		= &\ - m \, (1+\alpha) \Big( u_0 \, R_0^m \Big)^{-\alpha} R\t^{\alpha m - 1} \\ 
		= &\ - b \, R\t^{\alpha m - 1}  ,
	\end{aligned}
\end{equation}
where the constant $b$ collects all time-independent factors.
Note that for $\alpha = 0$ we recover the classical mean curvature flow ($b = m$).
The solution of the above separable ODE, with initial value $R(0) = R_0$, is 
\begin{equation}
\label{eq:exact solution - R}
	R(t) = \Big( R_0^{2 - \alpha m} - t b (2 - \alpha m) \Big)^{\frac{1}{2 - \alpha m}} ,
\end{equation}
on the time interval $[0,T_{\max}]$.
In the $m$-dimensional case, if $2 - \alpha m \geq 0$ the sphere $\Ga[X]$ shrinks to a point in finite time, $T_{\max} = R_0^{2 - \alpha m} \, (b (2 - \alpha m))^{-1}$, while if $2 - \alpha m < 0$ a solution exists for all times.

\medskip

For the convergence experiment we chose the following initial values and parameters: The initial surface $\Ga^0$ is a two-dimensional sphere of radius $R_0 = 1$, the initial concentration is $u^0(x,0) = 1$ for all $x \in \Ga^0$. The parameter $\alpha$ in \eqref{eq:def G - pol}--\eqref{eq:def g - pol} is chosen to be $\alpha = 2$. That is we are in a situation where a solution exists on $[0,\infty)$. The exact solutions for $\Ga[X]$ and $u(\cdot,t)$ are given in \eqref{eq:exact solution - R} and \eqref{eq:exact solution - u}, respectively.
We started the algorithms from the nodal interpolations of the exact initial values $\Ga[X(\cdot,t_i)]$, $\nu(\cdot,t_i)$, $V(\cdot,t_i) = -g(u(\cdot,t_i)) H(\cdot,t_i)$, and $u(\cdot,t_i)$, for $i=0,\dotsc,q-1$.
In order to illustrate the convergence results of Theorem~\ref{theorem:semi-discrete error estimates}, we have computed the errors between the numerical solution \eqref{eq:BDF} and (the nodal interpolation of the) exact solutions of Problem~\ref{P1} for the above radially symmetric geometric solution in dimension $m=2$.  
The solutions are plotted in Figure~\ref{fig:solutions_conv}.

In Figure~\ref{fig:conv_space} and \ref{fig:conv_time} we report the errors between the numerical solution and the interpolation of the exact solution until the final time $T=1$, for a sequence of meshes (see plots) and for a sequence of time steps $\tau_{k+1} = \tau_k / 2$. 
The logarithmic plots report on the $L^\infty(H^1)$ norm of the errors against the mesh width $h$ in Figure~\ref{fig:conv_space}, and against the time step size $\tau$ in Figure~\ref{fig:conv_time}.
The lines marked with different symbols and different colours correspond to different time step sizes and to different mesh refinements in Figure~\ref{fig:conv_space} and \ref{fig:conv_time}, respectively.

In Figure~\ref{fig:conv_space} we can observe two regions: a region where the spatial discretisation error dominates, matching the $O(h^2)$ order of convergence of Theorem~\ref{theorem:semi-discrete error estimates} (see the reference lines), and a region, with small mesh size, where the temporal discretisation error dominates (the error curves flatten out). For Figure~\ref{fig:conv_time}, the same description applies, but with reversed roles. Convergence of fully discrete methods is not shown, but $O(\tau^2)$ is expected for the 2-step BDF method, cf.~\cite{MCF}.

The convergence in time and in space as shown by Figures~\ref{fig:conv_time} and \ref{fig:conv_space}, respectively, is in agreement with the theoretical convergence results (note the reference lines).

\begin{figure}[htbp]
	\includegraphics[width=\textwidth,clip,trim={100 115 55 110}] {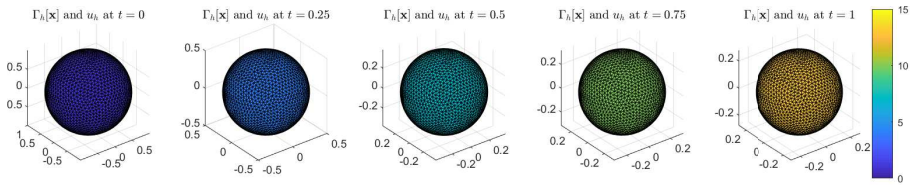}
	\caption{Solutions ($\Ga_h[\bfx]$ and $u_h$) of the radially symmetric flow with $\alpha = 2$ computed using BDF2 / quadratic ESFEM (colorbar applies to all plots).}
	\label{fig:solutions_conv}
\end{figure}

\begin{figure}[htbp]
	\includegraphics[width=\textwidth]{"figures/convplot_MCFdiff_g_T1_BDF2_space_Linfty"}
	\caption{Spatial convergence of the BDF2 / quadratic ESFEM discretisation for Problem~\ref{P1} with $T = 1$ and $\alpha = 2$.}
	\label{fig:conv_space}
\end{figure}

\begin{figure}[htbp]
	\includegraphics[width=\textwidth]{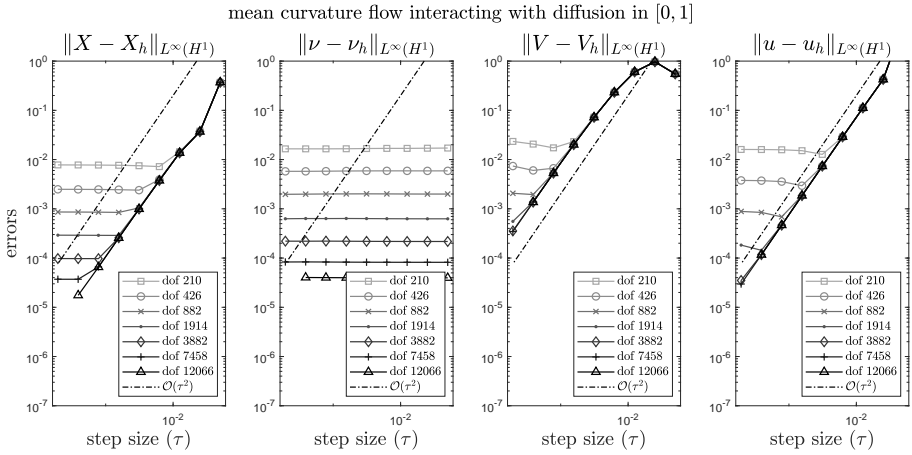}
	\caption{Temporal convergence of the BDF2 / quadratic ESFEM discretisation for Problem~\ref{P1} with $T = 1$ and $\alpha = 2$.}
	\label{fig:conv_time}
\end{figure}

\subsection{\bf Convexity/non-convexity along the flow}

It is well known that for mean curvature flow, \cite{Huisken1984},  a strictly convex surface shrink to a round point in finite time, and stays strictly convex throughout. For the flow \eqref{eq:MCFdiff} this is not true. In Figure~\ref{fig:solutions_elong_ellipsoid_long} and \ref{fig:conservation_elong_ellipsoid_long}, \blueon $F(u,H) = g(u)H$ and $\D(u) \equiv 1$ where $g(u) = G(u) - G'(u)u = 3 u^{-2}$, which corresponds to the energy density $G(u) = u^{-2}$. This problem satisfies our assumptions from Section~\ref{section:weak formulation and properties}. \blueoff We respectively report on the evolution of an elongated ellipsoid as initial surface with an initial value $u^0$ (concentrated along the tips with $\text{values} \approx 5$ decreasing in the middle to $0.5$), and on the mass conservation, weak maximum principle, energy decay, and the mean convexity along the flow. 
The plots of Figure~\ref{fig:solutions_elong_ellipsoid_long} show the flow until final time $T=7.5$, where a pinch singularity occurs. The simulations    \blueon used 
$\text{dof} =4002$ for the number of degrees of freedom and a time step  $\tau = 0.01$\blueoff, (the colorbar applies to all plots). We point out the crucial observation that the \emph{approaching} singularity is detectable on the mass conservation plot. 
Note that in Remark~\ref{remark:semi-discrete mass conservation} only the mass conservation of the spatial semi-discretisation was studied, but not that of the fully discrete numerical method \eqref{eq:BDF}.
For Figure~\ref{fig:conservation_elong_ellipsoid_long} (and the other experiments as well) the mass and energy were computed by quadratures.


\begin{figure}[htbp]
	\includegraphics[width=\textwidth,clip,trim={95 27 55 10}] {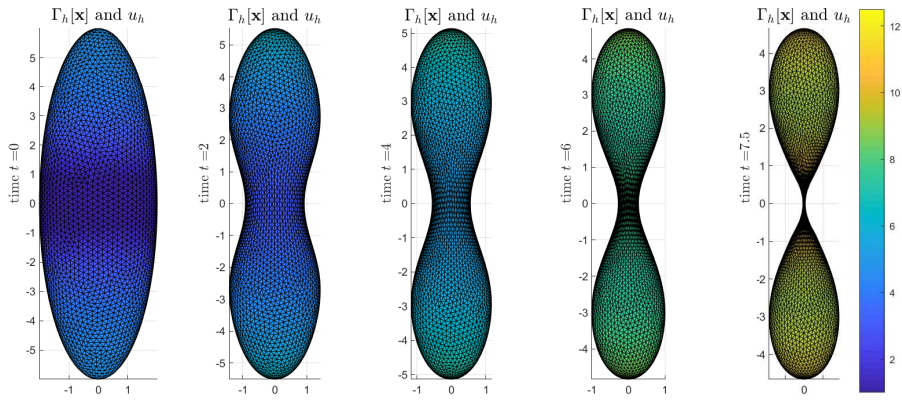}
	\caption{Unlike for mean curvature flow, convex surfaces do not remain convex along the flow \eqref{eq:MCFdiff} with $F(u,H) = g(u)H = 3 u^{-2} H$ (the colorbar applies to all plots).}
	\label{fig:solutions_elong_ellipsoid_long}
\end{figure}
\begin{figure}[htbp]
	\includegraphics[width=\textwidth]{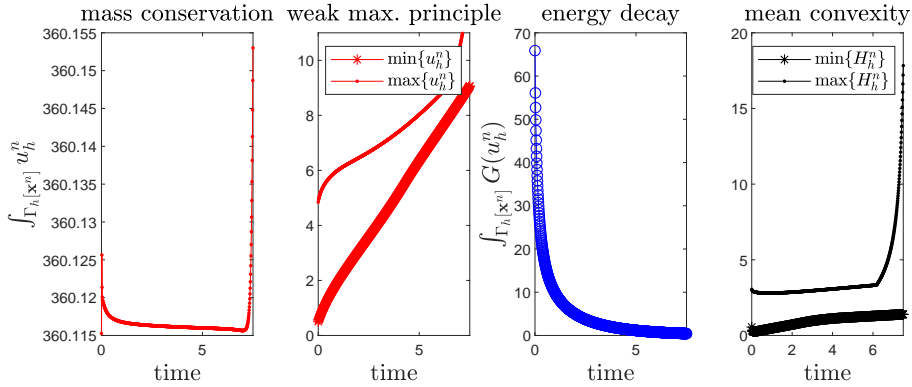}
	\caption{Mass conservation, weak maximum principle, energy decay, and mean convexity for the example in Figure~\ref{fig:solutions_elong_ellipsoid_long} along the flow \eqref{eq:MCFdiff} with $g(u) = 3 u^{-2}$.}
	\label{fig:conservation_elong_ellipsoid_long}
\end{figure}

\subsection{\bf Slow diffusion through a tight neck}

The diffusion speed on surfaces is greatly influenced by the geometry, see, e.g., the insightful paper of Ecker \cite{Ecker_DMV}. To report on such an experiment for the flow \eqref{mcfeqn}--\eqref{diffeqn}, as an initial surface we take a dumbbell (given by \cite[equation~(2.3)]{ElliottStyles_ALEnumerics}) and initial data $u^0$ which is $0.8$ from the neck above and smoothly transitioning to $10^{-4}$ for the neck and below.
\blueon The experiment takes $\D(u) \equiv 1$ and  $F(u,H) = uH$. 
\blueoff In view of these choices, and the initial data $u^0$, the bottom part barely moves, while the top part quickly shrinks before the concentration could pass through the neck. We note that for pure mean curvature flow of this initial surface a pinch-off singularity would occur in finite time, cf.~\cite[Figure~4]{MCF} which uses the exact same initial surface $\Ga^0$. \blueoff 

The experiment uses a mesh with $\text{dof} =10522$ and a time step size $\tau = 0.001$. 
In Figure~\ref{fig:solutions_dumbbell} we observe that, since the diffusion speed at the neck is rather slow, the concentration $u_h$ cannot equidistribute before the top part is vanishing. A pinch-off does not occur, contrary to standard mean curvature flow, see \cite[Section~13.2]{MCF} (using the same initial surface).
Similarly as before, Figure~\ref{fig:conservation_dumbbell} reports on the mass conservation, weak maximum principle, and the mean convexity along the flow, we note however that the initial surface is not mean convex: $\min\{H(\cdot,0)\} \approx -1.72 \cdot 10^{-12}$. Note the axis-limits in Figure~\ref{fig:conservation_dumbbell}.

\begin{figure}[htbp]
	\includegraphics[width=\textwidth,clip,trim={65 60 80 52}] {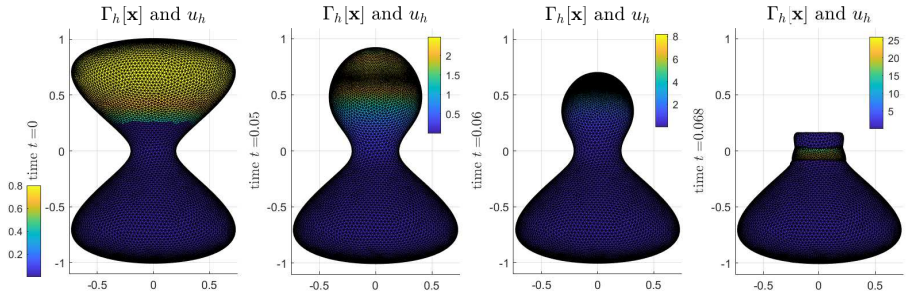}
	\caption{The diffusion speed is slow at a tight neck, and hence the large concentration differences cannot equilibrate before the top part shrinks along the flow \eqref{eq:MCFdiff} with $F(u,H) = uH$ and $\D(u) \equiv 1$.
	}
	\label{fig:solutions_dumbbell}
\end{figure}
\begin{figure}[htbp]
	\includegraphics[width=\textwidth,clip,trim={40 7 150 12}] {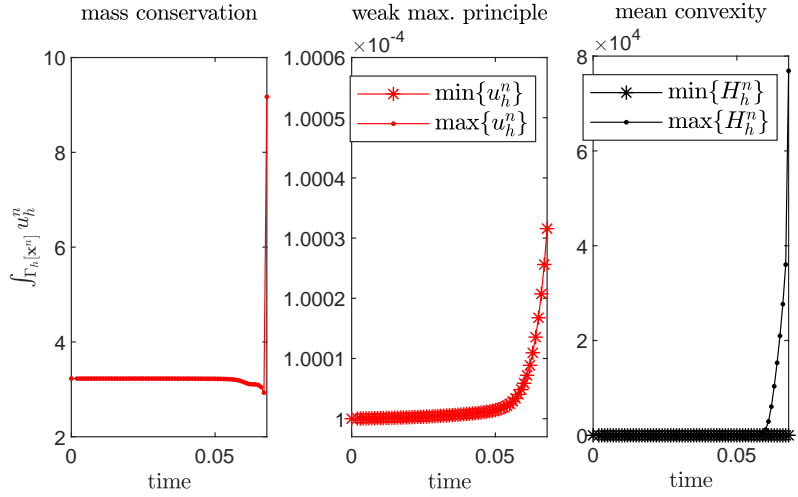}
	\caption{Mass conservation, weak maximum principle (note the axis-scaling), and mean convexity for the example in Figure~\ref{fig:solutions_dumbbell}, along the flow\eqref{eq:MCFdiff} with $F(u,H) = uH$ and $\D(u) \equiv 1$.}
	\label{fig:conservation_dumbbell}
\end{figure}

\subsection{\bf A self-intersecting flow}

The flow \eqref{eq:MCFdiff} may describe surface evolutions where $\Ga[X]$ is self intersecting, i.e.~$X(\cdot,t) \colon \Ga^0 \to \R^3$ is not a parametrisation, but an immersion, see the similar construction in \cite[Figures~5.3--5.5]{diss_Buerger}.
\blueon We consider the flow \eqref{eq:MCFdiff} with $F(u,H) = g(u)H$ and $\D(u) \equiv 1$ where $g(u) = G(u) - G'(u)u = 5 u^{-4}$, which corresponds to the energy density $G(u) = u^{-4}$. This problem satisfies our assumptions from Section~\ref{section:weak formulation and properties}. \blueoff

For a cup shaped surface\footnote{Generated in Blender: \texttt{blender.org}.} (with $\text{dof} =4002$) and a suitably chosen initial value self-intersections are possible. \blueon The initial datum is chosen such that $u^0$ is constant 10 over the whole surface $\Ga^0$, except on the outer-bottom where it is gradually decreased to a smaller $\text{value} \approx 1$ as shown in the leftmost plot in Figure~\ref{fig:solutions_cup_bottom}. \blueoff
In Figure~\ref{fig:solutions_cup_cut} and \ref{fig:solutions_cup_bottom} we present the numerical solution obtained by the 2-step BDF method with $\tau = 10^{-3}$. The self-intersection is clearly observable on both figures, e.g., note the bright patch in Figure~\ref{fig:solutions_cup_bottom} after the self intersection.
Of course the self-intersection does not influence the mass conservation, weak maximum principle, and energy decay, see Figure~\ref{fig:conservation_cup}. (The initial surface is clearly not convex.)

\begin{figure}[htbp]
	\includegraphics[width=\textwidth,clip,trim={82 40 75 30}] {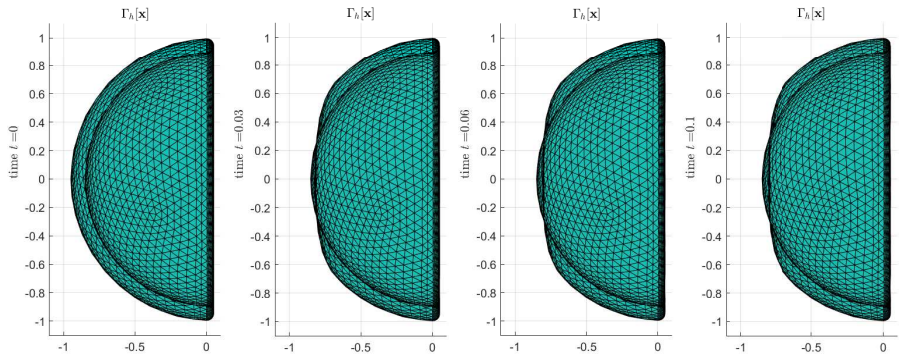}
	\caption{Snapshots (cross section at the $x = 0$ plane) of a self-intersecting evolution \eqref{eq:MCFdiff} with $F(u,H) = g(u) H = 5u^{-4} H$ and $\D(u) \equiv 1$.}
	\label{fig:solutions_cup_cut}
\end{figure}
\begin{figure}[htbp]
	\includegraphics[width=\textwidth,clip,trim={82 90 45 80}] {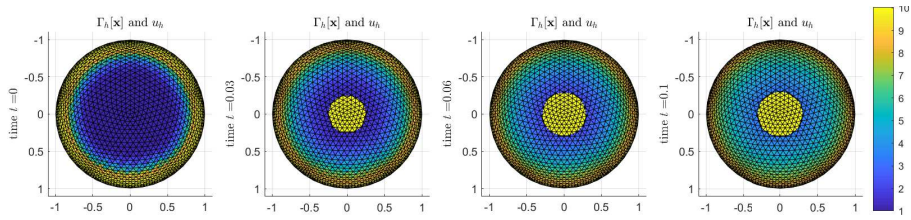}
	\caption{Snapshots of the surface $\Ga_h[\bfx]$ and the concentration $u_h$ (bottom view of the $x$--$y$-projection) of a self-intersecting evolution \eqref{eq:MCFdiff} with $F(u,H) = g(u) H = 5u^{-4} H$ and $\D(u) \equiv 1$.}
	\label{fig:solutions_cup_bottom}
\end{figure}
\begin{figure}[htbp]
	\includegraphics[width=\textwidth] {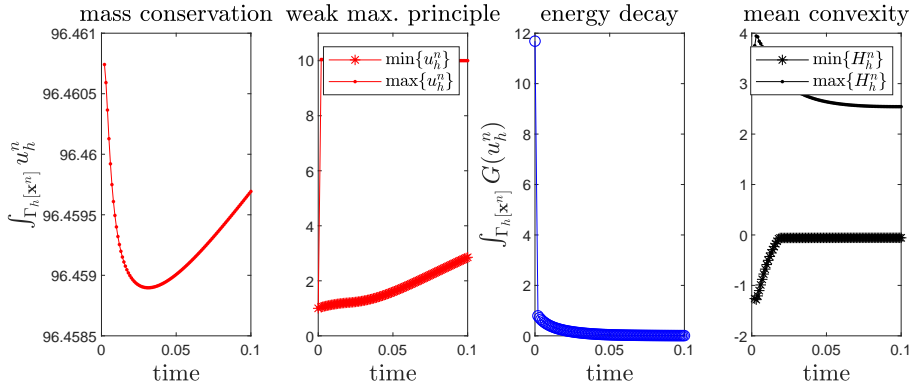}
	\caption{Mass conservation, weak maximum principle and energy decay for the self-intersecting example in Figure~\ref{fig:solutions_cup_cut}--\ref{fig:solutions_cup_bottom} along the flow \eqref{eq:MCFdiff} with $g(u) = 5u^{-4}$.}
	\label{fig:conservation_cup}
\end{figure}

\section*{Acknowledgments}

We thank Stefan Schmidt for his Blender magic regarding the initial cup surface.
The work of Harald Garcke and Bal\'azs Kov\'acs is supported by the DFG Graduiertenkolleg 2339 \emph{IntComSin} -- Project-ID 321821685.
The work of Bal\'azs Kov\'acs is funded by the Heisenberg Programme of the Deutsche Forschungsgemeinschaft (DFG, German Research Foundation) -- Project-ID 446431602.

\bibliographystyle{alpha}
\bibliography{evolving_surface_literature}

\begin{thebibliography}{ACYZ21}

\bibitem[ABG22]{ABG_MCFdiff}
H.~Abels, F.~B{\"u}rger, and H.~Garcke.
\newblock Qualitative properties for a system coupling scaled mean curvature
  flow and diffusion.
\newblock {\em arXiv:2205.02493}, 2022.

\bibitem[ACYZ21]{Akrivisetal_BDF6}
G.~Akrivis, M.~Chen, F.~Yu, and Z.~Zhou.
\newblock The energy technique for the six-step {BDF} method.
\newblock {\em SIAM J. Math. Anal.}, 59, 2021.

\bibitem[AES15]{AlphonseElliottStinner}
{A.} Alphonse, C.~M. Elliott, and {B.} Stinner.
\newblock An abstract framework for parabolic {PDE}s on evolving spaces.
\newblock {\em Portugaliae Mathematica}, 72(1):1--46, 2015.

\bibitem[AL15]{AkrivisLubich_quasilinBDF}
G.~Akrivis and C.~Lubich.
\newblock Fully implicit, linearly implicit and implicit--explicit backward
  difference formulae for quasi-linear parabolic equations.
\newblock {\em Numer. Math.}, 131(4):713--735, 2015.

\bibitem[ALL17]{AkrivisLiLubich_quasilinBDF}
G.~Akrivis, B.~Li, and C.~Lubich.
\newblock Combining maximal regularity and energy estimates for time
  discretizations of quasilinear parabolic equations.
\newblock {\em Math. Comp.}, 86(306):1527--1552, 2017.

\bibitem[BCH06]{BCH2006}
S.~Bartels, C.~Carstensen, and A.~Hecht.
\newblock {$\rm P2Q2Iso2D=2D$} isoparametric {FEM} in {M}atlab.
\newblock {\em J. Comput. Appl. Math.}, 192(2):219--250, 2006.

\bibitem[BDS17]{BDS}
J.~W. Barrett, K.~Deckelnick, and V.~Styles.
\newblock Numerical analysis for a system coupling curve evolution to reaction
  diffusion on the curve.
\newblock {\em SIAM J. Numer. Anal.}, 55(2):1080--1100, 2017.

\bibitem[BEM11]{BarreiraElliottMadzvamuse2011}
R.~Barreira, C.~M. Elliott, and A.~Madzvamuse.
\newblock The surface finite element method for pattern formation on evolving
  biological surfaces.
\newblock {\em J. Math. Biol.}, 63(6):1095--1119, 2011.

\bibitem[BGN08]{BGN2008}
J.~W. Barrett, H.~Garcke, and R.~N{\"u}rnberg.
\newblock On the parametric finite element approximation of evolving
  hypersurfaces in {$\R^3$}.
\newblock {\em J. Comput. Phys.}, 227(9):4281--4307, 2008.

\bibitem[BGN20]{BGN_survey}
J.~W. Barrett, H.~Garcke, and R.~N\"urnberg.
\newblock Parametric finite element approximations of curvature driven
  interface evolutions.
\newblock In Andrea Bonito and Ricardo~H. Nochetto, editors, {\em Handb. Numer.
  Anal.}, volume~21, pages 275--423. Elsevier, Amsterdam, 2020.

\bibitem[BK21a]{MCF_generalised}
T.~Binz and B.~Kov{\'a}cs.
\newblock A convergent finite element algorithm for generalized mean curvature
  flows of closed surfaces.
\newblock {\em IMA J. Numer. Anal.}, 2021.
\newblock doi:10.1093/imanum/drab043.

\bibitem[BK21b]{MCF_codim}
T.~Binz and B.~Kov{\'a}cs.
\newblock A convergent finite element algorithm for mean curvature flow in
  higher codimension.
\newblock {\em arXiv:{2107.10577}}, 2021.

\bibitem[BS08]{BrennerScott}
S.C. Brenner and R.~Scott.
\newblock {\em The mathematical theory of finite element methods}, volume Texts
  in Applied Mathematics, 15.
\newblock Springer, New York, 2008.

\bibitem[B{\"u}r21]{diss_Buerger}
F.~B{\"u}rger.
\newblock {\em Interaction of mean curvature flow and diffusion}.
\newblock {PhD} thesis, University of Regensburg, Regensburg, Germany, 2021.
\newblock \doi{10.5283/epub.51215}.

\bibitem[CGG01]{CGG}
M.~A.~J. Chaplain, M.~Ganesh, and I.~G. Graham.
\newblock Spatio-temporal pattern formation on spherical surfaces: numerical
  simulation and application to solid tumour growth.
\newblock {\em J. Math. Biol.}, 42(5):387--423, 2001.

\bibitem[DDE05]{DeckelnickDE2005}
K.~Deckelnick, G.~Dziuk, and C.~M. Elliott.
\newblock Computation of geometric partial differential equations and mean
  curvature flow.
\newblock {\em Acta Numerica}, 14:139--232, 2005.

\bibitem[DE07]{DziukElliott_ESFEM}
G.~Dziuk and C.~M. Elliott.
\newblock Finite elements on evolving surfaces.
\newblock {\em IMA J. Numer. Anal.}, 27(2):262--292, 2007.

\bibitem[DE13a]{DziukElliott_acta}
G.~Dziuk and C.~M. Elliott.
\newblock Finite element methods for surface {PDE}s.
\newblock {\em Acta Numerica}, 22:289--396, 2013.

\bibitem[DE13b]{DziukElliott_L2}
G.~Dziuk and C.~M. Elliott.
\newblock {$L^2$}--estimates for the evolving surface finite element method.
\newblock {\em Math. Comp.}, 82(281):1--24, 2013.

\bibitem[Dem09]{Demlow2009}
A.~Demlow.
\newblock Higher--order finite element methods and pointwise error estimates
  for elliptic problems on surfaces.
\newblock {\em SIAM J. Numer. Anal.}, 47(2):805--807, 2009.

\bibitem[DKM13]{DziukKronerMuller}
G.~Dziuk, D.~Kr\"{o}ner, and T.~M\"{u}ller.
\newblock Scalar conservation laws on moving hypersurfaces.
\newblock {\em Interfaces Free Bound.}, 15(2):203--236, 2013.

\bibitem[DLM12]{DziukLubichMansour_rksurf}
G.~Dziuk, C.~Lubich, and D.~E. Mansour.
\newblock {R}unge--{K}utta time discretization of parabolic differential
  equations on evolving surfaces.
\newblock {\em IMA J. Numer. Anal.}, 32(2):394--416, 2012.

\bibitem[DS22]{DeckelnickStyles2021}
K.~Deckelnick and V.~Styles.
\newblock Finite element error analysis for a system coupling surface evolution
  to diffusion on the surface.
\newblock {\em Interfaces Free Bound.}, 24:63--93, 2022.

\bibitem[Dzi88]{Dziuk88}
G.~Dziuk.
\newblock Finite elements for the {B}eltrami operator on arbitrary surfaces.
\newblock {\em Partial differential equations and calculus of variations,
  Lecture Notes in Math., 1357, Springer, Berlin}, pages 142--155, 1988.

\bibitem[Dzi90]{Dziuk90}
G.~Dziuk.
\newblock An algorithm for evolutionary surfaces.
\newblock {\em Numer. Math.}, 58(1):603--611, 1990.

\bibitem[Eck04]{Ecker2012}
K.~Ecker.
\newblock {\em Regularity theory for mean curvature flow}, volume~57 of {\em
  Progress in Nonlinear Differential Equations and their Applications}.
\newblock Birkh\"{a}user Boston, Boston, MA, 2004.

\bibitem[Eck08]{Ecker_DMV}
K.~Ecker.
\newblock Heat equations in geometry and topology.
\newblock {\em Jahresber. Deutsch. Math.-Verein.}, 110(3):117--141, 2008.

\bibitem[EF17]{ElliottFritz_DT}
C.~M. Elliott and H.~Fritz.
\newblock On approximations of the curve shortening flow and of the mean
  curvature flow based on the {DeTurck} trick.
\newblock {\em IMA J. Numer. Anal.}, 37(2):543--603, 2017.

\bibitem[EKS19]{EylesKingStyles2019}
J.~Eyles, J.~R. King, and V.~Styles.
\newblock A tractable mathematical model for tissue growth.
\newblock {\em Interfaces Free Bound.}, 21(4):463--493, 2019.

\bibitem[ER21]{EllRan21}
C.~M. Elliott and T.~Ranner.
\newblock A unified theory for continuous-in-time evolving finite element space
  approximations to partial differential equations in evolving domains.
\newblock {\em IMA Journal of Numerical Analysis}, 41:1696--1845, 2021.

\bibitem[ES12]{ElliottStyles_ALEnumerics}
C.~M. Elliott and V.~Styles.
\newblock An {ALE} {ESFEM} for solving {PDE}s on evolving surfaces.
\newblock {\em Milan Journal of Mathematics}, 80(2):469--501, 2012.

\bibitem[HP99]{HuiskenPolden}
G.~Huisken and A.~Polden.
\newblock Geometric evolution equations for hypersurfaces.
\newblock In {\em Calculus of variations and geometric evolution problems
  ({C}etraro, 1996)}, volume 1713 of {\em Lecture Notes in Math.}, pages
  45--84. Springer, Berlin, 1999.

\bibitem[Hui84]{Huisken1984}
G.~Huisken.
\newblock Flow by mean curvature of convex surfaces into spheres.
\newblock {\em J. Differential Geometry}, 20(1):237--266, 1984.

\bibitem[HW96]{HairerWannerII}
E.~Hairer and G.~Wanner.
\newblock {\em Solving Ordinary Differential Equations II. Stiff and
  Differential--Algebraic Problems}.
\newblock Springer, Berlin, {S}econd edition, 1996.

\bibitem[KL21]{surface_maxreg}
B.~Kov{\'a}cs and B.~Li.
\newblock Maximal regularity of backward difference time discretization for
  evolving surface {PDE}s and its application to nonlinear problems.
\newblock {\em arXiv:{2106.07951}}, 2021.

\bibitem[KLL19]{MCF}
B.~Kov{\'a}cs, B.~Li, and C.~Lubich.
\newblock A convergent evolving finite element algorithm for mean curvature
  flow of closed surfaces.
\newblock {\em Numer. Math.}, 143:797--853, 2019.

\bibitem[KLL20]{MCF_soldriven}
B.~Kov{\'a}cs, B.~Li, and C.~Lubich.
\newblock A convergent algorithm for forced mean curvature flow driven by
  diffusion on the surfaces.
\newblock {\em Interfaces Free Bound.}, 22(4):443--464, 2020.

\bibitem[KLL21]{Willmore}
B.~Kov{\'a}cs, B.~Li, and C.~Lubich.
\newblock A convergent evolving finite element algorithm for {W}illmore flow of
  closed surfaces.
\newblock {\em Numerische Mathematik}, 149(4):595--643, 2021.

\bibitem[KLLP17]{KLLP2017}
B.~Kov\'{a}cs, B.~Li, C.~Lubich, and {C.A.} {Power Guerra}.
\newblock Convergence of finite elements on an evolving surface driven by
  diffusion on the surface.
\newblock {\em Numer. Math.}, 137(3):643--689, 2017.

\bibitem[Kov18]{highorderESFEM}
B.~Kov\'{a}cs.
\newblock High-order evolving surface finite element method for parabolic
  problems on evolving surfaces.
\newblock {\em IMA J. Numer. Anal.}, 38(1):430--459, 2018.

\bibitem[KP16]{KPower_quasilinear}
B.~Kov\'{a}cs and {C.A.} {Power Guerra}.
\newblock Error analysis for full discretizations of quasilinear parabolic
  problems on evolving surfaces.
\newblock {\em NMPDE}, 32(4):1200--1231, 2016.

\bibitem[Li21]{MCF_Dziuk_Li}
B.~Li.
\newblock Convergence of {D}ziuk's semidiscrete finite element method for mean
  curvature flow of closed surfaces with high-order finite elements.
\newblock {\em SIAM J. Numer. Anal.}, 59(3):1592--1617, 2021.

\bibitem[LMV13]{LubichMansourVenkataraman_bdsurf}
C.~Lubich, D.E. Mansour, and C.~Venkataraman.
\newblock Backward difference time discretization of parabolic differential
  equations on evolving surfaces.
\newblock {\em IMA J. Numer. Anal.}, 33(4):1365--1385, 2013.

\bibitem[NO81]{NevanlinnaOdeh}
O.~Nevanlinna and F.~Odeh.
\newblock Multiplier techniques for linear multistep methods.
\newblock {\em Numerical Functional Analysis and Optimization}, 3(4):377--423,
  1981.

\bibitem[PS04]{distmesh}
P-O. Persson and G.~Strang.
\newblock A simple mesh generator in {MATLAB}.
\newblock {\em SIAM Review}, 46(2):329--345, 2004.

\bibitem[PS17]{PozziStinner_curve}
P.~Pozzi and B.~Stinner.
\newblock Curve shortening flow coupled to lateral diffusion.
\newblock {\em Numer. Math.}, 135(4):1171--1205, 2017.

\bibitem[PS19]{PozziStinner_elastic_curve}
P.~Pozzi and B.~Stinner.
\newblock Elastic flow interacting with a lateral diffusion process: the
  one-dimensional graph case.
\newblock {\em IMA J. Numer. Anal.}, 39(1):201--234, 2019.

\bibitem[SVY21]{StylesVanYperen2020}
V.~Styles and J.~Van~Yperen.
\newblock Numerical analysis for a system coupling curve evolution attached
  orthogonally to a fixed boundary, to a reaction-diffusion equation on the
  curve.
\newblock {\em Num. Meth. PDE}, 2021.

\bibitem[Wal15]{Walker2015}
S.~W. Walker.
\newblock {\em The shape of things: a practical guide to differential geometry
  and the shape derivative}.
\newblock SIAM, Philadelphia, 2015.

\end{thebibliography}

\end{document}